\newcommand{\dl}{\lambda}
\newcommand{\C}{\mathbb{C}}
\newcommand{\N}{\mathbb{N}}
\newcommand{\Z}{\mathbb{Z}}
\newcommand{\R}{\mathbb{R}}
\newcommand{\tX}{\widetilde{X}}
\newcommand{\tS}{\widetilde{S}}
\newcommand{\fol}{\mathcal{F}}
\newcommand{\tilf}{\widetilde{\mathcal{F}}}
\newcommand{\partx}{{\partial /\partial x}}
\newcommand{\party}{{\partial /\partial y}}
\newcommand{\partz}{{\partial /\partial z}}
\newcommand{\bu}{{\bar{u}}}
\newcommand{\bv}{{\bar{v}}}
\newcommand{\bw}{{\bar{w}}}
\def\picill#1by#2(#3)#4
\vfill\special{illustration #3 scaled #4}}}
\newtheorem{theorem}{Theorem}
\newtheorem{prop}{Proposition}
\newtheorem{corol}{Corollary}
\newtheorem{lemma}{Lemma}
\newtheorem{obs}{Remark}
\theoremstyle{definition}
\newtheorem{defi}{Definition}
\theoremstyle{remark}
\newtheorem{rem}{Remark}
\begin{document}

\title[On the resolution of foliations]{On resolution of 1-dimensional foliations on 3-manifolds}

\author{Julio C. Rebelo \, \, \, \& \, \, \, Helena Reis}
\address{}
\thanks{}

\begin{abstract}
This paper is devoted to the resolution of singularities of holomorphic vector fields and of one-dimensional holomorphic
foliations in dimension~$3$ and it has two main objectives. First, from the general perspective of one-dimensional foliations,
we build upon the work of Cano-Roche-Spivakovsky \cite{canorochetc} and essentially complete it.
As a consequence, we obtain a general resolution theorem comparable to the resolution theorem of
McQuillan-Panazzolo \cite{danielMcquillan} but proved by means of rather different methods.

The second objective of this paper consists of looking at a special class of singularities of foliations containing,
in particular, all singularities of complete holomorphic vector fields on complex manifolds of dimension~$3$. We then
prove that for this class of holomorphic foliations, there holds a much sharper resolution theorem.
This second result was the initial
motivation of this paper and it relies on the combination of the previous resolution theorems for (general) foliations
with some classical material on asymptotic expansions for solutions of differential equations.
\end{abstract}

\maketitle

\section{Introduction}

%----------------------------------------------------------------------------------------------------------
%----------------------------------------------------------------------------------------------------------
%----------------------------------------------------------------------------------------------------------
%----------------------------------------------------------------------------------------------------------

The purpose of this introduction is to state the main results obtained in the course of this paper along with the basic notions
needed to make their statements intelligible to the non-expert reader. A more detailed discussion about the place of our results
in the current state-of-art in the area as well as an outline of our methods and of the structure of the paper
will follow in Section~\ref{Reviewliterature}.

Recall first that a singular, one-dimensional holomorphic foliation $\fol$ on $(\C^3,0)$ is nothing but the
(singular) foliation defined by the local
orbits of a holomorphic vector field defined on a neighborhood of the origin and having zero-set of codimension at least~$2$.
Unless otherwise stated, throughout this paper the phrase {\it singular holomorphic
foliation} means a singular, {\it one-dimensional}\, holomorphic foliation.
A simple consequence of Hilbert nullstellensatz is that, up to multiplying vector fields by a meromorphic function,
every {\it meromorphic vector field $X$ on $(\C^3,0)$}\, induces a singular {\it holomorphic foliation}\, on a neighborhood of the origin.
This foliation will be called the foliation associated with $X$. Clearly two (meromorphic) vector fields have the same associated foliation
if and only if they differ by a multiplicative (meromorphic) function.

Conversely, a vector field $X$ inducing a given foliation $\fol$
will be called a {\it representative of $\fol$}\, if $X$ is {\it holomorphic} and {\it the set of zeros of $X$}\,
has codimension at least two. In other words,
a representative vector field of $\fol$ is any holomorphic vector field tangent to $\fol$ and having a zero-set of codimension
at least~$2$.

There follows from the preceding that there is no point in considering ``singular meromorphic foliations'' since all
foliations in this category would, in fact, {\it be holomorphic}. Similarly, (singular) holomorphic
foliations have {\it empty}\, zero-divisor since their singular sets have
codimension at least~$2$. In other words, whenever we are exclusively concerned with foliations, we can freely eliminate
any (meromorphic) common factor between the components of a vector field tangent to the foliation to obtain a representative
vector field. Naturally this cannot be done
if we are focusing on an actual fixed vector field $X$ as it so often happens (more on this below).

In the above mentioned context of singular points, {\it resolution theorems} - also known as {\it desingularization theorems} -
are geared towards {\it foliations} in that we are ``free'' to eliminate non-trivial
common factors between components of a vector field whenever these common factors arise from transforming
a representative vector field by a birational map.
To further clarify these issues, we may recall that the
prototype of all ``resolution theorems'' for foliations is provided by
Seidenberg's theorem \cite{seiden} which is valid for foliations defined on a two-dimensional ambient. More precisely, if
$\fol$ denotes a singular holomorphic foliation defined
on a neighborhood of $(0,0) \in \C^2$, then Seidenberg's theorem asserts the existence of a finite sequence of one-point
blow-up maps, along with transformed foliations $\fol_i$ ($i=1, \ldots, n$)
$$
\fol=\fol_0 \stackrel{\Pi_1}\longleftarrow \fol_1 \stackrel{\Pi_2}\longleftarrow \cdots
\stackrel{\Pi_l}\longleftarrow \fol_n
$$
such that the following holds:
\begin{itemize}
  \item Each blow-up map $\Pi_i$ ($i=1, \ldots, n$) is centered at a singular point of $\fol_{i-1}$.

  \item All singular points of $\fol_n$ are {\it elementary}, i.e. $\fol_n$ is locally given by a representative
  vector field $X_n$ whose linear part at the singular point in question has at least one eigenvalue different from zero (cf. below).
\end{itemize}

Whereas Seidenberg's theorem is directly concerned with foliations, it is also very effective when applied to vector fields defined
on complex surfaces.
The general principle to use Seidenberg theorem to study vector fields - as opposed to foliations - consists of
applying Seidenberg theorem to the associated foliation while also keeping track of the divisor of zeros/poles of the
transformed vector field. In line with this point of view, Seidenberg's theorem is equally satisfying: the structure of the resolution map
(the composition of the blow-ups $\Pi_i$) is such that the transform of holomorphic vector fields retains its holomorphic
character (here the reader is reminded that the transform of a holomorphic vector field by a birational map is, in general,
a meromorphic vector field). More generally, Seidenberg's procedure allows for an immediate computation of
the zero-divisor of the transformed vector field. For example, if we blow-up a vector field $X$ having an isolated
singularity at $(0,0) \in \C^2$ and denote by $k$ the degree of the first non-zero homogeneous
component of the Taylor series of $X$ at $(0,0)$, then the zero-divisor of the blow-up of $X$ coincides with
the exceptional divisor and has multiplicity $k-1$ (unless $X$ is actually a multiple of the radial vector field
$x\partial /\partial x + y \partial /\partial y$ in which case the multiplicity is~$k$).

As it will be seen in the course of the discussion, the generalization of Seidenberg's theorem to foliations on $(\C^3,0)$ is a
very subtle problem. A very satisfactory answer is provided in \cite{danielMcquillan}, \cite{danielMcquillanpreprint} and it
relies heavily on a previous result by Panazzolo in \cite{P}. Slightly later, the topic was revisited from the point
of view of valuations in \cite{canorochetc}. The ``final models'' in the resolution theorem proved in \cite{canorochetc}
are, however, not as
accurate as those in \cite{danielMcquillan} (see Section~\ref{Reviewliterature} for further details).
The present paper grew out of an attempt to use the mentioned results to obtain a sharper resolution
result which {\it would hold for the special class of holomorphic foliations}\, which is associated with semicomplete vector fields
(cf. Theorem~B below). Whereas the class of foliations associated with semicomplete vector fields is rather special, it contains
the underlying foliations of all complete vector fields as well as many foliations arising in the context of Mathematical Physics
and the importance of these examples justifies the interest in a sharper (or ``simpler'') resolution statement valid only
for this class of foliations (see Section~\ref{Reviewliterature}).

However, from the point of view mentioned above, it turned out that the resolution theorem in \cite{canorochetc}
was not really suited to our needs because the corresponding ``final models'' were not accurate enough. As to the resolution
theorem in \cite{danielMcquillan}, we were unsure of the behavior of vector fields - as opposed to foliations - under
their procedure. Basically, we did not know if Panazzolo's algorithm in \cite{P} had one specific property and we raised
the issue in a preliminary version of this paper: we are grateful to the referee for having confirmed that the
algorithm in \cite{P} does have the required property. These issues will be detailed in Section~\ref{Reviewliterature}.
In any event, when studying the papers in question, we felt it would be nice to try and complete the work
of Cano-Roche-Spivakovsky \cite{canorochetc}
by deriving ``final models'' similar to those of \cite{danielMcquillan} (cf. Theorem~A below).

To state the mentioned resolution theorems, let us first recall some standard terminology.
Let $\fol$ be a singular holomorphic foliation defined
on a neighborhood of the origin in $\C^3$ (or more generally in $\C^n$). The {\it eigenvalues} of $\fol$ at the origin are defined
as the eigenvalues of the linear part at the origin of a representative vector field $X$ for $\fol$. Since two representative
vector fields for a same foliation $\fol$ must differ by an invertible multiplicative holomorphic function, it follows
that the eigenvalues of $\fol$ are well defined only up to a multiplicative constant.

Next, a singular point $p$ for $\fol$ is said to be {\it elementary}\, if $\fol$ has at least one eigenvalue different
from zero at~$p$. Similarly, we will say that
$\fol$ has a {\it nilpotent singularity} at $p$ if the linear part at the origin of a (local) representative vector field of $\fol$
is {\it nilpotent but non-zero}.
Finally, if the linear part of a representative vector field at the origin is equal to zero,
then $p$ is said to be a {\it degenerate}\, singular point of $\fol$.

We are now able to state Theorem~A. Whereas this theorem is in most respects equivalent to
the main result in \cite{danielMcquillan}, the corresponding proofs are very different.

\vspace{0.2cm}

\noindent {\bf Theorem A}. {\sl Let $\fol$ denote a (one-dimensional) singular holomorphic foliation defined on a neighborhood
of $(0,0,0) \in \C^3$. Then there exists a finite sequence of blow-up maps along with transformed foliations
\begin{equation}
\fol=\fol_0 \stackrel{\Pi_1}\longleftarrow \fol_1 \stackrel{\Pi_2}\longleftarrow \cdots
\stackrel{\Pi_l}\longleftarrow \fol_n \label{blowup-resolution_globaldesingularization}
\end{equation}
satisfying all of the following conditions:
\begin{itemize}
  \item[(1)] The center of the blow-up map $\Pi_i$ is (smooth and) contained in the singular set of $\fol_{i-1}$, $i=1, \ldots , n$.

  \item[(2)] The singularities of $\fol_n$ are either elementary or persistently nilpotent.

  \item[(3)] The number of persistently nilpotent singularities of $\fol_n$ is finite and each of them can be turned into
  elementary singular points by performing a single weighted blow-up of weight~$2$.
\end{itemize}
}

As it is implicit in the above statement, persistent nilpotent singular points is a special type of nilpotent singular point
which will be detailed described in Section~\ref{persistentnilpotent-Section} (cf. Theorem~\ref{normalformpersistentnilpotent}).
As it will be seen, they also play a special role in the resolution theorem of \cite{danielMcquillan}. Namely, they appear
as singularities associated with a special type of $\Z /2\Z$-orbifold which, incidentally, require a weight~$2$ blow-up
to be turned into elementary ones. It is also worth pointing out that
both statements are sharp in the sense that well known examples by Sancho and Sanz show the use of a weight~$2$ blow-up
cannot be avoided (cf. Sections~\ref{Reviewliterature} and~\ref{persistentnilpotent-Section}).

In particular, both Theorem~A and the resolution theorem, Theorem~\ref{danielJDG}, in \cite{danielMcquillan} asserts
the existence of a birational
model for $\fol$ where all singularities of $\fol$ are elementary except for finitely many ones that can be turned
into elementary singular points by means of a single blow-up of weight~$2$. In this sense, differences between these two theorems
are down to the way in which these rational models are constructed. Alternatively, Theorem~A can simply be regarded as a new proof
of the resolution theorem in \cite{danielMcquillan}.

In terms of the construction of the mentioned rational models, we briefly mention that McQuillan and Panazzolo work
in the category of {\it weighted blow up}, along with the corresponding orbifolds, while in Theorem~A we restrict ourselves
as much as possible to the use of standard (i.e. unramified) blow-ups. Once again, additional information on these strategies
can be found in Section~\ref{Reviewliterature}.

At this point, it is convenient to introduce some terminology.
Throughout this paper the term {\it blow-up}\, will
refer to {\it standard (i.e. homogeneous) blow-ups}. This applies, in particular, to the statement of Theorem~A. As to blow-ups
with weights (i.e. non-homogeneous or ramified blow-ups) which are inevitably also involved in the discussion,
{\it these will be explicitly referred to as weighted (or ramified) blow-ups}.

Also, we will say that a (germ of) foliation $\fol$ can be
{\it resolved}\, if there is a sequence of blowing-ups as in~(\ref{blowup-resolution_globaldesingularization}) leading to
a foliation $\fol_n$ all of whose singularities are elementary. Similarly, a sequence of blowing-ups
as in~(\ref{blowup-resolution_globaldesingularization}) will be called a {\it resolution of $\fol$}\, if all the singular points
of $\fol_n$ are elementary. Whenever sequences of {\it weighted blow-ups}\, leading to a foliation having
only elementary singular points are considered, they may be referred to as a {\it weighted resolution of $\fol$}. With
this terminology, while
every germ of foliation on $(\C^3,0)$ admits a weighted resolution, as follows from \cite{danielMcquillan}
or Theorem~A, the mentioned examples of Sancho and Sanz show that not all of them admit a resolution. The reader
is referred to Section~\ref{Reviewliterature} for a detailed discussion on
the mutual interactions involving \cite{canorochetc}, \cite{danielMcquillan}, and our discussion revolving around Theorem~A.

We can now go back towards our initial motivation, namely to germs of foliations $\fol$ on $(\C^3,0)$ that are
associated with a semicomplete vector field. Since
the notion of semicomplete singularity was introduced along with its first applications to the (global) study of complex
vector fields (\cite{JR1}), it has been natural to ask whether all foliations in this class admit a resolution.
A special instance of this problem which is of interest in the
study of complex Lie group actions consists of asking whether the underlying foliation of a complete holomorphic
vector field (on some complex manifold of dimension~$3$) can be transformed into a foliation all of whose singular
points are elementary by means of a sequence of blow-ups as in~(\ref{blowup-resolution_globaldesingularization}).

To state our results concerning this special class of foliations, let us place ourselves once and for all
in the context of semicomplete vector fields. First, it is convenient to recall
that a singularity of a holomorphic vector field $X$ is said to be {\it semicomplete}\, if the integral
curves of $X$ admit a maximal domain of definition in $\C$, cf. \cite{JR1}. In particular, whenever $X$ is a {\it complete
vector field}\, defined on a complex manifold $M$, every singularity of $X$ is automatically semicomplete.
The answer to the above question is then provided by the following theorem:

\vspace{0.2cm}

\noindent {\bf Theorem B}. {\sl Let $X$ be a semicomplete vector field defined on a neighborhood of the origin
in $\C^3$ and denote by $\fol$ the holomorphic foliation associated with $X$. Then one of the following holds:
\begin{enumerate}
  \item The linear part of $X$ at the origin is nilpotent (non-zero).

  \item There exists a finite sequence of blow-ups maps along with transformed foliations
$$
\fol = \fol_0 \stackrel{\Pi_1}\longleftarrow \fol_1 \stackrel{\Pi_2}\longleftarrow \cdots
\stackrel{\Pi_r}\longleftarrow \fol_r
$$
such that all of the singular points of $\fol_r$ are elementary. Moreover, each blow-up map $\Pi_i$ is centered in the
singular set of the corresponding foliation $\fol_{i-1}$. In other words, the foliation $\fol$ can be resolved.

\end{enumerate}
}

\vspace{0.1cm}

Let us emphasize that item~1 in Theorem~B means that the linear part of $X$ is (nilpotent) {\it non-zero from the outset}.
In other words, if the foliation $\fol$ associated with $X$ cannot be resolved, then $X$ has a non-zero nilpotent linear part
and this property is ``universal'' in the sense that it does not depend on any sequence of blow-ups/blow-downs carried out.
In particular, we can choose a ``minimal model'' for our manifold and the corresponding transform of $X$ will still have
non-zero nilpotent linear part at the corresponding point.
Moreover, from Theorem~\ref{normalformpersistentnilpotent} about ``persistent nilpotent singularities'', it is easy
to obtain accurate normal forms for the vector field $X$ (cf. Definition~\ref{definitionpersistentnilpotent}).

Also, the statement of Theorem~B involves the linear part of the vector field $X$ rather than
the linear part of the associated foliation $\fol$. This makes for a stronger statement which is better
emphasized by Corollary~C below:

\vspace{0.1cm}

\noindent {\bf Corollary C}. {\sl Let $X$ be a semicomplete vector field defined on a neighborhood of $(0,0,0) \in \C^3$
and assume that the linear part of $X$ at the origin is equal to zero. Then item~(2) of Theorem~B holds.}

\vspace{0.1cm}

More precisely,
Theorem~B asserts that foliations associated with semicomplete vector fields in dimension~$3$ can be resolved
by a sequence of blow-ups centered in the singular set
except for a very specific case in which the vector field $X$ (and hence the foliation $\fol$) has a ``universal''
non-zero nilpotent linear part. As mentioned, these statements have the advantage of involving the vector field and not only the
underlying foliation. To clarify the meaning of this sentence, consider a holomorphic (semicomplete) vector field $X$
having the form $X = fY$, where $Y$ is another holomorphic vector field and $f$ is a holomorphic function. Whereas $X$ and
$Y$ induce the same singular foliation $\fol$, an immediate consequence
of Corollary~C is that $\fol$ must be as in item~(2) of Theorem~B {\it provided that $f$ vanishes at the origin}\,:
in fact, if $f$ and $Y$ are as indicated, then the linear part of $X$ vanishes at the origin at that $\fol$ is, indeed, singular
(clearly there is nothing to be proved if $\fol$ is regular). In other words, if $X = fY$ as above with $f(0,0,0) =0$ and
$X$ semicomplete, then the foliation associated with $X$ can certainly be resolved even if $Y$ has a nilpotent
singular point at the origin.

A few additional comments are needed to fully clarify the role of item~(1) in Theorem~B.
First note that more accurate normal forms are available for the vector fields in question: indeed,
Theorem~\ref{normalformpersistentnilpotent} provides accurate normal forms for all persistent nilpotent singular
points. In addition, {\it not all}\, nilpotent vector fields giving rise to
persistent nilpotent singularities are semicomplete
and, in this respect, the normal form provided by Theorem~\ref{normalformpersistentnilpotent} will further be refined later
on (see Section~\ref{Examples_and_comments}).

Next, taking into account the global setting of complete vector fields,
it is natural to wonder if there is, indeed, {\it complete
vector fields} inducing a foliation with singular points that cannot be resolved. As a consequence of Theorem~B,
such vector fields would definitely be pretty remarkable
since they must have a (non-zero) ``universal'' nilpotent singular point. To confirm that these global
situations do exist, however,  it suffices to note that the polynomial vector field
$$
Z = x^2 \partial /\partial x + xz \partial /\partial y + (y -xz) \partial /\partial z
$$
can be extended to a complete vector field defined on a suitable open manifold (see Section~\ref{Examples_and_comments} for detail).
As will be seen, the origin in the above coordinates
constitutes a nilpotent singular point of $Z$ that cannot be resolved by means of blow-ups as in item~(2) of Theorem~B,
albeit this
nilpotent singularity can be resolved by using a blow-up centered at the (invariant) $x$-axis.

Finally, the question raised above about the existence of singularities as in item~(1) of Theorem~B in global
settings can also be asked in the far more restrictive case of holomorphic vector
field defined on {\it compact manifolds}\,
of dimension~$3$. Owing to the compactness of the manifold, every such vector field is automatically complete. In this
setting, the methods used in the proof of Theorem~B easily yield:

\vspace{0.2cm}

\noindent {\bf Corollary D}. {\sl Let $\fol$ be the foliation associated with a vector field $X$ globally defined
on some compact manifold $M$ of dimension~$3$. Then every singular point of $\fol$ can be resolved.}

\vspace{0.1cm}

Let us close this introduction with a couple of remarks inspired by some questions asked to us by A. Glutsyuk.
Essentially his questions concern resolution strategies with minimal number of (weighted) blow ups which can also
be seen as an analogue of some questions previously considered in the context of Hironaka's theorem. In this respect,
it is clear that being able to work with weighted blow ups, as opposed to standard ones, increases the chances of
reducing the number of blow ups to resolve a given foliation. Indeed, it is easy to produce examples of this phenomenon
already in dimension~$2$
and in the context of Seidenberg's theorem. Hence, there is no chance that the strategy used in the proof
of Theorem~A will in general minimize the number of blow ups required to resolve a given foliation. However,
we ignore if Panazzolo's algorithm \cite{P}
has minimizing properties in the preceding sense.

A similar question directly motivated by the fact that in dimension~$2$ standard blow ups suffice to resolve
any foliation, consists of trying to minimize the number of weighted
blow ups needed to obtain the resolution. In this case, and at least for generic foliations, Theorem~A seems to provide
a satisfactory answer. Let us try to sketch an argument in this direction.
As it follows from Theorem~\ref{normalformpersistentnilpotent}, persistent nilpotent singular
points are naturally associated with certain formal separatrices (i.e. formal invariant curves) having some special properties.
Their ``position'' in the exceptional divisor obtained after finitely many blow ups
is thus determined by the corresponding formal separatrices. In particular, it is possible to talk about these singularities being in
``general position'' for a given germ in an intrinsic way, i.e. independently of the use of any sequence of (standard)
blow ups. At least when these singularities are in ``general position'' for a foliation $\fol$,
then Theorem~A should minimize the number of
weighted blow ups needed to turn $\fol$ into a foliation all of whose singular points are elementary. Indeed, each such singularity
requires at least one weighted blow up to be turned into elementary singular points and each such blow up can non-trivially
affect only one of these singularities thanks to the ``general position'' assumption. Thus the number of weighted blow ups
needed cannot be smaller than the number of persistent nilpotent singularities and the later is matched by the procedure in Theorem~A.
We ignore, however, if the ``general position assumption'' is really needed for this statement. Note that if there is
a foliation $\fol$ that can be resolved by using less weighted blow ups than those prescribed in Theorem~A, then $\fol$ should
conceal at least two persistent nilpotent singularities so ``close'' to each other that they can both be turned into elementary
singular points by means of a same weighted blow up.

Finally, we point out that a comprehensive discussion of the interactions
between the above results and the results in \cite{canorochetc}, \cite{danielMcquillan} is provided in
the next section along with a brief description of our methods and of the structure of this paper.

\noindent {\bf Acknowledgments.}
The authors are very grateful to C. Roche for sharing with us his expertise in the field
at various moments during the past few years. We warmly thank F. Cano
who told us long ago that a singularity that cannot be resolved by standard blow-ups must possess
a ``persistent'' formal separatrix. Equally, we are indebted to
D. Panazzolo for explaining to us the structure of \cite{danielMcquillan} and to A. Glutsyuk for his
sharp questions and interest in this paper. Last but not least, we
thank the referee for bringing some additional references to our attention and especially for informing us that
Panazzolo's algorithm in \cite{P} is such that the centers of the weighted blow-ups are {\it strictly invariant
with respect to the quasi-homogeneous filtration}\, thus settling an issue raised in a preliminary version
of this paper.

The second author was partially supported by CMUP, which is funded by FCT with national(MCTES) and European structural
funds through the programs FEDER, under the partnership agreementPT2020 (UID/MAT/00144/2013 and UID/MAT/00144/2019).

\section{Brief review of the literature}\label{Reviewliterature}

As already pointed out, the prototype of every {\it resolution (or desingularization)}\, theorem for
(one-dimensional) holomorphic foliations is Seidenberg's theorem which holds for
foliations defined on complex surfaces. It is then natural to begin this section by taking a closer look at Seidenberg theorem
for foliations on $(\C^2,0)$, see \cite{seiden}, \cite{arnold}, or \cite{bookYakovenko}.
Let $\fol$ denote a singular holomorphic foliation defined on a neighborhood of
$(0,0) \in \C^2$. Since the foliation is defined on a complex two-dimensional manifold, all blow-ups are necessarily
centered at points. Seidenberg theorem is a simple algorithm to transform $\fol$ into a foliation all of whose singular points
are elementary. The procedure in Seidenberg's algorithm can be summarized as follows. Consider $\fol$ as above
and its (necessarily isolated) singular point at $(0,0) \in \C^2$.
If $(0,0)$ is an elementary singular point
for $\fol$, then there is nothing to be done and the procedure is trivial. Otherwise, we blow-up $\fol$ at $(0,0)$ to obtain a
new foliation $\tilf_{(1)}$ whose singular points are denoted by $p_{(1),1}, \ldots , p_{(1),k}$.
If all the singular points $p_{(1),1}, \ldots , p_{(1),k}$
are elementary for $\tilf_{(1)}$ then the procedure ends. Otherwise, we carry on by blowing up each non-elementary
singular point of $\tilf_{(1)}$. Denote by $\tilf_{(2)}$ the foliation arising from these blow ups and let
$p_{(2),1}, \ldots , p_{(2),k}$ be its singular points.
The procedure stops at $\tilf_{(2)}$, if all the singular points $p_{(2),1}, \ldots , p_{(2),k}$
are elementary. If not, the procedure
is inductively continued. Seidenberg's theorem claims that this procedure is finite. In other words, we eventually
obtain a foliation all of whose singular points are elementary.

The seminal paper \cite{mamo} by Mattei and Moussu was probably the first work to fully realize how much of an
effective tool Seidenberg's theorem is for the study of foliations on complex surfaces. Indeed, in the
course of their paper, a systematic method to study these singular points was introduced. This method then led
to several remarkable results (see, for example, \cite{camacho} and \cite{camachosadLN}). The success of these
and other works, somehow made ``popular'' the slightly abridged version of Seidenberg's theorem claiming that
for every foliation $\fol$ on $(\C^2,0)$, there exists a birational model where the transformed foliation has only
elementary singular points. We call this an abridged form of Seidenberg's theorem because it focus on the existence
of the birational model without keeping track of the way in which this model is constructed.

It turns out, however, that besides foliations, there are many problems where the object of primary interest is a
holomorphic vector field. Examples of these problems include:
\begin{itemize}

  \item[(a)] The classification of semicomplete (germs of) vector fields in dimension~$2$ (see for example \cite{ghys} or
  \cite{Adolfo} and its reference list; see also \cite{dlousskyetc} for a nice application of the classification).

  \item[(b)] The study of parabolic curves and/or Fatou petals for {\it diffeomorphisms}\, of $(\C^2,0)$ tangent to
  the identity, (see at various levels \cite{Hakim1}, \cite{Hakim2}, \cite{MarcoAbate}, \cite{Brochero_cano}).

\end{itemize}
It is an important feature of Seidenberg's theorem that it is just as effective when applied to vector fields, but
in doing so, it is convenient to take into account the full statement of this theorem, rather than its ``abridged version''.
Indeed, the transform of a holomorphic vector field by an arbitrary birational map is, in general, a
{\it meromorphic vector field}. This problem, however, can be avoided as explained below.

In fact, it is easy to check that - in full generality - the transform of a holomorphic vector field by a
(standard) blow-up remains holomorphic provided that {\it the center of the blow up is invariant by the vector field
in question}. In view of this remark, the method to apply Seidenberg's theorem to vector fields can be
summarized in two steps as follows. First apply this theorem to the foliation associated with the
vector field so as to turn this foliation into another foliation having only elementary singular points.
The corresponding vector field
$\widetilde{X}$ then takes on the local form $X = fY$, where $Y$ is a holomorphic vector field with linear part having at
least one eigenvalue different from zero at the singular point and where $f$ is {\it a holomorphic function}\, whose divisor
of zeros can easily be computed. In the mentioned (local) nature of $f$, it is encoded the accurate construction
provided by Seidenberg's theorem, namely: every blow-up used in his construction is centered at a singular
point of the underlying foliation. In particular, the center of every blow up is invariant by the vector field
in question. There follows that the transformed vector field retains its holomorphic character and, whereas in general
non-empty even if we start with a vector field having isolated singular points, the divisor zeros of the final
vector field can easily be computed.

Note also that many question revolving around items~(a) and~(b) above are actively being studied in dimension~$3$.
In this respect, it reasonable to expect that Theorems~B may play a role in facilitating investigations related to
item~(a). As to item~(b), the same can be expected either from McQuillan-Panazzolo resolution theorem in
\cite{danielMcquillan}, see Theorem~\ref{danielJDG}, or from our Theorem~A.

Next, a few additional words about the interest of semicomplete vector fields seem appropriate.
The reader is referred to Section~\ref{semicompletestuff} for the definition of semicomplete vector fields
along with some of their basic properties. For the time being, it is enough to point out the following:
\begin{itemize}
  \item The germ defined by a {\it complete} vector field
at any of its singular points automatically belongs to the class of semicomplete vector fields. Thus, Theorem~B covers all complete
vector fields (on a $3$-dimensional ambient) and, in particular, all holomorphic vector fields defined on $3$-dimensional
compact complex manifolds. Therefore Theorem~B is particularly suited to the study of complex Lie groups actions on complex manifolds.

  \item On a different direction, and in connection with Nevanlinna theory, there is significant activity in Complex Analysis revolving
around differential equations possessing meromorphic solutions (see for \cite{EMS-??} for further information and references).
It turns out that every equation, or system of equations, possessing meromorphic solutions is automatically semicomplete.

  \item There is a huge amount of literature in Mathematical Physics devoted to special equations (or
systems of equations) possessing the so-called {\it Painlev\'e property}, (see for example \cite{SomethingPainleve}).
The notion of semicomplete being a sort of ``relative'' of the notion of Painlev\'e property, many examples of
systems of equations that are naturally semicomplete can be found in the context of Mathematical Physics and/or
of Special equations. These include the Painlev\'e equations
P-I, P-II, P-IV as well as the ``modified'' P-III and P-V. Similarly many Chazy equations are naturally semicomplete
\cite{adolfoChazy} and the same remark applies to Garnier's systems. Also the most interesting examples of
Halphen vector fields in the sense of \cite{GuillotPSL2}, and in particular the Halphen vector field appearing
in \cite{monopoles} and the vector field associated with Ramanujan functions $P$, $Q$, and $R$ (the Eisenstein series of weight~$2$,
$4$, and $6$, cf. \cite{ramanujan}) are all semicomplete as well.

\end{itemize}

Having reviewed Seidenberg theorem in detail and also recalled the interest of semicomplete vector fields, we can now move
on to reviewing {\it resolution theorems}\, for singularities of foliations in dimension~$3$. The discussion will also
help us to place our Theorems~A and~B in proper perspective with respect to previous works.

\subsection{Quick chronological review of previous results}

At a very basic level, any attempt of generalizing
Seidenberg's theorem to dimension~$3$ involves deciding whether we are interested in foliations
of {\it dimension one}\, or of {\it codimension one (i.e. of dimension two)}. Whereas for codimension one foliations
in $3$-dimensional manifolds there is a decisive answer that can hardly be improved on (see \cite{cano}), the story
involving foliations of dimension one is longer and more elusive. Unless otherwise stated, throughout the sequel the
term ``foliation'' always means a singular holomorphic foliation of dimension one.

Resolution results for foliations on $(\C^3, 0)$ started with \cite{canothesis} where the author proves
his {\it formal local resolution theorem}. Namely, he shows the existence of a winning strategy for the (formal) Hironaka game.
Among other things, in this work it appears for the first time a phenomenon involving singularities
possessing a certain {\it formal separatrix}
(a formal curve invariant by the foliation) which posed some serious difficulty to be resolved by means of standard blow ups.
Here, we remind the reader that in what follows the term
{\it blow up}\, always means standard blow ups - as opposed to weighted blow ups.

Building on the work of Cano, \cite{canothesis}, Sancho and Sanz were able to work out the first examples of foliations
that cannot be resolved by means of blow-ups with invariant centers.
Their examples consisted of nilpotent singular points
and are briefly recalled in Section~\ref{persistentnilpotent-Section}, see \cite{P} and \cite{CanoRoche} for detailed accounts.

Naturally, the examples provided by Sancho and Sanz may also appear only after a few blow-ups so that they, indeed, yield
numerous examples of foliations that cannot
be resolved by standard blow-ups. As an additional simple example of those,
we may take the family of foliations
$\fol_{\alpha, \beta, \lambda}$ associated with the respective vector fields
\[
 x( y - \lambda x + (1 - \beta)xz) \frac{\partial}{\partial x} +
(y^2 - \lambda xy + xz^3 - (\alpha + \beta) xyz)  \frac{\partial}{\partial y} +
z(y - \lambda x - \beta xz) \frac{\partial}{\partial z} \, .
\]
All foliations in this family have a degenerate (in fact, quadratic) singularity at the origin. Thus none of them
belongs to the Sancho-Sanz family since the latter {\it have a non-zero linear part}. However, by blowing-up
the origin, the blown-up foliation $\tilf_{\alpha, \beta, \lambda}$ shows a (nilpotent) singularity in belonging to the
family of examples by Sancho and Sanz: this singularity lies at the origin of coordinates $(u,v,z)$ with $x=uz$ and $y=vz$.
Therefore foliations in the family $\fol_{\alpha, \beta, \lambda}$ cannot be resolved by means of standard blow ups.
It is a remarkable feature of our Theorem~B that {\it none of these constructions}\, gives rise to a singularity associated
with a {\it semicomplete vector field}. In particular, none of them is realized in the context of complete holomorphic vector fields
on manifolds of dimension~$3$ (not necessarily compact). Theorem~B therefore points out a genuinely new phenomenon
in the area.

In view of Sancho Sanz examples, the following general question has quickly become popular among experts: {\it is it true
that every foliation $\fol$ that cannot be resolved by blow ups is such that they can be transformed into a foliation
exhibiting a singular point of Sancho Sanz type}\,? Similarly, the question on whether or not these singularities appear
in the context of {\it complete vector fields}\, has not gone unnoticed to most experts.

Clearly the resolution theorem of McQuillan-Panazzolo in \cite{danielMcquillan} (cf. Theorem~\ref{danielJDG})
provides an answer to the first question
while our Theorem~B answers the second one. The answer to the second question provided by Theorem~B is essentially sharp
as shown by the examples in Section~\ref{Examples_and_comments}. To the best of our knowledge,
there is no previous result in the literature pointing out the differences between the resolution
problem for general foliations and for the especial class of foliations covered by Theorem~B.

On the other hand, as far as the above questions are concerned, all the mentioned results require
the notion of Sancho Sanz singular point to be slightly generalized.
This generalization appears in \cite{danielMcquillan} under the form of those singularities
``intrinsically attached'' to orbifolds of type $\Z /2\Z$ while, in the present paper, they are called
persistent nilpotent singular points (Section~\ref{persistentnilpotent-Section})
and are characterized by Theorem~\ref{normalformpersistentnilpotent}.

Naturally Theorem~A also provides an answer to the first question mentioned above. In this regard,
the approach used to prove Theorem~A is, indeed, such that the answer to the above question is ``almost
equivalent'' to Theorem~A itself. To explain this assertion, and also because it naturally fits the structure of
our discussion, it is convenient to provide an explicit
statement in the form of Theorem~\ref{Microlocalversion_TheoremA} below.

Whereas strictly speaking Theorem~\ref{Microlocalversion_TheoremA} is
a particular case of Theorem~A, the two statements are basically equivalent thanks to the work of O. Piltant \cite{Piltant}.
In fact, most of our discussion on general foliations will revolve around the
proof of Theorem~\ref{Microlocalversion_TheoremA}. Once this theorem is established, Piltant's (axiomatic) patching theorem
allows us to derive Theorem~A by repeating word-by-word
a discussion already carried out in \cite{canorochetc}.

\begin{theorem}
\label{Microlocalversion_TheoremA}
Assume that $\fol$ cannot be resolved by a finite sequence of standard
blow-ups centered at singular sets. Then there exists a sequence of one-point blow ups (centered
at singular points) leading to
a foliation $\fol'$ with a singular point $p$ around which $\fol$ is given by a vector field of the form
$$
(y + zf(x,y,z)) \frac{\partial}{\partial x} + zg(x,y,z) \frac{\partial}{\partial y} + z^n \frac{\partial}{\partial z}
$$
for some $n \geq 2$ and holomorphic functions $f$ and $g$ of order at least~$1$ with $\partial g / \partial x (0,0,0) \ne 0$.
Furthermore we have:
\begin{itemize}
  \item The resulting foliation $\fol'$ admits a formal separatrix at $p$ which is tangent to the $z$-axis;

  \item The exceptional divisor is locally contained in the plane $\{ z=0\}$.
\end{itemize}
\end{theorem}

By now, we can go back to our chronological review of the literature. After the examples found by Sancho and Sanz,
the next truly major result in the area is due to D. Panazzolo \cite{P}. In \cite{P}, Panazzolo considers singularities
of real foliations in (real) dimension~$3$. He works in the real setting, rather than in the complex one,
due to the fact that his original motivation was Hilbert's problem on the number of limit cycles of a
polynomial vector field on $\R^2$. He then shows that
the corresponding germs of foliations can always be turned into a foliation all of whose singular points are elementary
{\it by means of a finite sequence of weighted blow ups centered at singular sets}. The proof of this fact is very elaborate
and ultimately relies on a construction associating to a singular foliation an array of six entries along with an order
on the resulting family of (possible) arrays. Panazzolo's theorem follows from showing that this quantity (array) always decreases
strictly under a suitable weighted blow up. Panazzolo's algorithm to choose the weighted blow up to be performed in
each situation is, in turn, based on the Newton diagram of the singular point. Here we also mention that his algorithm
is well adapted to transform vector fields, and not only foliations, in the sense that for the former we also need
to keep track of the divisors of zeros/poles. We will return to this point later in this section.

After Panazzolo's paper \cite{P}, McQuillan and Panazzolo extended the result to the complex case in their original
preprint \cite{danielMcquillanpreprint} whose published version is~\cite{danielMcquillan}. We refrain for providing more
information on \cite{danielMcquillan} here since a detailed discussion will be carried out in the next section.

A few years later, the topic of resolution of foliations on $(\C^3,0)$ was revisited by Cano-Roche-Spivakovsky
in \cite{canorochetc}. We will close this paragraph with a brief discussion of the material in \cite{canorochetc}
since our proof of Theorem~A builds on their approach.

From the very beginning, the general approach of resolution of singularities due to Zariski is followed in \cite{canorochetc}
and this makes their paper markedly different from \cite{danielMcquillan}. Since Zariski's point of view is followed,
the notion of {\it valuation}\, becomes
central in \cite{canorochetc} and the resolution problem is divided in two parts. Namely, there is the {\it local (resolution)
problem}\, which consists of ``simplifying'' - not necessarily all the singularities of a foliation - but only those
lying in the center of a given valuation (identified with its transforms, or extensions, through blow ups). Resolution results
for singularities lying in the center of a valuation are often referred to as {\it local uniformization theorems}.
Once a convenient local uniformization result is obtained, the second part of the problem deals with its ``globalization''. In
other words, once it is proved that for every valuation $\nu$, the singularities lying in the center of $\nu$
can be simplified (in some appropriate sense), we try to conclude that, in fact, all singularities of the foliation
can be simplified in the same sense.

In the present case, there is an axiomatic {\it gluing theorem} due to O. Piltant \cite{Piltant} providing a
very satisfactory general answer to the ``globalization problem''. Basically, as noted in \cite{canorochetc}, any
solution to the ``local problem'' that is obtained in a reasonably natural way can be turned into a global result
by this technique. Owing to Piltant result, the fundamental difficulty of the resolution problem lies in its
local version, namely, in obtaining a suitable local uniformization theorem.

The first main result - called Theorem~1 - in \cite{canorochetc} asserts that singularities in the center of a valuation
can always be simplified until they become {\it log-elementary}. We refer the reader to \cite{canorochetc} for
the accurate definition of log-elementary singularities since, for our purposes, it suffices to know that
they are at worst {\it quadratic}\,
in the sense that they are locally given by a representative vector field with non-zero second jet.

Theorem~1 is then turned into a global result - Theorem~2 in \cite{canorochetc} - by resorting to Piltant's theorem.
Summarizing, Theorem~2 in \cite{canorochetc} establishes the existence of a birational model for the initial foliation
where singular points are at worst log-elementary. Unlike \cite{danielMcquillan}, only standard blow ups are used in
the construction of the birational model in question. Nonetheless there is an evident disadvantage in the fact that
log-elementary singular points are still significantly harder to be dealt with than elementary singular points.

Apart from Theorems~1 and~2, the paper \cite{canorochetc} also contains a few more technical results making additional
non-trivial steps towards understanding those singularities that cannot be resolved by means of standard
blow ups. Aside some basic observations about the valuations that can pose obstacles to the local uniformization,
Theorem~3 of \cite{canorochetc} provides a sort of ``weak characterization'' of foliations
that {\it cannot be resolved}\, (by standard blow ups as it was always the case bar explicit mention on contrary).
If $\fol$ is one of these foliations, Theorem~3 in \cite{canorochetc} asserts
the existence of a valuation $\nu$ and of a formal surface $\widehat{W}$ having {\it transverse maximal contact}\, with
$\nu$, cf. \cite{canorochetc} or Section~\ref{Provingtheproposition}. Note that the condition about maximal
transverse contact can geometrically be interpreted by saying that the for every sequence of blow ups, the transform of $\widehat{W}$
will always pass through the center of $\nu$.

The material from \cite{canorochetc} sketched above will all enter in the proof of our Theorem~A. However, bar the results
established in \cite{canorochetc}, the remainder of the proof of Theorem~A will require only elementary methods from the
theory of foliations/singularities. Indeed, there is another characterization of foliations
that {\it cannot be resolved}\, which is more accurate than Theorem~3. This characterization is the content of
Proposition~\ref{BasedonCano_Roche_Spivakovsky} which was communicated to us a number of years ago by F. Cano.
Roughly speaking, if $\fol$ cannot be resolved, then $\fol$ must admit
a {\it formal separatrix} giving rise to a {\it sequence of infinitely near singular points}\, (which, in turn,
cannot be resolved, see Section~3 for terminology).

We believe that Proposition~\ref{BasedonCano_Roche_Spivakovsky} should be attributed to F. Cano although no proof
is available in the literature. For this reason, this paper includes a proof of this proposition relying
on Theorem~3 of \cite{canorochetc}. The proof given in Section~\ref{Provingtheproposition} seems to be original
in the sense that it may differ from the original argument envisaged by F. Cano.
In the present paper, the proof of Proposition~\ref{BasedonCano_Roche_Spivakovsky}
is split in two cases according to whether or not the formal surface is invariant by the foliation in question.
This, somehow, allows us to keep the discussion essentially elementary while taking some advantage of the $2$-dimensional
situation. With Proposition~\ref{BasedonCano_Roche_Spivakovsky} in hand, the remainder
of the proof of Theorem~\ref{Microlocalversion_TheoremA} is totally elementary with explicit computations.

\subsection{On McQuillan Panazzolo \cite{danielMcquillan}}

In this section we shall explain in detail the desingularization result proved in \cite{danielMcquillan} as
explained to us by D. Panazzolo. We will also compare the construction in \cite{danielMcquillan} with the
one carried out in this paper. Finally, we note that in the course of this section the discussion is focused
on {\it resolution theorems for general foliations}\,: comments on the additional ideas needed for Theorem~B are deferred to Section~\ref{commentsonthmB}.

Let us begin with some basic comments about weighted blow ups on a complex manifold of dimension~$3$ and the
transform of foliations. Unlike standard blow ups that keep the smooth nature of the space, the use of weighted blow
ups leads to spaces possessing orbifold-type singular points. Thus there is a loss of regularity but since
orbifold-type singular points are rather easy to handle, this is a minor issue. Up to allowing these singular
points to be present, the space resulting from the
(weighted) blow-up still is birationally equivalent to the initial one. In particular,
foliations can be transformed without any restrictions under weighted blow ups to yield new {\it birational models for them}.

The last sentence contrasts a bit with the case of vector fields and this deserves a specific comment.
Consider the (standard) blow up $\widetilde{X}$ of a holomorphic vector field $X$. It is easy to check that
the vector field $\widetilde{X}$ retains the holomorphic character provided that {\it the center of the
blow up map}\, is invariant by~$X$. In particular, this condition is satisfied if blow ups are centered
at the singular set of the underlying foliation. This statement, however, does not apply to general
weighted blow ups as follows from the example below.

\vspace{0.1cm}

\noindent $\bullet$ {\bf Example}. Consider the holomorphic vector field $X = F(x,y,z) \partial /\partial x +
G (x,y,z) \partial /\partial y + H (x,y,z) \partial /\partial z$ where $F(x,y,z) =y$ and $G$ and $H$ are such that
the $z$-axis $\{ x=y=0\}$ is contained in the singular set of $X$. Let $(x,t,z)$ be coordinates for the weighted blow-up
(of weight~$2$) centered at the $z$-axis in which the corresponding projection map $\Pi$ is given by
$\Pi (x,t,z) = (x^2, tx, z)$. A direct inspection shows that the corresponding transform $\Pi^{\ast} X$ of $X$ is given
by
\begin{eqnarray*}
\Pi^{\ast} X & = & \frac{1}{2x} F(x^2, tx, z) \frac{\partial}{\partial x} + \left[ -\frac{t}{2x^2} F(x^2, tx, z) +
\frac{1}{x} G(x^2, tx, z) \right]\frac{\partial}{\partial t} + \\
& & \, + H (x^2, tx, z) \frac{\partial}{\partial z} \, .
\end{eqnarray*}
Clearly $F(x^2, tx, z)/2x$ and $G(x^2, tx, z) /x$ are both holomorphic but $t F(x^2, tx, z) /2x^2$ is {\it strictly meromorphic}.
Therefore $\Pi^{\ast} X$ is meromorphic with poles over the exceptional divisor.

\vspace{0.1cm}

\noindent In fact, for the above blow up, the condition for the blow up of a holomorphic vector field $X$ to retain
its holomorphic nature can be explained as follows. For $\lambda \in \C^{\ast}$, consider the family of maps
$T_{\lambda} : \; \C^{3} \rightarrow \C^3$ given by $T_{\lambda} (x,y,z) = (\lambda^2 x, \lambda y ,z)$. Next, if
$T_{\lambda}^{\ast} X$ denotes the pull-back $T_{\lambda}^{\ast} X$, then the blow up of $X$ will be holomorphic
if $T_{\lambda}^{\ast} X$ converges to a holomorphic vector field as $\lambda \rightarrow 0$. The reader will have no difficulty
in working out the general case or to formulate equivalent conditions.

After this short introduction, we are ready to discuss the content of \cite{danielMcquillan}. Basically, this paper
consists of two parts, the first one relying heavily on Panazzolo's previous work \cite{P}. Recall that \cite{P}
provides a resolution of singularities (of real analytic foliations on $(\R^3,0)$) by means of sequences of weighted blow ups.
The first part of \cite{danielMcquillan} is devoted to showing that the algorithm of \cite{P} applies equally well in
the general case of holomorphic foliations on $(\C^3,0)$. As explained above, this provides a birational model
for the foliation in question on a space possessing orbifold-type singular points.
Furthermore, there is a natural notion
of elementary singular point for a (singular) foliation $\fol$ defined on this space. Namely, a singular point
of $\fol$ is said to be elementary if the foliation is represented by
a vector field with elementary singular points in an {\it orbifold coordinate}\, for the space. This result summarizes
the first part of \cite{danielMcquillan}.

In the second part of \cite{danielMcquillan}, the authors consider the problem of resolving the singular points of
the ambient space while keeping the singular points of the foliation elementary. Then they go on to show that this resolution
can always be found except when the singular point correspond to a $\Z /2\Z$-orbifold. Therefore, at least as far
as foliations are concerned, they manage to obtain a birational model for the foliation possessing only
$\Z /2\Z$-orbifold singular points and where all the singular points of the foliation in question are elementary.

Since resolution theorems are also of interest in the study of singular points of vector fields, rather than foliations,
it is natural to ask how the above procedure affects the divisor of zeros/poles of a vector field. This is, indeed,
a point that can easily be missed in \cite{danielMcquillan} since it very much hinges in a characteristic of the resolution
algorithm in \cite{P}, and we thank the referee for having clarified the issue for us. It turns out that the centers
of each weighted blow up used in \cite{P}, and reproduced in the first part of \cite{danielMcquillan}, are what is called
{\it strictly invariant with respect to the quasi-homogeneous filtration in question}, see \cite{P} for terminology.
This means that {\it the transform of holomorphic vector fields remains holomorphic}. Taking all these issues together,
the resolution theorem in \cite{danielMcquillan} can be formulated as follows.

\begin{theorem}
\label{danielJDG}
\noindent {\rm ({\bf [25]})}\hspace{0.1cm} Let $\fol$ be a singular holomorphic foliation on $(\C^3,0)$. There is a
sequence of weighted blow-ups
\begin{equation}
\fol_0 \stackrel{\Pi_1}\longleftarrow \fol_1 \stackrel{\Pi_2}\longleftarrow \cdots
\stackrel{\Pi_l}\longleftarrow \fol_l \label{weightedblowups}
\end{equation}
satisfying the following conditions:
\begin{itemize}
  \item The center of each weighted blow up is strictly invariant with respect to the quasi-homogeneous filtration in question
  (hence a holomorphic vector field remains holomorphic).

  \item The ambient space is an analytic space of dimension~$3$ whose singular points are $\Z /2\Z$-orbifold type
  and the total blow-up map $\Pi_1 \circ \cdots \circ \Pi_l$ is birational.

  \item The singular points of $\fol_l$ are elementary in orbifold coordinates.
\end{itemize}
\end{theorem}

Hence the basic difference between Theorem~\ref{danielJDG} and Theorem~A lies in the fact that Mcquillan-Panazzolo work in the
category of weighted blow ups while we stick with standard blow ups as much as possible.
In concrete applications, the choice of one statement over the other will probably go down to a matter of personal taste.
For example, it
is fair to claim that there is no fundamental reason to privilege standard blow ups over weighted ones so that
considering weighted blow ups increases your chances of finding a resolution in a smaller number of steps. It can then
be reckoned that in several applications of Seidenberg's theorem, the number of blow ups used is irrelevant while
having essentially a single type of blow up map might make it slightly easier to handle various types of index formulas.

There is an additional couple of differences between these two theorems that we believe are worth drawing the reader's
attention to. First, Panazzolo's paper \cite{P}, and hence \cite{danielMcquillan}, does provide an algorithm
to obtain a resolution model for a given foliation. In contrast to this, our Theorem~A is essentially not algorithmic
as it contains arguments based on contradictions arising from assuming the non existence of a sequence of blow ups
with certain required properties. The other difference is that, on the other hand, our Theorem~A answers the
question of deciding how close to a foliation with elementary singular points it is possible to go while {\it sticking}\,
with standard blow ups, which might be a natural curiosity for people with background in foliation/differential equations.

It should be clear that these differences between Theorem~\ref{danielJDG} and Theorem~A are, in many senses, minor ones.
So the feeling that the choice of which version to use will probably depend on the author's taste
or background seems to be somehow strengthened by them. A point, however, where everyone is likely to agree on is that
a new proof of an important result, as it is undoubtedly the case here, is always welcome and helps to increase the
general understanding of the problem in question.

\subsection{The structure of the paper and of proofs of Theorems~A and~B}\label{commentsonthmB}

Let us close this section by detailing the structure of this article and the inter-dependence of the different
sections.

We begin by recalling that a foliation $\fol$ can be
resolved (or it is resolvable) if there
is a finite sequence of blow ups (centered in singular sets) leading to a foliation all of whose singular points are
elementary. Recall also that the term {\it blow-up}\, always means a standard blow up. In particular, whenever
weights are used we explicitly refer to {\it weighted blow ups}.

As already indicated, the initial motivation of this paper was to prove Theorem~B. The idea to prove this theorem
was based on F. Cano's comment related to Proposition~\ref{BasedonCano_Roche_Spivakovsky}.
The sketch of the envisaged proof was as follows. let $\fol$
be a foliation tangent to a semicomplete vector field $X$ and assume aiming at a contradiction that $\fol$ cannot
be resolved. Owing to Proposition~\ref{BasedonCano_Roche_Spivakovsky}, there must exist a formal separatrix $S$ for $\fol$.
If this separatrix were convergent, we might restrict $X$ to $S$ and try to argue from some basic properties
of semicomplete vector fields as in \cite{JR1} {\it provided that} the restriction of $X$ to $S$ is not identically zero.
Having a merely formal separatrix, however, prevents us from making sense of the restriction of $X$ to $S$ (though, as will be seen,
it essentially rules out the inconvenient situation where the restriction of $X$ to $S$ vanishes identically).
To remedy for the formal character of $S$, it is natural to consider results about ``sectorial normalization'' for $\fol$
providing asymptotic estimates for its integral curves over conveniently chosen sectors. Naturally, for
this approach to be effective, some control about the ``angle'' of the mentioned sectors is needed.
In other words, it should be proved that the sector in question is ``large enough'', in some suitable sense.

Modulo taking for granted Proposition~\ref{BasedonCano_Roche_Spivakovsky}, the main difficulty to make the above argument
accurate clearly lies in obtaining a suitable sectorial normalization for the foliation $\fol$. As a matter of fact,
Ramis-Sibuya theorem in \cite{ramisS} provides a very general result on the existence of sectorial normalizations for formal
separatrices. Nonetheless, at this level of generality, it is virtually impossible to estimate the ``angle'' where the
integral curves satisfy the expected asymptotic conditions. A natural alternative was then to consider classical results
due to Malmquist: these come with suitable estimates for the ``angle'' of the section but they require the foliation
$\fol$ to have a particularly simple form, cf. \cite{Ince}, \cite{Malm}.

The assumptions in Malmquist theorem \cite{Malm} immediately led us to consider resolution procedures for $\fol$, namely the results
in \cite{danielMcquillan} and \cite{canorochetc}. The obstacle to apply \cite{canorochetc} was evident: their ``final models''
were still not ``simple enough'' to satisfy the conditions in Malmquist theorem.

Concerning the possibility of using McQuillan-Panazzolo theorem (Theorem~\ref{danielJDG}), we were with two issues.
The smaller issue had to do with the singularities associated with $\Z /2\Z$-orbifolds that cannot be turned into elementary
ones unless a blow up with weight~$2$ is performed. The characterization of these foliations presented in \cite{danielMcquillan}
is an invariant one while the use of Malmquist theorem and subsequent derivation of more ``quantitative'' information
requires slightly more explicit normal forms. Of course obtaining normal forms for these singularities from the characterization
provided in \cite{danielMcquillan} is rather straightforward so that this was not our main concern. Also, we mention
that the corresponding material is ``implicitly'' included in the present paper
(Sections~\ref{multiplicityalongseparatrix} and~\ref{persistentnilpotent-Section}).

On the other hand, we were more seriously concerned about the behavior of the divisor of zeros
of a vector field under the sequence of weighted blow ups provided in \cite{danielMcquillan}. Basically, at that point
in time, we had no confirmation of
the information presented in the first item of the statement of Theorem~\ref{danielJDG}. The issue, once again, stemmed from
our strategy to prove Theorem~B. More precisely, with the above material about elementary singularities for the
underlying foliation in place, Malmquist theorem becomes effective. Namely, this theorem yields
suitable asymptotic expansions whose ``angle'' is directly related to the order of the restriction to $S'$
(the transform of $S$) of a local representative for the foliation $\fol'$.
The nature of semicomplete vector fields, however, basically requires $X'$ to have
a non-empty divisor of zeros (and an empty divisor of poles) transverse to $S'$ for the desired contradiction to arise.
This explains our initial hesitation with respect to using weighted blow ups.

What precedes added to our general feeling that it would be nice to ``complete'' the work of Cano-Roche-Spivakovsky
\cite{canorochetc} to obtain a resolution theorem through Zariski classical approach. Besides,
Theorem~3 in \cite{canorochetc} already provided a characterization of
foliations that cannot be resolved which, albeit somewhat ``coarse'', looked promising in terms of enabling us
to prove Proposition~\ref{BasedonCano_Roche_Spivakovsky}.

The remainder of this paper will be devoted to properly implementing the above described strategy.

Section~\ref{multiplicityalongseparatrix} is very elementary and discusses the effect of blow ups on a sequence of singular
points determined by a formal separatrix along with its transforms. The basic idea is to consider the multiplicity of the
foliation along the separatrix in question and study the way this multiplicity varies under sequences of blow ups.
Whereas multiplicity of a foliation along a separatrix is a basic example of valuation, no general result on valuation
is required in the course of the discussion which requires only basic knowledge about blow ups.

Section~\ref{persistentnilpotent-Section} continues the discussion in
Section~\ref{persistentnilpotent-Section} and includes, in particular, the notion
of {\it persistent nilpotent singularity}. The main result of Section~\ref{persistentnilpotent-Section} being
precisely the characterization of persistent nilpotent singularity, namely Theorem~\ref{normalformpersistentnilpotent}.
The reader will not fail to note that our ``persistent nilpotent singularities'' correspond to the singularities
associated with $\Z /2\Z$-orbifold type singular points of \cite{danielMcquillan}. Furthermore, the normal form
provided by Theorem~\ref{normalformpersistentnilpotent} is equivalent to the invariant characterization of the latter
formulated in \cite{danielMcquillan}.

In Section~\ref{persistentnilpotent-Section} we also formulate Proposition~\ref{BasedonCano_Roche_Spivakovsky} in elementary
terms, i.e. avoiding any use of {\it valuations}. This section ends with the proof of Theorem~\ref{Microlocalversion_TheoremA}
obtained by combining the general discussion in Section~\ref{multiplicityalongseparatrix} with
Theorem~\ref{normalformpersistentnilpotent} and with Proposition~\ref{BasedonCano_Roche_Spivakovsky} (which is taken for
granted at this moment).
All this material is elementary and requires only some familiarity with (singular) holomorphic foliations at the level
of the first chapters of \cite{bookYakovenko}. Yet, we note that readers familiar with valuations and with
Piltant's theorem in \cite{Piltant} will probably be able to easily derive Theorem~A from Theorem~\ref{Microlocalversion_TheoremA}.

Section~\ref{semicompletestuff} is devoted to the proof of Theorem~B. This proof has basically four ingredients.
Besides Theorems~\ref{Microlocalversion_TheoremA} and~\ref{normalformpersistentnilpotent}, it also requires Malmquist theorem
on asymptotic expansion of solutions of certain systems of equations \cite{Malm} and, of course, some background revolving around
the notion of semicomplete singularity. Bar a more specific
result on semicomplete vector fields detailed in \cite{Adolfo}, the background material on semicomplete vector fields is covered
in \cite{JR1} and recalled in the beginning of Section~\ref{semicompletestuff} in an effort to make the discussion kind of
self-contained. As follows from the preceding, the background material required for this section is significantly
larger than for the previous ones, though the discussion is still mostly
accessible to readers familiar with the contents of \cite{bookYakovenko} and/or \cite{arnold}.

Section~\ref{semicompletestuff} is followed by Section~\ref{Examples_and_comments} which contains examples
illustrating the sharp nature of most of our results as well as some additional constraints on semicomplete
persistent nilpotent singularities arising from the holonomy of (formal) separatrices. In particular, in this
section we show how to extend the vector field
$$
Z = x^2 \partial /\partial x + xz \partial /\partial y + (y -xz) \partial /\partial z
$$
in a complete vector field on a suitable open complex manifold of dimension~$3$. We also provide some explicit examples
of persistent nilpotent singular points which cannot lead to any of the Sancho-Sanz vector fields by means of sequences
of blow ups (i.e. these examples cannot be ``further simplified'').

Finally almost all of Section~\ref{Provingtheproposition} is devoted to the proof of
Proposition~\ref{BasedonCano_Roche_Spivakovsky}. This section requires familiarity with resolution techniques based
on valuations, including Zariski
approach to the resolution problem. In more concrete and accurate terms, it requires a good knowledge of the
material contained in \cite{canorochetc}. The proof of Proposition~\ref{BasedonCano_Roche_Spivakovsky} then
completes the proof of our Theorem~\ref{Microlocalversion_TheoremA}. In turn, at this point,
Theorem~A becomes little more than a direct blend of Theorem~\ref{Microlocalversion_TheoremA} with
Piltant's patching theorem \cite{Piltant}.

\section{The multiplicity of a foliation along a separatrix}\label{multiplicityalongseparatrix}

For background in the material discussed below, the reader is referred to \cite{arnold} and to \cite{bookYakovenko}.
Consider a singular holomorphic foliation $\fol$ of dimension~$1$ defined on a neighborhood of the origin in $\C^3$. By definition,
$\fol$ is given by the local orbits of a holomorphic vector field $X$ whose singular set ${\rm Sing}\, (X)$ has codimension
at least~$2$. The vector field $X$ is said to be a {\it vector field representing $\fol$}. Albeit the representative
vector field $X$ is not unique, two of them differ by a multiplicative locally invertible function.
The singular set ${\rm Sing}\, (\fol)$ of
a foliation $\fol$ is defined as the singular set of a representative
vector field $X$ so that it has codimension greater than or equal to~$2$.

Conversely, with every (non-identically zero) germ of holomorphic vector field on $(\C^3, 0)$, it is associated
a germ of {\it singular holomorphic foliation $\fol$}. Up to eliminating non-trivial common factors of the components of $X$,
we can replace $X$ with another holomorphic vector field $Y$ whose singular set has codimension at least~$2$. The foliation
$\fol$ is then given by the local orbits of $Y$.
A global definition of singular (one-dimensional) holomorphic foliations can be formulated as follows.

\begin{defi}
Let $M$ be a complex manifold. A singular ($1$-dimensional) holomorphic foliation $\fol$ on $M$
consists of a covering $\{ (U_i, \varphi_i)\}$ of
$M$ by coordinate charts together with a collection of holomorphic vector fields $Z_i$ satisfying the following conditions:
\begin{itemize}
  \item For every $i$, $Z_i$ is a holomorphic vector field having singular set of codimension at least~$2$ which is
  defined on $\varphi_i (U_i) \subset \C^n$.

  \item Whenever $U_i \cap U_j \neq \emptyset$, we have $\varphi_i^{\ast}Z_i = g_{ij} \varphi_j^{\ast} Z_j$ for
  some nowhere vanishing holomorphic function $g_{ij}$ defined on $U_i \cap U_j$.
\end{itemize}
\end{defi}

Throughout this section and the next one, all blow ups of foliations (and of vector fields)
are {\it standard}. Moreover the centers of the blow ups are always contained in the singular set of
the foliation in question (associated foliation in the case of vector fields).

Consider now a holomorphic foliation $\fol$ defined on a complex manifold $M$ of dimension~$3$ and let $p \in M$
be a singular point of $\fol$. A {\it separatrix} (or analytic separatrix) for
$\fol$ at~$p$ is an irreducible analytic curve invariant by $\fol$,
passing through~$p$, and not contained in the singular set ${\rm Sing}\, (\fol)$ of $\fol$.
Along similar lines, a {\it formal separatrix} for
$\fol$ at~$p$ is a formal irreducible curve $S$ invariant by $\fol$ and centered at~$p$. In other words,
in local coordinates $(x,y,z)$ around $p$ where $\fol$ is represented by the vector field
$X = F \partial /\partial x + G \partial /\partial y + H \partial /\partial z$,
the formal separatrix $S$ is given by a triplet of formal series
$t \mapsto \varphi (t) = (\varphi_1 (t), \varphi_2 (t) , \varphi_3 (t))$ satisfying the following (formal) equations
\begin{equation}
\varphi_1'(t) (G \circ \varphi)(t) = \varphi_2'(t) (F \circ \varphi)(t) \; \;  {\rm and} \; \; \varphi_2'(t) (H \circ \varphi)(t) =
\varphi_3'(t) (G \circ \varphi)(t) \label{definitionseparatrix}
\end{equation}
where:
\begin{enumerate}
  \item $(F \circ \varphi)(t)$ (resp. $(G \circ \varphi)(t)$, $H \circ \varphi(t)$) stands for the formal series obtained by composing
  the Taylor series of $F$ (resp. $G$, $H$) at the origin with the formal series of $\varphi$ as indicated.

  \item In the preceding it is understood that at least one of the formal series $(F \circ \varphi)(t)$, $(G \circ \varphi)(t)$,
  and $(H \circ \varphi)(t)$ is not identically zero.
\end{enumerate}
Note that Puiseux theorem allows us to represent an analytic separatrix by a map of the form
$t \mapsto \varphi (t) = (\varphi_1 (t), \varphi_2 (t) , \varphi_3 (t))$ where the formal series $\varphi_i$ ($i=1,2,3$)
are actually convergent. Thus an analytic separatrix can be viewed as a particular case of a formal separatrix
and for this reason our terminology will be such that
whenever we refer to a {\it formal separatrix of $\fol$} the possibility
of having an actual analytic separatrix {\it will not}\, be excluded. If we need to emphasize that a formal separatrix is not analytic,
then we will say that the separatrix in question is {\it strictly formal}.
Finally, we also note that the second condition above is automatically
satisfied whenever $\varphi (t)$ is a strictly formal curve satisfying Equation~(\ref{definitionseparatrix}).

Consider again a singular point $p \in M$ of a holomorphic foliation $\fol$. Choose local coordinates $(x,y,z)$ around~$p$ and
assume that $\fol$ has a formal separatrix $S$ at $p$ which is given in the coordinates $(x,y,z)$ by the formal series
$\varphi (t) = (\varphi_1 (t), \varphi_2 (t) , \varphi_3 (t))$. Consider now a local holomorphic vector field $X$
defined around~$p$ and {\it tangent to}\, $\fol$ {\it but not necessarily}\, representing $\fol$.
Note that the (formal) pull-back of the restriction of $X$ to $S$ by $\varphi$ may be
considered since $S$ is a formal separatrix of $\fol$. This pull-back is a formal vector field in dimension~$1$ given by
\[
\varphi^{\ast} (X|_S) = g(T) \frac{\partial}{\partial T}
\]
where $g$ satisfies
\begin{equation}\label{conditon_g}
(X \circ \varphi)(T) = g(T) \varphi^{\prime}(T)
\end{equation}
as formal series.

We recall the classical notion of multiplicity of a foliation along a separatrix which is also well known as a
basic example of valuation.

\begin{defi}
\label{orderalongseparatrix}
The multiplicity of $X$ along $S$ is the order of the formal series $g$ at $0 \in \C$
\[
{\rm mult}\, (X,S) = {\rm ord}(g,0) \, .
\]
In other words, setting $g(T) = \sum_{k \geq 1} g_k T^k$, ${\rm mult} (\fol,S)$ is the
smallest positive integer $k \in \N^{\ast}$ for which $g_k \ne 0$. This multiplicity equals zero if and only if
the series associated with $g(T)$ vanishes identically.

In turn, the multiplicity of $\fol$ along $S$, ${\rm mult}\, (\fol,S)$, is defined as the multiplicity
along $S$ of a vector field $X$ representing $\fol$ around~$p$. Since $S$ as a separatrix for $\fol$ is not
contained in the singular set of $\fol$, the multiplicity of $\fol$ along $S$ is never equal to zero.
\end{defi}

It is immediate to check that the notions above are well defined in the sense that they depend
neither on the choice of coordinates nor on the choice of the representative vector field $X$.

We begin with a simple albeit important lemma. To fix notation, we will say that $S$ is a (formal) separatrix
for a vector field $X$ if $S$ is a (formal) separatrix for the foliation $\fol$ associated with~$X$. We also
recall that the centers of all blow-ups considered in what follows are contained in the singular sets of the
corresponding foliations.

\begin{lemma}\label{ordervf}
The multiplicity of a vector field along a formal separatrix is invariant by blow-ups (with centers contained in the
singular set of the foliation in question).
\end{lemma}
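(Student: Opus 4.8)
The plan is to show that the multiplicity $\mathrm{mult}(X,S)$ — defined as $\mathrm{ord}(g,0)$ where $(X\circ\varphi)(T)=g(T)\varphi'(T)$ — is preserved when we blow up a point (or smooth invariant center) lying in the singular set of $\fol$. The key observation is that the multiplicity is defined intrinsically through the one-dimensional formal pull-back $\varphi^{\ast}(X|_S)=g(T)\,\partial/\partial T$, so the natural strategy is to track how $\varphi$ and the vector field $X$ transform under a blow-up map $\Pi$ and to check that the transformed pull-back $\tilde\varphi^{\ast}(\tilde X|_{\tilde S})$ has exactly the same order in $T$. Since the statement concerns a vector field (not a foliation represented by a vector field of minimal singular set), the crucial point is that under a blow-up we take the \emph{strict transform} of $X$ — i.e. we do \emph{not} divide out any exceptional factor — so no spurious change of order can be introduced from that source.

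First I would set up local coordinates. After the blow-up $\Pi$, the separatrix $S$ lifts to a formal separatrix $\tilde S$ of the transformed vector field $\tilde X$, and the key fact I will use is that the Puiseux-type parametrization transforms compatibly: if $\varphi(T)=(\varphi_1(T),\varphi_2(T),\varphi_3(T))$ parametrizes $S$, then in the blown-up chart the lifted parametrization $\tilde\varphi$ is obtained by composing $\varphi$ with the (rational) expression of $\Pi^{-1}$, after a possible reparametrization $T\mapsto T$ of the formal parameter. I would choose the affine chart of the blow-up adapted to the tangent direction of $S$ at the center, so that $\tilde S$ passes through a point of the exceptional divisor and $\tilde\varphi$ is again a genuine formal parametrization (no pole). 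In that chart $\Pi$ has the form $(x,y,z)=(\tilde x,\tilde x\tilde y,\tilde x\tilde z)$ (for a point blow-up, with the analogous monomial form for a blow-up along a smooth curve), so that $\tilde X=\Pi^{\ast}X$ is computed by the usual chain rule and the lifted separatrix is read off directly.

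The heart of the argument is the following computation. Writing $\tilde X$ for the pulled-back vector field and $\tilde\varphi$ for the lifted parametrization, the defining relation becomes $(\tilde X\circ\tilde\varphi)(T)=\tilde g(T)\,\tilde\varphi'(T)$, and I must show $\mathrm{ord}(\tilde g,0)=\mathrm{ord}(g,0)$. The clean way to see this is that $\varphi^{\ast}(X|_S)$ is itself a coordinate-free object on the formal $1$-dimensional disc $S$: the blow-up $\Pi$ restricts to a formal \emph{isomorphism} $\tilde S\to S$ (an irreducible formal curve and its strict transform are formally isomorphic away from, and even at, the center, since blowing up a smooth center does not alter the abstract analytic type of a curve transverse-or-tangent to it in the relevant sense). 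Under this isomorphism the pull-back of $X|_S$ corresponds to the pull-back of $\tilde X|_{\tilde S}$, because $\tilde X=\Pi^{\ast}X$ and pull-back of vector fields commutes with restriction to an invariant curve. Hence $g$ and $\tilde g$ are carried to one another by a formal change of the parameter $T$, which preserves the order at $0$. This reduces the whole lemma to the statement that \emph{order of a $1$-variable formal series is invariant under formal reparametrization}, which is immediate.

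**The main obstacle** I anticipate is bookkeeping rather than conceptual: I must make sure the chart of the blow-up is chosen so that $\tilde\varphi$ is a bona fide formal power series (no negative powers of $T$), which requires care when $S$ is tangent to the exceptional divisor or when the center is a curve rather than a point; and I must confirm that taking $\tilde X=\Pi^{\ast}X$ — the total, not strict, transform with no division — is the correct normalization so that the formal isomorphism $\tilde S\to S$ really does intertwine the two one-dimensional pull-backs. Once the compatibility $\Pi\circ\tilde\varphi=\varphi$ (up to reparametrization) is established and the no-division convention is fixed, the invariance of the order is automatic. I would therefore devote the bulk of the written proof to verifying the chart-by-chart form of $\Pi^{\ast}$ applied to $\varphi$ and to checking that $\tilde\varphi'(T)$ and $\varphi'(T)$ differ by a factor whose order is exactly absorbed by the corresponding factor relating $\tilde X\circ\tilde\varphi$ and $X\circ\varphi$, leaving $\mathrm{ord}(\tilde g)=\mathrm{ord}(g)$.
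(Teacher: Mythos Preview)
Your proposal is correct and follows the same route as the paper: both reduce the lemma to the fact that the Puiseux parametrizations satisfy $\Pi\circ\tilde\varphi=\varphi\circ\varpi$ for some formal diffeomorphism $\varpi$ of the parameter disc, whence $\tilde g$ and $g$ differ only by reparametrization and have the same order. The paper carries this out exactly as you plan---choosing the chart so that $\tilde\varphi=\Pi^{-1}\circ\varphi$ is a genuine formal power series, and then (for the non-smooth case) verifying the equality of orders by a direct computation in coordinates $\varphi(T)=(T^m+\cdots,T^n+\cdots,T^p)$ with $p\le m,n$, comparing the $\partial/\partial z$-components before and after blow-up.

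One small caution on phrasing: your parenthetical justification that ``$\Pi$ restricts to a formal isomorphism $\tilde S\to S$'' is not literally true when $S$ is singular (a cuspidal $S$ can acquire a smooth strict transform), so the curves themselves need not be formally isomorphic. What \emph{is} true---and what your argument actually uses---is that $\Pi$ induces a formal diffeomorphism between the normalizations, i.e.\ between the parameter discs of the irreducible Puiseux parametrizations; the paper sidesteps this subtlety in the singular case by doing the explicit order computation rather than invoking the abstract statement.
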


\begin{proof}
The statement means that the multiplicity of a vector field along a formal separatrix is invariant by blow-ups regardless
of whether they are centered at a singular point or at a (locally) smooth analytic curve contained in the singular set of $\fol$.
We will prove the mentioned invariance in the case of blow-ups centered at a point.
The case of blow-ups centered at analytic curves is analogous and thus left to the reader.

Let $X$ be a holomorphic vector field defined on a neighborhood of the origin of $\C^3$ and admitting a formal
separatrix $S$. Let $\pi : \widetilde{\C}^3 \rightarrow \C^3$ denote the blow-up map
centered at the origin and denote by $\tS$ the transform of $S$ by $\pi$.

%Consider an irreducible formal Puiseux parameterization $\varphi$ (resp. $\widetilde{\varphi}$) for $S$ (resp. $\tS$)
%and set
%\[
%\varpi = \varphi^{-1} \circ \pi \circ \widetilde{\varphi}  \, .
%\]
%We only need to prove that $\varpi$ is a formal diffeomorphism at $0 \in \C$ since this clearly implies that
%the multiplicity of $\tX$ on $\tS$ coincides with the multiplicity of $X$ on $S$.
%
%If $S$ is an analytic separatrix, then the above assertion can easily be checked as follows. The parameterizations
%$\varphi$ and $\widetilde{\varphi}$ are defined (convergent) and one-to-one on neighborhoods $U$ and $V$ of $0 \in \C$.
%Similarly $\varpi$ is defined from $V$ to $U$ and is clearly a diffeomorphism from $V \setminus \{0\}$ to $U \setminus \{0\}$.
%Furthermore, $\varpi$ is bounded at
%$0 \in \C$ (in fact, $\varpi$ is continuous at $0 \in \C$ sending the origin of $\C$ into the origin of $\C$) which
%implies that $\varpi$ is holomorphic at $0 \in \C$. Now the one-to-one character of $\varpi$ implies that $\varpi$
%is a diffeomorphism at the origin.
%
%
%
%
%In the general case where $S$ is strictly formal, we proceed as follows.
%First, note that $\varpi$ is clearly a formal diffeomorphism provided that
%$S$ is smooth in the formal sense, i.e. provided that $\varphi^{\prime}(0) \ne (0,0,0)$.
%Hence the order of $\tX$ on $\tS$ coincides with the order
%of $X$ on $S$.

Fixed standard $(x,y,z)$-coordinates on $\C^3$, the formal separatrix $S$ is given by a formal map
of the form $T \mapsto \varphi (T) = (\varphi_1 (T), \varphi_2 (T), \varphi_3(T))$.
Without loss of generality, we assume that
$\varphi$ takes on the form $\varphi (T) = (T^m + {\rm h.o.t}, T^n + {\rm h.o.t}, T^p)$
for $p \leq m, \, n$. In turn, the vector field $X$ is given by
\[
X = F(x,y,z) \frac{\partial}{\partial x} + G(x,y,z) \frac{\partial}{\partial y} + H(x,y,z) \frac{\partial}{\partial z} \, .
\]
Since $S$ is a formal solution of the differential equation associated with $X$, there follows that $\varphi^{\prime}(T)$
and $X \circ \varphi$ satisfy Equation~(\ref{definitionseparatrix}).
Comparing the last component of $\varphi^{\prime}(T)$ and of $(X \circ
\varphi)(T)$ we conclude that the multiplicative function $g$ appearing in Equation~(\ref{conditon_g}) is of order equal
to ${\rm ord}(H \circ \varphi,0) - p + 1$. Thus
\[
{\rm mult}\,(X, S) = {\rm ord} (H \circ \varphi,0) - p + 1 \, .
\]

Let us now compute the order of $\tX$ along $\tS$. For this we consider affine coordinates $(u,v,z)$ where
the blow-up map is given by $\pi(u,v,z) = (uz,vz,z) = (x,y,z)$. The transform of $X$ then becomes
$\tX = (1/z) Z$ where $Z$ is the vector field given by
\[
Z = ( F(uz,vz,z) - u H(uz,vz,z)) \partial /\partial x +
(G(uz,vz,z) - v H(uz,vz,z)) \partial /\partial y + zH(uz,vz,z) \partial /\partial z  \, .
\]
In turn, the transform of $S$ by $\pi$ is by definition the formal curve given by
$\psi (T) = \pi^{\ast} \varphi (T) = (T^{m-p} + {\rm h.o.t}, T^{n-p} + {\rm h.o.t}, T^p)$.
Since $\tS$ is a formal solution of the differential equation associated to $\tX$, there follows again that $\psi^{\prime}
(T)$ and $(\tX \circ \psi) (T)$ satisfy Equation~(\ref{definitionseparatrix}). By comparing their last components, we conclude that
\begin{align*}
{\rm mult}\,(\tX, \tS) &= {\rm ord} \left(H((T^{m-c} + {\rm h.o.t})T^c, (T^{n-c} + {\rm h.o.t})T^c, T^p),0\right) - p + 1 \\
&= {\rm ord} \left(H(T^{m} + {\rm h.o.t}, T^{n} + {\rm h.o.t}, T^p),0\right) - p + 1 \\
&= {\rm ord} (H \circ \varphi,0) \\
&= {\rm mult} \,(X, S) \, .
\end{align*}
The lemma is proved.
\end{proof}

Consider again a foliation $\fol$ defined on a neighborhood of the origin of $\C^3$ along with a formal separatrix
$S$. Whereas Lemma~\ref{ordervf} asserts that the multiplicity of a vector field along a formal separatrix is invariant
by blow-ups, the analogous statement does not necessarily hold for a foliation. Indeed,
let $X$ be a vector field representing $\fol$ around $(0,0,0) \in \C^3$ so that the zero-set of $X$ has codimension
at least~$2$. Finally let $\tX$ denote the pull-back of $X$ by the blowing-map centered at the
origin. If $X$ has order at least~$2$ at the origin, then the singular set of $\tX$ has codimension~$1$ since $\tX$ vanishes
identically on the corresponding exceptional divisor. More precisely,
in the affine coordinates $(u,v,z)$ where $x=uz$ and $y=vz$, we have $\tX = z^{\alpha} Z$ for a certain (holomorphic) vector
field $Z$ having singular set of codimension at least~$2$ and a certain integer $\alpha \geq 1$. In fact,
if $k$ stands for the order of $X$ at $0 \in \C^3$, then we have $\alpha = k$ or $\alpha = k-1$ according to whether or not
the origin is a dicritical singular point.
Here we remind the reader that a singular point is said to be {\it dicritical}\, if the exceptional divisor, given by $\{z=0\}$
in the above affine coordinates, is not invariant by $\tilf$. Next, note that the multiplicity of $X$ along $S$ coincides with the
multiplicity of $\tX$ along $\tS$ (Lemma~\ref{ordervf}). However, the multiplicity of $\tilf$ along $\tS$ is not the
multiplicity of $\tX$ along $\tS$
but rather the multiplicity of $Z$ along $\tS$. More precisely, we have
\begin{align*}
{\rm mult} \, (\tilf,\tS) &= {\rm mult} \, (Z,\tS) \\
&= {\rm mult} \, (\tX, \tS) - {\rm ord} \, (z^{\alpha} \circ \psi, 0) \\
&= {\rm mult} \, (X,S) - {\rm ord} \, (z^{\alpha} \circ \psi, 0) \, ,
\end{align*}
where $\psi$ stands for the triplet of formal series associated with $\tS$. Since the zero-set of $X$
has codimension at least~$2$, there also follows that ${\rm mult} \, (X,S) = {\rm mult} \, (\fol,S)$.
Summarizing, we have proved the following:

\begin{prop}\label{prop_multiplicity}
Let $\fol$ be a holomorphic foliation on $(\C^3,0)$ admitting a formal separatrix $S$.
If $\fol$ has order at least~$2$ at the origin, then
\[
{\rm mult} \, (\tilf, \tS) < {\rm mult} (\fol, S) \, ,
\]
where $\tilf$ (resp. $\tS$) stands for the transform of $\fol$ (resp. $S$) by the one-point blow-up centered at the origin.
\qed
\end{prop}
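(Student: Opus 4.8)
The plan is to reduce the statement to the behavior of a single representative vector field under the blow-up, combined with the invariance already recorded in Lemma~\ref{ordervf}. The conceptual point to exploit is that ${\rm mult}(\fol,S)$ is, by definition, computed with a vector field whose singular set has codimension at least~$2$, whereas the naive pull-back of such a vector field picks up the whole exceptional divisor in its zero set as soon as the order at the origin is $\geq 2$. The drop in multiplicity is precisely the discrepancy between these two normalizations, so the whole argument amounts to locating and quantifying that discrepancy.

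Concretely, I would first fix a vector field $X$ representing $\fol$ near the origin, so that ${\rm mult}(\fol,S) = {\rm mult}(X,S)$ by definition, and then pass to the affine chart $(u,v,z)$ with $x=uz$, $y=vz$ in which the exceptional divisor is $\{z=0\}$ and which is adapted to the tangent direction of $S$ (as in the normalization $\varphi(T)=(T^m+\cdots,T^n+\cdots,T^p)$ with $p\leq m,n$ used in Lemma~\ref{ordervf}). Writing $k\geq 2$ for the order of $X$ at the origin, a homogeneity count on the components of the pull-back $\tX$ in this chart shows that $\tX = z^{\alpha} Z$ for some integer $\alpha\geq 1$ and some holomorphic vector field $Z$ whose singular set again has codimension at least~$2$; here $\alpha=k$ or $\alpha=k-1$ according to whether or not the origin is dicritical. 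Since $Z$ has codimension-$2$ singular set it represents the transformed foliation $\tilf$, so by definition ${\rm mult}(\tilf,\tS)={\rm mult}(Z,\tS)$.

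Next I would carry out the bookkeeping of orders. Letting $\varphi$ be a Puiseux parametrization of $\tS$ and $g$ the one-dimensional series defined by $(\tX\circ\varphi)=g\,\varphi'$, the factorization $\tX=z^{\alpha}Z$ gives $(\tX\circ\varphi)=(z^{\alpha}\circ\varphi)(Z\circ\varphi)$, so the order of $g$ splits additively and yields ${\rm mult}(Z,\tS)={\rm mult}(\tX,\tS)-{\rm ord}(z^{\alpha}\circ\varphi,0)$. Applying Lemma~\ref{ordervf} to the vector field $X$ gives ${\rm mult}(\tX,\tS)={\rm mult}(X,S)={\rm mult}(\fol,S)$, whence ${\rm mult}(\tilf,\tS)={\rm mult}(\fol,S)-{\rm ord}(z^{\alpha}\circ\varphi,0)$. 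The strict inequality then follows once ${\rm ord}(z^{\alpha}\circ\varphi,0)>0$, which holds because $\tS$, being the transform of a separatrix centered at the origin, meets the exceptional divisor $\{z=0\}$ without being contained in it; hence $z\circ\varphi$ vanishes at $0$ to finite positive order (equal to $p$ in the above normalization) and ${\rm ord}(z^{\alpha}\circ\varphi,0)=\alpha\cdot{\rm ord}(z\circ\varphi,0)\geq 1$.

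The only genuinely load-bearing step is the claim that $\alpha\geq 1$, i.e. that the hypothesis $k\geq 2$ forces $z$ to divide the pull-back $\tX$; this is exactly where the assumption ``order at least~$2$'' enters and is the sole place the proof would fail for order-one singularities. I expect this to be routine (a direct homogeneity estimate on the three components of $X$ after substituting $x=uz$, $y=vz$), but it is the crux, together with the accompanying check that $z\circ\varphi$ does not vanish identically. Everything else is a formal splitting of orders and an invocation of Lemma~\ref{ordervf}, so no further difficulty is anticipated.
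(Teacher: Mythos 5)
Your proposal is correct and follows essentially the same route as the paper: factor the pulled-back representative as $\tX = z^{\alpha}Z$ with $\alpha \geq 1$ (using the order-at-least-$2$ hypothesis, with $\alpha = k$ or $k-1$ according to dicriticality), invoke Lemma~\ref{ordervf} for the invariance of ${\rm mult}(\tX,\tS)$, and subtract ${\rm ord}(z^{\alpha}\circ\varphi,0)>0$ to get the strict drop. The paper records exactly this computation in the paragraph preceding the statement, which is why the proposition is stated with \qed.
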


In order to state the analogue of Proposition~\ref{prop_multiplicity} for blow-ups centered at smooth (irreducible)
curves contained in ${\rm Sing}\, (\fol)$, a notion of order for $\fol$ with respect to the curves in question is needed.
A suitable notion can be introduced as follows.

Recall first that the order of the foliation $\fol$ at the origin is defined as the degree of the first non-zero homogeneous
component of a vector field $X$ representing $\fol$. The mentioned degree, as well as all of the corresponding non-zero
homogeneous component, may be recovered through the family of homotheties $\Gamma_{\dl} : (x,y,z) \mapsto (\dl x, \dl y, \dl z)$.
More precisely, the degree is simply the unique positive integer $d \in \N$ for which
\begin{equation}\label{homogeneous_component_prime}
\lim_{\dl \rightarrow 0} \frac{1}{\dl^{d-1}} \Gamma_{\dl}^{\ast} X
\end{equation}
is a non-trivial vector field. Furthermore, the non-trivial vector field obtained as this limit is exactly the first
non-zero homogeneous component of $X$. We shall adapt this construction to define the order of $\fol$ over
a curve.

Let then $C$ be a smooth curve contained in ${\rm Sing}\, (\fol)$. Our purpose is to define the order of $\fol$
with respect to $C$. Clearly there are local coordinates $(x,y,z)$ in which the curve in question coincides with
the $z$-axis, i.e. it is given by $\{x=y=0\}$ (as usual we only perform blow-ups centered at smooth curves; naturally this is not
a very restrictive condition since every curve can be turned into smooth by the standard resolution procedure).
The blow-up centered at $\{x=y=0\}$
is equipped with affine coordinates $(x,t,z)$ and $(u,y,z)$ where the corresponding blow-up map is given by $\pi_z (x,t,z) = (x,tx, z)$
(resp. $\pi_z (u,y,z) =(uy, y, z)$).
Consider now the family of automorphisms given by
\[
\Lambda_{\dl} : (x, y, z) \mapsto (\dl x, \dl y, z).
\]
The {\it order of $\fol$ with respect to $C$} (or the order of $\fol$ over $C$) is defined as the unique
integer $d \in \N$ for which
\begin{equation}\label{homogeneous_component}
\lim_{\dl \rightarrow 0} \frac{1}{\dl^{d-1}} \Lambda_{\dl}^{\ast} X
\end{equation}
yields a non-trivial vector field. Note that this integer~$d$ may be seen as the degree of $X$ with respect to the variables
$x, \, y$. In fact, assume that in coordinates $(x,y,z)$ the vector field $X$ is given by $X = X_1(x,y,z) \partx + X_2(x,y,z)
\party + X_3(x,y,z) \partz$. The pull-back of $X$ by $\Lambda_{\dl}$ becomes
\[
\Lambda_{\dl}^{\ast} X = \frac{1}{\dl} \left[ X_1(\dl x, \dl y, z) \frac{\partial }{\partial x} + X_2(\dl x, \dl y, z) \frac{\partial
}{\partial y}\right] + X_3(\dl x, \dl y, z) \frac{\partial }{\partial z}
\]
Denote by $k$ (resp. $l$) the maximal power of $\dl$ that divides $X_1(\dl x, \dl y, z) \partial /\partial x + X_2(\dl x, \dl y, z)
\partial /\partial y$ (resp. $X_3(\dl x, \dl y, z) \partial /\partial z$). The order $d$ defined above is simply the minimum between
$k$ and $l+1$.

The analogue of Proposition~\ref{prop_multiplicity} for blow-ups centered at smooth (irreducible) curves can now be stated
as follows.

\begin{prop}\label{prop_multiplicity_curves}
Let $\fol$ be a holomorphic foliation on $(\C^3,0)$ admitting a formal separatrix $S$. Let
$\tilf$ (resp. $\tS$) stands for the strict transform of $\fol$ (resp. $S$) by the blow-up centered at a smooth (irreducible)
curve contained in ${\rm Sing}\, (\fol)$. If $\fol$ has order at least~$2$ with respect to the blow-up center, then
\[
{\rm mult} \, (\tilf, \tS) < {\rm mult} (\fol, S) \, .
\]
\qed
\end{prop}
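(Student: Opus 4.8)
The plan is to follow closely the strategy already used for Proposition~\ref{prop_multiplicity}, simply replacing the exceptional divisor $\{z=0\}$ of the point blow-up by the exceptional divisor of the blow-up centered at the curve~$C$. As in the point case, the whole argument rests on Lemma~\ref{ordervf} — in its version for blow-ups centered at smooth curves in the singular set — which guarantees that the multiplicity of a \emph{vector field} $X$ representing $\fol$ is preserved, namely ${\rm mult}\,(\tX,\tS) = {\rm mult}\,(X,S) = {\rm mult}\,(\fol,S)$, where $\tX = \pi_z^{\ast}X$. The only place where the order of $\fol$ over $C$ enters is in estimating how much multiplicity is lost when passing from $\tX$ to a genuine representative of $\tilf$, i.e. after dividing out the power of the exceptional divisor by which $\tX$ vanishes.

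Concretely, I would place $\tS$ in the chart $(x,t,z)$ with $\pi_z(x,t,z)=(x,tx,z)$ (the chart $(u,y,z)$ being symmetric) and compute the pull-back
\[
\tX = (X_1\circ\pi_z)\,\partx + \frac{1}{x}\bigl(X_2\circ\pi_z - t\,(X_1\circ\pi_z)\bigr)\,\frac{\partial}{\partial t} + (X_3\circ\pi_z)\,\partz \, ,
\]
where $X_i\circ\pi_z = X_i(x,tx,z)$. Decomposing each $X_i = \sum_j X_{i,j}$ into components homogeneous of degree $j$ in $(x,y)$ (with coefficients depending on $z$) and using $X_{i,j}(x,tx,z) = x^j X_{i,j}(1,t,z)$, one reads off that the three components of $\tX$ are divisible by $x^{k_1}$, $x^{k-1}$ and $x^{l}$ respectively, where $k$ and $l$ are as in the definition of the order over $C$ and $k_1 \geq k$ is the $(x,y)$-order of $X_1$ alone. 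Letting $\db$ denote the order of vanishing of $\tX$ along the exceptional divisor $\{x=0\}$, i.e. the largest power of $x$ dividing all of $\tX$, I then get $\db \geq \min(k-1,l)$. Since the order $d$ of $\fol$ over $C$ equals $\min(k,l+1)$, a direct check in the two cases $d=k$ and $d=l+1$ gives $\min(k-1,l)=d-1$, whence $\db \geq d-1$; in particular $\db \geq 1$ as soon as $d \geq 2$.

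To conclude I would write $\tX = x^{\db} Z$ with $Z$ a vector field of codimension $\geq 2$ singular set representing $\tilf$ (exactly as $\tX = z^{\alpha}Z$ in the point case), and let $\psi$ be the Puiseux parametrization of $\tS$. Since scaling a vector field by $x^{\db}$ increases its multiplicity along $\tS$ by ${\rm ord}\,(x^{\db}\circ\psi,0) = \db\,{\rm ord}\,(x\circ\psi,0)$, one obtains
\begin{align*}
{\rm mult}\,(\tilf,\tS) = {\rm mult}\,(Z,\tS) &= {\rm mult}\,(\tX,\tS) - \db\,{\rm ord}\,(x\circ\psi,0) \\
&= {\rm mult}\,(\fol,S) - \db\,{\rm ord}\,(x\circ\psi,0) \, .
\end{align*}
As $\tS$ is the strict transform of a separatrix through the origin of $C$, it meets the exceptional divisor $\{x=0\}$, so ${\rm ord}\,(x\circ\psi,0)\geq 1$; combined with $\db\geq 1$ this yields the strict inequality.

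The main obstacle is the bookkeeping of $\db$, and specifically the observation that the $\partial/\partial t$-component of $\tX$ carries the extra factor $1/x$ coming from the Jacobian of $\pi_z$, so that this component is only divisible by $x^{k-1}$ rather than $x^{k}$. This is precisely what forces $\db = d-1$ instead of $d$, and it is the step requiring care; one must in particular confirm that $Z$ does not vanish identically on $\{x=0\}$ so that $\db$ has been computed correctly, the dicritical case (where leading terms of the $\partial/\partial t$-component cancel) only increasing $\db$ and hence strengthening the inequality. The only other point to dispatch is that $\tS$ may instead land in the chart $(u,y,z)$, but that case is entirely symmetric.
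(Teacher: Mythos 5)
Your proposal is correct and follows exactly the route the paper intends: the paper states this proposition without a separate proof, as the analogue of Proposition~\ref{prop_multiplicity}, and your argument is precisely that analogue — invariance of ${\rm mult}\,(X,S)$ under the blow-up (Lemma~\ref{ordervf}), followed by subtracting ${\rm ord}\,(x^{\db}\circ\psi,0)$ where $\db$ is the vanishing order of $\pi_z^{\ast}X$ along the exceptional divisor. Your bookkeeping $\db \geq \min(k-1,l) = d-1 \geq 1$, matching the paper's definition $d=\min(k,l+1)$ of the order over the center, correctly supplies the detail the paper leaves to the reader.
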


Let us close this section with a first application of Proposition~\ref{prop_multiplicity} to the reduction of singular points
(a slightly more general discussion involving Proposition~\ref{prop_multiplicity_curves} appears in
Section~\ref{persistentnilpotent-Section}). Let $\fol$ be a holomorphic
foliation defined on a neighborhood of the origin of $\C^3$ and let $X$ be a holomorphic vector field representing $\fol$. Recall that
a singular point $p$ of $\fol$ is said to be {\it elementary}\, if the linear part of $X$ at $p$, $DX(p)$, has at least one eigenvalue
different from zero. In the sequel, whenever there is no risk of misunderstanding, we will say that a
singular point $p$ is {\it nilpotent} if the linear part of $X$ at $p$ is {\it nilpotent
and non-zero}. Along similar lines, the expression {\it degenerate singularity} will be used to refer to singularities
where the linear part of $X$ is actually equal to zero.

Consider now a singular foliation $\fol_0$ along with a formally smooth separatrix $S_0$ at the origin ($(0,0,0) \simeq p_0$).
Consider the blow-up $\fol_1$ of $\fol_0$ centered at the origin. The transform $S_1$ of $S_0$ selects a singular point $p_1$ of $\fol_1$
in the exceptional divisor $\Pi_1^{-1} (0,0,0)$. In fact, if the point $p_1 \in \Pi_1^{-1} (0,0,0)$ selected by $S_1$ were regular
for $\fol_1$, then $\Pi_1^{-1} (0,0,0)$ would not be invariant by $\fol_1$ and the formal separatrix $S_1$ (and hence $S$) would
actually be analytic and $\fol_1$ would be regular on a neighborhood of $p_1$: this situation is excluded in what follows.

Next let $\fol_2$ be the blow-up of $\fol_1$ at $p_1$. Again the transform $S_2$ of $S_1$ will select a singular point
$p_2 \in \Pi_2^{-1} (p_1)$ of $\fol_2$. The procedure is then continued by induction so as to produce a (infinite)
sequence of foliations $\fol_n$
\begin{equation}
\fol_0 \stackrel{\Pi_1}\longleftarrow
\fol_1  \stackrel{\Pi_2}\longleftarrow \cdots
\stackrel{\Pi_n}\longleftarrow
\fol_n  \stackrel{\Pi_{n+1}}\longleftarrow \cdots \label{resolutionsequence-example1}
\end{equation}
along with singular points $p_n$ and formal separatrices $S_n$.

\begin{lemma}\label{lemma_nilpotent}
Consider a sequence of foliations $\fol_n$ as in~(\ref{resolutionsequence-example1}) along with a sequence of
formal separatrices $S_n$ and singular points $p_n$.
Assume that for every $n \in \N$, $p_n$ is not an elementary singular point of $\fol_n$. Then there exists $n_0 \in \N$
such that $p_n$ is nilpotent (non-zero) singularity of $\fol_n$ for every $n \geq n_0$.
\end{lemma}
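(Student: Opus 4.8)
The plan is to use the multiplicity ${\rm mult}(\fol_n, S_n)$ as a quantity that is non-increasing under blow-up and strictly decreasing at every non-elementary point of order at least~$2$, and to combine this with the observation that the non-elementary hypothesis leaves only two possibilities for each $p_n$: either $\fol_n$ has order exactly~$1$ at $p_n$ (so $p_n$ is nilpotent) or $\fol_n$ has order at least~$2$ at $p_n$. Concretely, let $X_n$ represent $\fol_n$ near $p_n$. Since $p_n$ is not elementary, every eigenvalue of $DX_n(p_n)$ vanishes and hence $DX_n(p_n)$ is nilpotent; this linear part is non-zero exactly when $\fol_n$ has order~$1$ at $p_n$, in which case $p_n$ is a nilpotent singularity by definition. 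Otherwise $DX_n(p_n)=0$ and $\fol_n$ has order at least~$2$ at $p_n$ (the order is never~$0$ at a singular point). Thus it suffices to prove that only finitely many indices $n$ yield a point $p_n$ of order at least~$2$.

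Next I would exploit the multiplicity computation of Section~2. Set $m_n = {\rm mult}(\fol_n, S_n)$; since $S_n$ is a separatrix of $\fol_n$ it is not contained in the singular set, so $m_n$ is a well-defined positive integer for every $n$ (Definition~\ref{orderalongseparatrix}). Here $S_{n+1}$ is the strict transform of $S_n$ and $p_{n+1}$ is the singular point it selects. The identity established just before Proposition~\ref{prop_multiplicity}, namely ${\rm mult}(\tilf, \tS) = {\rm mult}(\fol, S) - {\rm ord}(z^{\alpha} \circ \varphi, 0)$ with ${\rm ord}(z^{\alpha} \circ \varphi, 0) \geq 0$, shows that $m_{n+1} \leq m_n$ for every $n$. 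Moreover, whenever $\fol_n$ has order at least~$2$ at $p_n$, Proposition~\ref{prop_multiplicity} sharpens this to the strict inequality $m_{n+1} < m_n$.

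To conclude, let $A$ be the set of indices $n$ for which $\fol_n$ has order at least~$2$ at $p_n$. By the previous paragraph $(m_n)$ is a non-increasing sequence of positive integers which drops strictly at each $n \in A$. If $A$ were infinite, then $(m_n)$ would decrease below any bound, contradicting positivity. Hence $A$ is finite, and there exists $n_0 \in \N$ such that $\fol_n$ has order exactly~$1$ at $p_n$ for all $n \geq n_0$. For each such $n$ the nilpotent linear part $DX_n(p_n)$ is then non-zero, so $p_n$ is a nilpotent singularity, which is exactly the assertion.

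The argument is essentially driven by Proposition~\ref{prop_multiplicity} together with the non-increasing behaviour of the multiplicity, and I do not expect a genuine obstacle. The only point requiring care is the elementary case analysis on the order of $\fol_n$ at $p_n$: one must exclude order~$0$ (impossible since $p_n$ is singular) and correctly match order~$1$ with a non-zero nilpotent linear part, so that the case to be controlled by the descent is precisely order at least~$2$, where Proposition~\ref{prop_multiplicity} applies.
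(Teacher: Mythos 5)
Your proposal is correct and follows essentially the same route as the paper: both arguments rest on Proposition~\ref{prop_multiplicity}, observing that the multiplicity ${\rm mult}(\fol_n,S_n)$ is a non-increasing sequence of positive integers that drops strictly whenever $\fol_n$ has order at least~$2$ at $p_n$, so it stabilizes and from that index on the order is~$1$, forcing a non-zero nilpotent linear part at the non-elementary points $p_n$. Your explicit case analysis on the order (excluding order~$0$ and matching order~$1$ with nilpotent non-zero linear part) is exactly the dichotomy the paper uses implicitly.
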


\begin{proof}
The statement follows from Proposition~\ref{prop_multiplicity}. Indeed, by assumption, $p_n$ is not an elementary singular
point of $\fol_n$ (for every $n \in \N$). Assume, in addition, that $\fol_1$ is not nilpotent at $p_1$. This means
that the order of $\fol_1$ at $p_1$ is at least~$2$ so that the multiplicity of $\fol_2$ along $S_2$ is strictly
smaller than the multiplicity of $\fol_1$ along $S_1$. If $\fol_2$ is again non-nilpotent at $p_2$, then the order
of $\fol_2$ at $p_2$ is again at least~$2$. There follows that the multiplicity of $\fol_3$ along $S_3$ is strictly
smaller than the multiplicity of $\fol_2$ along $S_2$. When the procedure is continued, the
multiplicity of $\fol_{n+1}$ along $S_{n+1}$ will be strictly smaller than the multiplicity of $\fol_n$ along
$S_n$ whenever $p_n$ is not a nilpotent singularity of $\tilf_n$. Hence we
obtain a decreasing - though not necessarily strictly decreasing - sequence of non-negative integers. This sequence
must eventually become constant. If $n_0$ is the index
for which the sequence is constant for $n \geq n_0$, then Proposition~\ref{prop_multiplicity} ensures that
$\fol_n$ has order~$1$ at $p_n$ for every $n \geq n_0$. Since by assumption $p_n$ is not an elementary singularity
of $\fol_n$, we conclude that $p_n$ must be a nilpotent singularity of $\fol_n$ for $n \geq n_0$. The lemma
is proved.
\end{proof}

\section{On persistent nilpotent singularities}\label{persistentnilpotent-Section}

Throughout this section by {\it nilpotent singularity}\, it is always meant a singular foliation whose linear part
is {\it nilpotent and different from zero}.

Our purpose is to discuss nilpotent singular points that are persistent under blow-up transformations
and this will lead to the two main results of the section, namely Theorem~\ref{normalformpersistentnilpotent} and
Theorem~\ref{Microlocalversion_TheoremA}.
As mentioned, Theorem~\ref{normalformpersistentnilpotent} generalizes, in a relatively straightforward way,
the celebrated examples of vector fields obtained by Sancho and Sanz. Also, they are related to the $\Z /2\Z$-orbifold
singular points discussed in the last section of \cite{danielMcquillan} (see Section~\ref{Reviewliterature}).

First let us make it clear what is meant by {\it persistent nilpotent singularity}.
In the sequel, the centers
of the blow-ups maps are always {\it contained in the singular set of the foliation}. Moreover, they are
either a single point or a smooth analytic curve. The reader is also reminded that all blow-ups are assumed to be standard.

Let $\fol_0$ denote a singular foliation along with an
irreducible formal separatrix $S_0$ at a chosen singular point $p_0$. Consider a sequence of blow-ups and
transformed foliations which is obtained as follows. First, we choose a center $C_0$ with $p_0 \in C_0$ which is
contained in the singular set of $\fol_0$. Then we blow-up $\fol_0$ with center $C_0$ and let $\fol_1$ denote the blown-up
foliation. The transform $S_1$ of $S_0$ selects a point $p_1$ in the exceptional divisor
$\Pi_1^{-1} (C_0)$. In the case where $p_1$ is regular for $\fol_1$ the sequence of blow-ups stops at this level.
Otherwise, $p_1$ is a singular point for $\fol_1$ and another blow-up will be performed.
Let $C_1$ be a center contained in the singular set of $\fol_1$ and such that $p_1 \in C_1$.
The blow-up of $\fol_1$ with center $C_1$ leads to a foliation $\fol_2$.
Again the transform $S_2$ of $S_1$ will select a point $p_2 \in \Pi_2^{-1} (C_1)$. If $p_2$ is a regular point for
$\fol_2$, then the sequence of blow-ups stops. Otherwise we consider the blow-up of $\fol_2$ with a center $C_2$ passing through
$p_2$. The procedure is then continued by induction so as to produce a sequence of foliations $\fol_n$
\begin{equation}
\fol_0 \stackrel{\Pi_1}\longleftarrow
\fol_1  \stackrel{\Pi_2}\longleftarrow \cdots
\stackrel{\Pi_n}\longleftarrow
\fol_n  \stackrel{\Pi_{n+1}}\longleftarrow \cdots \label{resolutionsequence-example2}
\end{equation}
along with singular points $p_n$ and formal separatrices $S_n$. The mentioned sequence is finite if there exists $n \in \N$
such that $p_n$ is regular for $\fol_n$. A sequence of points $p_n$ obtained from a formal separatrix $S_0$ as
above is often called a {\it sequence of infinitely near singular points}.

\begin{defi}
\label{definitionpersistentnilpotent}
With the preceding notation, assume that $p_0$ is a nilpotent singular point for $\fol_0$.
The point $p_0$ is said to be a persistent nilpotent singularity if there exists a formal separatrix
$S_0$ of $\fol_0$ such that for every sequence of blowing-ups as in~(\ref{resolutionsequence-example2})
the following conditions are satisfied:
\begin{itemize}
  \item[($\imath$)] The singular points $p_n$ (selected by the transformed separatrices $S_n$) are all
  nilpotent singular points for the corresponding foliations;

  \item[($\imath \imath$)] The multiplicity ${\rm mult} (\fol_n, S_n)$ of $\fol_n$ along $S_n$ does not depend on~$n$.
\end{itemize}
\end{defi}

\begin{rem}
\label{nilpotentarisingatdicriticaldivisor}
{\rm Note that Condition~($\imath$) implies Condition~($\imath \imath$) if the blow-up is centered at the
nilpotent singular point in question. In the case of blow-ups centered at smooth curves, however,
it is possible to have a strictly smaller multiplicity, cf. the proof of Lemma~\ref{furthersimplification-2}.}
\end{rem}

In view of Seidenberg's theorem, persistent nilpotent singularities do not exist in dimension~$2$.
In dimension~$3$, however, the examples of singularities that cannot be resolved by blow-ups with invariant
centers found by Sancho and Sanz satisfy the conditions in Definition~\ref{definitionpersistentnilpotent}.
In fact, Sancho and Sanz have shown
that the foliation associated with the vector field
\[
X = x\left( x \frac{\partial}{\partial x} - \alpha y \frac{\partial}{\partial y}  - \beta z \frac{\partial}{\partial z}\right)
+ xz \frac{\partial}{\partial y} + (y-\lambda x) \frac{\partial}{\partial z}
\]
possesses a strictly formal separatrix $S =S_0$ such that for every sequence of blowing-ups
as above, the corresponding sequence of infinitely near singular points consists of nilpotent singularities.
Furthermore the foliations $\fol_n$ also
satisfy ${\rm mult}\, (\fol_n, S_n) =2$ for every $n \in \N$, where $S_n$ stands for the transform of $S$.
The set of persistent nilpotent singular points
is thus non-empty. Most of this section will be devoted to the characterization of these persistent singularities
and the final result will be summarized by Theorem~\ref{normalformpersistentnilpotent}. We begin with
the following proposition:

\begin{prop}\label{prop_normal form}
Let $\fol$ be a singular holomorphic foliation on $(\C^3,0)$ and assume that the origin is a persistent nilpotent singularity
of $\fol$. Then, up to finitely many one-point blow-us, there exist local coordinates and a
holomorphic vector field $X$ representing $\fol$ and having the form
\begin{equation}\label{nilpotent_normal_form_1}
(y + f(x,y,z)) \frac{\partial}{\partial x} + g(x,y,z) \frac{\partial}{\partial y} + z^n \frac{\partial}{\partial z}
\end{equation}
for some $n \geq 2 \in \N$ and some holomorphic functions $f$ and $g$ of order at least~$2$ at the origin. Moreover
the orders of the functions $z \mapsto f(0,0,z)$ and of $z\mapsto g(0,0,z)$ can be made arbitrarily large (in
particular greater than $2n$).
\end{prop}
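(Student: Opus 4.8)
The plan is to start from a representative vector field $X$ with nonzero nilpotent linear part and to simplify it step by step, letting the formal separatrix $S$ and the persistence hypothesis dictate the shape that survives under blow-ups. First I would put $DX(0)$ into Jordan form; in dimension three this leaves only two possibilities, namely a single block of size three (linear part $y\,{\partial}/{\partial x} + z\,{\partial}/{\partial y}$) and a block of size two plus a trivial block (linear part $y\,{\partial}/{\partial x}$). Since a nilpotent linear part is never a multiple of the radial vector field, every one-point blow-up at such a point is non-dicritical; hence the blown-up point has order~$1$ again, and by the remark following Definition~\ref{definitionpersistentnilpotent} the multiplicity is automatically preserved along point blow-ups, so persistence amounts exactly to saying that each $p_n$ stays nilpotent. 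Blowing up finitely many times and invoking Lemma~\ref{ordervf} (which keeps the multiplicity fixed) I may also assume that $S$ is formally smooth, hence tangent to a well-defined line.

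The first real task is to funnel both linear models into the configuration underlying the target normal form: a rank-one linear part $y\,{\partial}/{\partial x}$ together with a separatrix tangent to the $z$-axis. The guiding principle is that a smooth invariant formal curve is tangent, to leading order, to an eigendirection of the linear part. For $y\,{\partial}/{\partial x}$ the whole kernel (the $xz$-plane) is admissible and, using persistence to exclude the direction lying in the image of the nilpotent part, $S$ may be aligned with the $z$-axis by a linear change; for $y\,{\partial}/{\partial x} + z\,{\partial}/{\partial y}$ only the $x$-axis is admissible. I would therefore dispose of the size-three block by a single blow-up of the origin read in the chart adapted to the $x$-axis: a direct computation (using that invariance forces the $z$-component of the parametrization of $S$ to have order at least~$3$) shows that the point selected by the transform of $S$ carries a rank-one nilpotent linear part, with the transformed separatrix tangent to a kernel axis, and persistence guarantees it is again nilpotent. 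After finitely many such reductions I may write
\[
X = (y + f)\,\frac{\partial}{\partial x} + g\,\frac{\partial}{\partial y} + H\,\frac{\partial}{\partial z},
\]
with $f,g,H$ of order at least~$2$ and $S$ tangent to the $z$-axis.

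The core step is then to normalize the third component. Parametrizing $S$ by $\varphi(t)=(a(t),b(t),t)$ with $a,b$ of order at least~$2$, the computation in Definition~\ref{orderalongseparatrix} gives ${\rm ord}(H\circ\varphi,0) = {\rm mult}(\fol,S)=:n$, and since $\varphi(t)\equiv(0,0,t)$ to leading order this forces ${\rm ord}(H(0,0,z),0)=n$. Were $n=1$ the linear part would contain a term $z\,{\partial}/{\partial z}$ and the point would be elementary; hence $n\ge 2$. Solving the equation $X\cdot Z = Z^{n}$ (where $X\cdot Z$ is the derivative of the function $Z$ along $X$) with $Z = z + \cdots$ produces a new third coordinate along which the $\partial/\partial Z$-component is exactly $Z^{n}$, while keeping the first two coordinates equal to $x$ and $y$; re-expressing the first two components in $(x,y,Z)$ preserves the linear part $y\,{\partial}/{\partial x}$ and the order bounds. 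Renaming $Z$ as $z$ we reach the announced form
\[
(y + f)\,\frac{\partial}{\partial x} + g\,\frac{\partial}{\partial y} + z^{n}\,\frac{\partial}{\partial z},\qquad n\ge 2 .
\]
Showing that this straightening of the third component can be carried out analytically while simultaneously maintaining the nilpotent structure and the persistence inherited from $S$ is where I expect the genuine difficulty to lie; the Jordan normalization and the size-three reduction are comparatively routine once the invariance identities are in hand.

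The arbitrary-order refinement is, by contrast, the clean part and needs no further blow-ups. The point is that $f(0,0,z)$ and $g(0,0,z)$ measure exactly the failure of the $z$-axis to coincide with the (smooth, but only formal) separatrix $S$: writing out the invariance of $\varphi(t)=(a(t),b(t),t)$ yields the identities $b(t)+f(a(t),b(t),t)-t^{n}a'(t)\equiv 0$ and $g(a(t),b(t),t)-t^{n}b'(t)\equiv 0$. Performing the analytic coordinate change $\tilde x = x - a_{N}(z)$, $\tilde y = y - b_{N}(z)$, where $a_{N},b_{N}$ are the truncations of $a,b$ at order $N$, leaves the normal form above intact (it is tangent to the identity and fixes the third component $z^{n}$) and, by the two identities, turns both restrictions into series of order larger than~$N$. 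Choosing $N>2n$ gives the final clause of the statement.
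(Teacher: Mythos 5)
Your outline agrees with the paper's proof at the beginning (reduce to a smooth formal separatrix, get a rank-one nilpotent linear part $y\,\partial/\partial x$, and obtain the arbitrarily-high orders of $f(0,0,z)$ and $g(0,0,z)$ by truncating the formal change of coordinates that straightens $S$ onto the $z$-axis -- this last step is exactly the paper's $H_m$/$Y_m$ device). But the step you yourself flag as ``where the genuine difficulty lies'' is a real gap, and in fact the proposed route is obstructed. You want a new third coordinate $Z=z+\cdots$ solving $X\cdot Z=Z^n$. Restrict this equation to the formal separatrix: writing $\varphi^{\ast}(X|_S)=g(T)\,\partial/\partial T$ with ${\rm ord}(g)=n={\rm mult}(\fol,S)$ as in Definition~\ref{orderalongseparatrix}, the function $w=Z\circ\varphi$ would have to satisfy $g(T)\,w'(T)=w(T)^n$, i.e.\ it would conjugate $g(T)\,\partial/\partial T$ to $w^n\,\partial/\partial w$. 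A one-dimensional formal vector field of order $n\geq 2$ carries, besides $n$, a residue-type formal invariant, and it is formally conjugate to $w^n\partial/\partial w$ only when that invariant vanishes. So for a fixed representative $X$ the equation $X\cdot Z=Z^n$ generically has no solution even at the level of formal series; and even after correcting the target to $Z^n$ times a unit (and dividing $X$ by that unit, which is what actually kills the residue), you would still face the analytic solvability of a singular first-order PDE at a point where the linear part is nilpotent -- none of which is addressed.

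The paper gets the $z^n$ third component by an entirely different and much softer mechanism, and this is the real reason the statement reads ``up to finitely many one-point blow-ups'': the extra blow-ups are not only for smoothing $S$, they are the engine of the normalization. One additional one-point blow-up produces a smooth \emph{analytic invariant surface} $E=\{z=0\}$ transverse to $S$ (the exceptional divisor is automatically invariant because a non-zero nilpotent linear part is never radial); invariance of $E$ forces the third component to be $C=z^n+g(z)+xP+yQ$ with $P,Q$ divisible by $z$ and $g$ divisible by $z^{n+1}$. An explicit computation then shows that each further one-point blow-up keeps ${\rm ord}\,\widetilde C(0,0,z)={\rm ord}\,C(0,0,z)$ while \emph{strictly increasing} the power of $z$ dividing $\widetilde C(u,v,z)-z^n-g(z)$, and only decreases the orders of $A(0,0,z)$, $B(0,0,z)$ by one each time. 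Hence after at most $n$ such blow-ups the third component equals $z^n I(u,v,z)$ with $I(0,0,0)\neq 0$, and dividing the representative vector field by the unit $I$ yields exactly the form~(\ref{nilpotent_normal_form_1}) -- no PDE is ever solved. If you want to keep your architecture, you should replace the $X\cdot Z=Z^n$ step by this invariant-surface-plus-blow-up argument (or supply a genuinely new proof of solvability after rescaling by a unit, which would be harder than the proposition itself). The invariant surface $\{z=0\}$ is also not an optional convenience: it is the ``exceptional divisor locally contained in $\{z=0\}$'' that reappears in the statement of Theorem~D.
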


\begin{proof}
Let $\fol$ be a nilpotent persistent singular point and denote by $S$
a formal separatrix giving rise to a sequence of infinitely near singular points as in
Definition~\ref{definitionpersistentnilpotent}.
Up to finitely many one-point blow-ups the formal separatrix $S$ can be assumed to be
smooth in the formal sense. Up to performing an additional one-point blow-up, we may also
assume that $\fol$ admits an analytic smooth {\it invariant surface} which is, in addition,
transverse to the formal separatrix $S$. In fact, the resulting exceptional divisor is necessarily
invariant under the transformed foliation since the previous singular point is nilpotent and non-zero
(recall that the exceptional divisor is not invariant by the blown-up foliation if and only if the first
non-zero homogeneous component is a multiple of the radial vector field).

Note also that even if we consider the blow-up of $\fol$ centered at a smooth analytic curve contained
in the singular set of the foliation (rather than the one-point blow-up)
the resulting exceptional divisor is still invariant by the transformed foliation. The argument is similar
to the preceding one: if this component were not invariant, then the first non-zero homogeneous component of the foliation
{\it with respect to this curve}\,
would be a multiple of vector field $x\partx + y\party$ at generic points in this center. Again this cannot happen
since the origin is a nilpotent singular point.
Finally, denoting by $E_n$ the total exceptional divisor associated with the birational transformation $\Pi_{\circ n} =
\Pi_n \circ \cdots \circ \Pi_2 \circ \Pi_1$. The argument above also applies to ensure that every irreducible component of the
exceptional divisor is invariant by the corresponding foliation $\fol_n$.

Summarizing, we can assume without loss of generality that all of the following holds:
\begin{itemize}
\item the formal separatrix $S$ is smooth;
\item $\fol$ possesses a (smooth) analytic invariant surface $E$;
\item the formal separatrix $S$ is transverse to $E$ (in the formal sense).
\end{itemize}

In view of the preceding, consider
local coordinates $(x,y,z)$ around $p_0$ where the smooth invariant surface $E$ is given by $\{z=0\}$.
Denote by $H$ a formal change of coordinates preserving $\{z=0\}$ as
invariant surface and taking the formal separatrix $S$ to the $z$-axis given by $\{x=0, \, y=0\}$. Since the vector
field obtained by conjugating $X$ through $H$ is merely formal, let $H_m$ denote
the polynomial change of coordinates obtained by truncating $H$ at order $m$. We then set
\[
Y_m = (DH_m)^{-1} (X \circ H_m) \, .
\]
The map $H_m$ is holomorphic and so is the vector field $Y_m$. Denote by $\fol_m$
the foliation associated with $Y_m$. The foliation $\fol_m$ clearly admits a formal separatrix $S_m$ whose order
of tangency with the $z$-axis goes to infinity with $m$ and thus can be assumed
arbitrarily large.

Under the above conditions, the vector field $Y_m$ has the form
\[
Y_m = A(x,y,z) \frac{\partial }{\partial x} + B(x,y,z) \frac{\partial }{\partial y} + C(x,y,z) \frac{\partial }{\partial z}
\]
with
\begin{itemize}
\item[(1)] $C(x,y,z) = z^n + g(z) + xP(x,y,z) + yQ(x,y,z)$, for some $n \in \N$, some holomorphic functions $P$ and $Q$ divisible
by $z$ and some holomorphic function $g$ divisible by $z^{n+1}$;
\item[(2)] $A(0,0,z)$ and $B(0,0,z)$ having order arbitrarily large, say greater than $2n$.
\end{itemize}
Note that the value of $n = {\rm ord} \, (C(0,0,z)) \geq 2$ depends only on the initial foliation $\fol$ and not on the choice
of $m \in \N^{\ast}$. In fact, the value of~$n$ is nothing but the multiplicity of $\fol$ along $S$ and hence
it is invariant by (formal) changes of coordinates. The orders of $A(0,0,z)$
and of $B(0,0,z)$ depend however on $m$. Note that the orders in question are related to the contact
order between $S_m$ and the $z$-axis. In particular these orders can be made arbitrarily large as well.

Naturally the foliation $\fol$ and $\fol_m$ are both nilpotent at the origin. Next we have:

\noindent {\it Claim}. Up to a linear change of coordinates in the variables $x,y$,
the linear part of $Y_m$ is given by $y \partial /\partial x$.

\noindent {\it Proof of the Claim}. The formal Puiseux parametrization $\varphi$ of $S_m$ has
the form $\varphi(T) = (T^r + {\rm h.o.t.}\, , \, T^s + {\rm h.o.t.}\, , \, T)$ where the integers
$r$ and $s$ are related to the contact order between $S_m$ and the $z$-axis. In particular both
$r$ and $s$ can be made arbitrarily large.
Now it is clear that both $\partial A/\partial z$ and $\partial B/\partial z$ must vanish at the origin
provided that $\varphi$ is invariant by the vector field $Y_m$. On the other hand, $\partial C/\partial x$ and
$\partial C/\partial y$ are both zero at the origin since $P$ and $Q$ are divisible by~$z$ (cf. condition~(1) above).
It is also clear that $\partial C/\partial z$ equals zero at the origin since $n \geq 2$. Thus both
the third line and the third column in the matrix representing the linear part of $Y_m$ at the origin
are entirely constituted by zeros. Using again the fact that
this matrix is nilpotent, the standard Jordan form ensures that a linear change of coordinates involving only the
variables $x,y$ brings the linear part of $Y_m$ to the form $y \partial /\partial x$. It is also immediate to check
that this linear change of coordinates does not affect the previously established conditions and/or normal forms.
The claim is proved.\qed

\smallbreak

Consider now the blow-up of $\fol$ centered at the origin. In coordinates $(u,v,z)$ where $(x,y,z) = (uz,vz,z)$, the
transform $\widetilde{Y}_m$ of $Y_m$ by the mentioned blow-up is given by
\[
\widetilde{Y}_m = \widetilde{A}(u,v,z) \frac{\partial }{\partial x} + \widetilde{B}(u,v,z) \frac{\partial }{\partial y} +
\widetilde{C}(u,v,z) \frac{\partial }{\partial z}
\]
where
$$
\widetilde{A}(u,v,z) = \frac{A(uz,vz,z) - uC(uz,vz,z)}{z} \; \; \; {\rm and} \; \; \;
\widetilde{B}(u,v,z) = \frac{B(uz,vz,z) - vC(uz,vz,z)}{z}
$$
and where $\widetilde{C}(u,v,z) = C(uz,vz,z)$.
In particular $\tilf_m$ is nilpotent at the origin, with the same linear part as $\fol_m$.
Furthermore the above formulas easily imply all of the following:
\begin{itemize}
\item[(a)] the order of $\widetilde{C}(0,0,z)$ coincides with the order of $C(0,0,z)$;
\item[(b)] the maximal power of $z$ dividing $\widetilde{C}(u,v,z) - z^n - g(z)$ is strictly greater than the maximal
power of $z$ dividing $C(x,y,z) - z^n - g(z)$;
\item[(c)] ${\rm ord} \, \widetilde{A}(0,0,z) = {\rm ord} \, A(0,0,z) - 1$ and
${\rm ord} \, \widetilde{B}(0,0,z) = {\rm ord} \, B(0,0,z) - 1$.
\end{itemize}
Also the transform $\tS_m$ of $S_m$ is a formal separatrix tangent to the $z$-axis. The tangency order is still
large (at least $2n$) since the order in question is related to the orders of $\widetilde{A}(0,0,z)$ and of
$\widetilde{B}(0,0,z)$ and these orders fall only by one unity (item~(c)). In turn, the multiplicity of $\tilf_m$
along $\tS_m$ coincides with the multiplicity of $\fol_m$ along $S_m$ (from item~(a)). Finally the function $C$ was
divisible by $z$. Now, according to item~(b), $\widetilde{C}$ is divisible by $z^2$. In fact, item~(b) ensures
that after at most $n$~one-point blow-ups, the corresponding singular point is still a nilpotent singularity
for which the component of the representative
vector field in the direction transverse to the exceptional divisor (given in local coordinates by $\{z=0\}$) has
the form $z^n I (u,v,z)$ where $I (u,v,z)$ is a holomorphic function satisfying $I(0,0,0) \ne 0$.
Dividing all the components of the vector field in question by $I$ then yields another representative vector field
with the desired normal form. The proposition is proved.
\end{proof}

An additional simplification can be made on the normal form~(\ref{nilpotent_normal_form_1}) of
Proposition~\ref{prop_normal form}. Namely:

\begin{lemma}
\label{furthersimplification-1}
Up to performing an one-point blow-up, the functions $f$ and $g$ in~(\ref{nilpotent_normal_form_1}) become divisible by $z$.
\end{lemma}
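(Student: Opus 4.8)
The plan is to perform the single one-point blow-up centered at the origin and to read off the transformed vector field in the chart retaining the $z$-direction, namely $(u,v,z)$ with $(x,y,z) = (uz,vz,z)$. Since the separatrix $S$ furnished by Proposition~\ref{prop_normal form} is tangent to the $z$-axis, its transform will pass through the origin of this chart (the quotients $u = x/z$ and $v = y/z$ tend to $0$ along $S$), so the relevant singular point to analyse is exactly $(u,v,z) = (0,0,0)$.

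Writing $X = (y+f)\partx + g\party + z^n\partz$ as in~(\ref{nilpotent_normal_form_1}), I would apply the blow-up formulas already recorded in the proof of Proposition~\ref{prop_normal form} to obtain the transformed components
\begin{align*}
\widetilde{A}(u,v,z) &= v + \frac{f(uz,vz,z)}{z} - u z^{n-1}, \\
\widetilde{B}(u,v,z) &= \frac{g(uz,vz,z)}{z} - v z^{n-1}, \\
\widetilde{C}(u,v,z) &= z^n.
\end{align*}
Everything then reduces to one elementary observation: since $f$ and $g$ have order at least~$2$, the substitution $x=uz$, $y=vz$ sends each of their monomials to a monomial divisible by $z^2$; hence $f(uz,vz,z)$ and $g(uz,vz,z)$ are divisible by $z^2$ and the quotients $f(uz,vz,z)/z$, $g(uz,vz,z)/z$ are divisible by~$z$. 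Because $n \geq 2$, the terms $uz^{n-1}$ and $vz^{n-1}$ are divisible by $z$ as well. Thus $\widetilde{A} = v + \widetilde{f}$ and $\widetilde{B} = \widetilde{g}$ with $\widetilde{f}$, $\widetilde{g}$ both divisible by~$z$, which after renaming $(u,v)$ as $(x,y)$ is precisely the claim.

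To finish I would check that the transform is genuinely in the normal form~(\ref{nilpotent_normal_form_1}). The singular set of $\widetilde{X}$ is $\{z = v = 0\}$, of codimension~$2$, so no common factor needs to be removed and $\widetilde{X}$ itself represents $\tilf$. The hard part---indeed the only point that uses the full strength of Proposition~\ref{prop_normal form}---will be to confirm that $\widetilde{f}$ and $\widetilde{g}$ still have order at least~$2$, so that the linear part stays exactly $y\partx$ and the singularity remains nilpotent of the prescribed shape. The threat here is a pure power $z^2$ in $f$, whose image $z^2/z = z$ would create an unwanted linear term; this is excluded precisely because the orders of $z \mapsto f(0,0,z)$ and $z \mapsto g(0,0,z)$ were arranged to exceed~$2n$. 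A short degree count on the surviving monomials (a term of $f(uz,vz,z)/z$ coming from $x^ay^bz^c$ with $a+b\geq 1$ has degree $2(a+b)+c-1 \geq 2$) then settles that $\widetilde{f}$ and $\widetilde{g}$ have order at least~$2$, completing the verification.
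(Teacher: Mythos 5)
Your proof is correct and follows essentially the same route as the paper: blow up at the origin, compute the transform in the chart $(x,y,z)=(uz,vz,z)$, and observe that $f(uz,vz,z)/z$, $g(uz,vz,z)/z$, $uz^{n-1}$ and $vz^{n-1}$ are all divisible by $z$ because $f$, $g$ have order at least~$2$ and $n\geq 2$. Your additional verification that the new functions retain order at least~$2$ (using that the orders of $f(0,0,z)$ and $g(0,0,z)$ exceed $2n$, so no pure $z^2$ term can produce an unwanted linear term) is left implicit in the paper but is a sound and worthwhile check.
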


\begin{proof}
Again let $\pi$ denote the blow-up map centered at the origin
and set $\tX = \pi^{\ast} X$. In the above mentioned affine coordinates $(u,v,z)$, we have
$$
\tX = (y + \widetilde{f}(x,y,z)) \frac{\partial}{\partial x} + \widetilde{g}(x,y,z) \frac{\partial}{\partial y} + z^n \frac{\partial}{\partial z} \, ,
$$
where $\widetilde{f} = (f(uz,vz,z) - uz^n)/z$ and $\widetilde{g} = (g(uz,vz,z) - vz^n)/z$.
The functions $\widetilde{f}, \, \widetilde{g}$ are thus divisible by $z$ since
$f$ and $g$ have order at least~$2$ at the origin. The lemma follows.
\end{proof}

Next, we are going to determine conditions on the functions $f$ and $g$ for the singular point $p_0 \simeq (0,0,0)$ to be a
persistent nilpotent singularity. Thus let $\fol$ be the foliation associated with a vector field
$X$ having the normal form provided by Proposition~\ref{prop_normal form} and by Lemma~\ref{furthersimplification-1}.
In other words, the vector field $X$ is given by
\begin{equation}\label{nilpotent_normal_form_2}
X = (y + zf(x,y,z)) \frac{\partial}{\partial x} + zg(x,y,z) \frac{\partial}{\partial y} + z^n
\frac{\partial}{\partial z} \, ,
\end{equation}
where $f$ and $g$ are holomorphic functions of order at least~$1$ and $n \in \N$, with $n \geq 2$.
Let $S$ be a smooth formal separatrix of $\fol$
giving rise to the persistent nilpotent singular point (see Definition~\ref{definitionpersistentnilpotent}). Without loss
of generality, the contact order~$k_0 (\geq 4)$ between $S$ and the $z$-axis is assumed to be large
and, similarly, $f(0,0,z)$ and $g(0,0,z)$ are assumed to have order bigger than $2n \geq 4$.

Note that the curve locally given by $\{y=0, z=0\}$ coincides with the singular set of $\fol$. We are then allowed to
perform either an one-point blow-up centered at $p_0 \simeq (0,0,0)$ or a blow-up centered at the mentioned curve.
Now we have:

\begin{lemma}
\label{furthersimplification-2}
Assume that $\fol$ has a persistent nilpotent singularity at the origin and let $S$ denote the
corresponding formal separatrix. Then $g(x,0,0) = \lambda x + {\rm h.o.t.}$ for some constant $\lambda \in \C^{\ast}$.
\end{lemma}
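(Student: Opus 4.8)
The plan is to assume, for contradiction, that the coefficient of $x$ in $g(x,0,0)$ vanishes, i.e. that $g(x,0,0) = {\rm h.o.t.}$ has order at least~$2$ in $x$, and to show that this forces the multiplicity along the separatrix to drop under a suitable blow-up, contradicting the persistence hypothesis. Concretely, I would work with the normal form~(\ref{nilpotent_normal_form_2}) and exploit the fact that the multiplicity ${\rm mult}(\fol, S)$ equals $n = {\rm ord}(C(0,0,z))$, which by Definition~\ref{definitionpersistentnilpotent}~($\imath\imath$) must stay constant along the whole sequence of infinitely near points. Persistence is precisely the rigidity I want to contradict: if I can exhibit one admissible blow-up (centered either at the origin or at the singular curve $\{y=0,z=0\}$) after which the nilpotent normal form degenerates or the multiplicity strictly decreases, then the origin cannot have been a persistent nilpotent singularity.

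**The blow-up computation.** First I would perform the blow-up centered at the invariant curve $C=\{y=z=0\}$, which is the singular set of $\fol$. In the affine chart $(x,v,z)$ with $y=vz$ (so that the blow-up map sends this chart into a neighborhood of the curve), I would substitute into~(\ref{nilpotent_normal_form_2}) and compute the transformed vector field, tracking in particular the component transverse to the new exceptional divisor and the resulting linear part. The key quantity to monitor is ${\rm mult}(\tilf, \tS)$, which by Proposition~\ref{prop_multiplicity_curves} is bounded by ${\rm mult}(\fol, S)$, with equality being exactly what persistence demands. I expect that the term $zg(x,y,z)\,\partial/\partial y$, after the substitution $y=vz$ and division by the appropriate power of $z$, contributes a term whose order in $z$ is governed precisely by the lowest-order behaviour of $g(x,0,0)$. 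If $g(x,0,0)$ has no linear term in $x$ — so $\partial g/\partial x(0,0,0)=0$ — then the $x$-dependence that is supposed to "feed back" and keep the singularity nilpotent at constant multiplicity is too weak, and I expect the transverse component to pick up an extra factor of $z$, lowering the order $n$.

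**The main obstacle.** The delicate point, and the step I expect to be the main obstacle, is to verify that when $g(x,0,0)=\lambda x + {\rm h.o.t.}$ with $\lambda\neq 0$ the blow-up genuinely preserves both the nilpotent structure and the multiplicity, whereas the vanishing of $\lambda$ breaks one of these. This requires a careful bookkeeping of how each of $f$ and $g$ (both divisible by~$z$ after Lemma~\ref{furthersimplification-1}, with $f(0,0,z)$ and $g(0,0,z)$ of large order) transforms, and of how the linear part evolves: I must confirm that the term $y\,\partial/\partial x$ survives as the nilpotent linear part while the $z^n\,\partial/\partial z$ component retains order exactly~$n$. The nonvanishing of the constant $\lambda$ should be exactly the algebraic condition ensuring that, in the transformed chart, the coefficient coupling $\partial/\partial y$ back into the $y\,\partial/\partial x$ Jordan block does not vanish, so that the $2\times 2$ nilpotent block in the $(x,y)$ variables is not deformed into something with a nonzero eigenvalue (making the point elementary, hence resolvable) nor flattened into a zero linear part accompanied by a multiplicity drop.

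**Concluding the argument.** Having isolated this dichotomy, I would argue as follows. If $g(x,0,0)$ has order $\geq 2$ in $x$, then after the blow-up centered at the curve (possibly followed by finitely many one-point blow-ups to re-normalize, as in the proof of Proposition~\ref{prop_normal form}) the selected infinitely near point $p_1$ either ceases to be nilpotent — violating condition~($\imath$) — or survives as nilpotent but with strictly smaller multiplicity along $\tS$, violating condition~($\imath\imath$). Since by hypothesis the origin \emph{is} a persistent nilpotent singularity with separatrix $S$, neither violation is permitted; this contradiction forces $g(x,0,0)=\lambda x + {\rm h.o.t.}$ with $\lambda \in \C^{\ast}$, proving the lemma. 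I would take care to note, in line with Remark~\ref{nilpotentarisingatdicriticaldivisor}, that for blow-ups centered at the curve a strictly smaller multiplicity is \emph{a priori} possible; the content of the lemma is that persistence rules out precisely the configuration in which this drop occurs, thereby pinning down the leading coefficient of $g$ along the $x$-axis.
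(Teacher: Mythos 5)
Your proposal follows essentially the same route as the paper: blow up along the singular curve $\{y=z=0\}$ in the chart $y=vz$, then use condition~($\imath\imath$) to rule out $g(x,0,0)\equiv 0$ (otherwise the transformed field vanishes identically on the exceptional divisor and the multiplicity along the separatrix drops) and condition~($\imath$) to rule out $\partial g/\partial x(0,0,0)=0$ (otherwise the linear part of the transformed field at the selected point is zero, hence not nilpotent in the paper's sense). One small imprecision: when $g(x,0,0)$ is nonzero but of order $\geq 2$ in $x$, the transformed field does \emph{not} pick up an extra factor of $z$ and the multiplicity is preserved --- the failure in that case is purely the vanishing of the linear part --- but the dichotomy stated in your concluding paragraph already covers this correctly.
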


\begin{proof}
Denote by $X_1$ (resp. $\fol_1$, $S_1$)
the transform of $X$ (resp. $\fol$, $S$) by the blow-up map $\pi_1$ centered on $\{y=0, z=0\}$. In local
coordinates $(x,v,z)$ where $y=vz$ the vector field $X_1$ is given by
\begin{equation}
X_1 = (vz + zf(x,vz,z)) \frac{\partial}{\partial x} + (g(x,vz,z) - vz^{n-1}) \frac{\partial}{\partial v} +
z^n \frac{\partial}{\partial z} \, . \label{VectorFieldX_1}
\end{equation}
Note that $g(x,0,0)$ does not vanish identically, otherwise $g(x,vz,z)$ would be divisible by~$z$ and hence the
vector field $X_1$ would vanish identically over the exceptional divisor (locally given by $\{ z=0\}$). This is impossible
since the multiplicity of $\fol_1$ along the transform of~$S$ would be strictly smaller than the multiplicity of
$\fol$ along $S$, hence contradicting Condition~($\imath \imath$) in Definition~(\ref{definitionpersistentnilpotent}).
In particular, the singular set of $\fol_1$ is locally given by $\{x=0, z=0\}$.

On the other hand, the formal separatrix $S_1$ is still tangent to the (transform of the)
$z$-axis since the contact between $S$ and the
(initial) $z$-axis was greater than~$2$ (in fact the contact between $S_1$ and the $z$-axis
is at least $k_0-1 \geq 3$). Hence, in the affine coordinates $(x,v,z)$, the foliation $\fol_1$ must
have a nilpotent singularity at the origin. Combining the conditions that $f(0,0,0) =0$, $n \geq 2$, and the fact
that the order of $g(0,0,z)$ is greater than~$2n$, the preceding
implies that $\partial g /\partial x$ does not vanish at the origin. In other words, $g(x,0,0) = \lambda x + {\rm h.o.t.}$
as desired.
\end{proof}

Note that the proof above also yields the following sort of converse to Lemma~\ref{furthersimplification-2}.

\begin{lemma}
\label{furthersimplification-3}
Keeping the preceding notation,
let $\fol$ be given by a vector field~$X$ as in~(\ref{nilpotent_normal_form_2}) and assume that $S$ is a formal
separatrix of $\fol$ with contact at least~$3$ with the $z$-axis. Assume that $g(x,0,0) = \lambda x + {\rm h.o.t.}$,
with $\lambda \neq 0$. Then the blow-up $\fol_1$ of $\fol$ centered at the curve $\{y=0, z=0\}$ has a nilpotent
singularity at the point of the exceptional divisor selected by $S_1$.\qed
\end{lemma}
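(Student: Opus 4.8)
The plan is to obtain the statement as a direct read-off of the blow-up computation already performed for Lemma~\ref{furthersimplification-2}, now run in the converse direction. First I would note that the proposed center is admissible: evaluating the vector field~(\ref{nilpotent_normal_form_2}) along the $x$-axis $\{y=0,z=0\}$ gives $y+zf = zg = z^n = 0$, so this curve lies in ${\rm Sing}\,(\fol)$. Blowing it up and passing to the chart $(x,v,z)$ with $y=vz$ reproduces exactly the field $X_1$ of~(\ref{VectorFieldX_1}). Before anything else I would confirm that $X_1$ itself represents $\fol_1$, i.e. that no power of $z$ must be factored out: restricting $X_1$ to the exceptional divisor $\{z=0\}$ yields $g(x,0,0)\,\partial/\partial v = (\lambda x + {\rm h.o.t.})\,\partial/\partial v$, which is not identically zero because $\lambda\neq 0$; hence $z$ is not a common factor and, since the $\partial/\partial z$-component $z^n$ vanishes on $\{z=0\}$, the divisor is also invariant.

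Next I would pin down the point $p_1$ selected by $S_1$. As $S$ is smooth and tangent to the $z$-axis, the direction it determines in the $(y,z)$-plane transverse to the center is the $z$-direction, which in the chart corresponds to $v=y/z=0$; together with $x=0$ this forces $p_1 \simeq (0,0,0)$. Concretely, writing $S$ as $z\mapsto (a(z),b(z),z)$ with ${\rm ord}\,a,\,{\rm ord}\,b \geq 3$ (contact at least~$3$), its transform is $z\mapsto (a(z),b(z)/z,z)$, whose $x$- and $v$-coordinates both vanish at $z=0$; the margin in the contact also guarantees that $S_1$ remains tangent to the $z$-axis. Evaluating the components of $X_1$ at the origin shows it vanishes there, so $p_1$ is indeed a singular point of $\fol_1$.

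It then remains to compute the linear part $DX_1(0)$. The $\partial/\partial x$-component $vz + zf(x,vz,z)$ and the $\partial/\partial z$-component $z^n$ have order $\geq 2$ (using ${\rm ord}\,f \geq 1$ and $n\geq 2$), so they contribute nothing linear; the only linear terms arise from $B_1 = g(x,vz,z) - vz^{n-1}$, whose linear part is $\lambda x + (\partial g/\partial z)(0)\,z$ (the $y$-linear term of $g$ becomes the quadratic $vz$, and $vz^{n-1}$ has order $\geq 2$). Thus $DX_1(0)$ has a single non-zero row, the $\partial/\partial v$ row, carrying $\lambda$ as its $x$-entry; its image lies in the $\partial/\partial v$-direction, which the same matrix annihilates, so its square vanishes. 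Hence $DX_1(0)$ is nilpotent, and non-zero precisely because $\lambda\neq 0$, giving the claimed nilpotent singularity.

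This argument is essentially a corollary of Lemma~\ref{furthersimplification-2}, so I do not anticipate a serious obstacle; the only points demanding care are the two bookkeeping checks — that $X_1$, rather than some $z^{-k}X_1$, is the representative of $\fol_1$, and that the separatrix genuinely selects the origin rather than another point of the exceptional fibre. The hypothesis $\lambda\neq 0$ settles the first and the contact condition settles the second.
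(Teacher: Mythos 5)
Your proof is correct and follows the same route as the paper, which states this lemma as an immediate by-product of the blow-up computation \eqref{VectorFieldX_1} carried out for Lemma~\ref{furthersimplification-2}; you have simply made explicit the two checks (that $X_1$ genuinely represents $\fol_1$, and that $S_1$ selects the origin of the chart) that the paper leaves implicit, and the linear-part computation is right.
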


Continuing the discussion of Lemma~\ref{furthersimplification-2}, consider again the vector field
$X_1$ in~(\ref{VectorFieldX_1}). We perform the blow-up centered at the curve locally given by $\{x=0, z=0\}$ - which
is contained in the the singular set of $\fol_1$ - and
denote by $X_2$ (resp. $\fol_2$, $S_2$) the transform of $X_1$ (resp. $\fol_1$, $S_1$).
In affine coordinates $(u,v,z)$ with $x = uz$ we have
\begin{equation}
X_2 = (v + f(uz,vz,z) - uz^{n-1}) \frac{\partial}{\partial x} + (g(uz,vz,z) - vz^{n-1}))
\frac{\partial}{\partial v} + z^n \frac{\partial}{\partial z} \, . \label{VectorFieldX_2}
\end{equation}
The contact of $S_2$ with the $z$-axis equals $k_0-1 \geq 3$ as follows from a simple computation
(cf. also Remark~\ref{WhynoonepointblowupConditions}). In particular the formal separatrix $S_2$ is still
based at the origin of the coordinates $(u,v,z)$ and it is tangent to the $z$-axis.
Since the above formula shows that $\fol_2$ has a nilpotent
singularity at the origin, we conclude:

\begin{lemma}
The foliation $\fol_2$ (resp. vector field $X_2$) has a nilpotent singularity at the point of the exceptional
divisor selected by $S_2$ (identified with the origin of the coordinates $(u,v,z)$).\qed
\end{lemma}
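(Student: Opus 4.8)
**The plan is to verify directly from the explicit formula~(\ref{VectorFieldX_2}) that $X_2$ has a nilpotent, non-zero linear part at the origin of the coordinates $(u,v,z)$.** The work is already essentially done by the way the vector field $X_2$ was produced: the preceding discussion has arranged that $S_2$ is still based at the origin and tangent to the $z$-axis, and that the contact order $k_0 - 1 \geq 3$ remains large. So the only thing left to check is that the linear part of $X_2$ at the origin is indeed nilpotent and non-vanishing, which then forces the selected point to be a nilpotent (rather than elementary or regular) singularity.

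First I would read off the linear part of $X_2$ from~(\ref{VectorFieldX_2}). Recall that $f$ and $g$ have order at least~$1$ and, more precisely, that $f(0,0,z)$ and $g(0,0,z)$ have order greater than $2n \geq 4$, while $g(x,0,0) = \lambda x + {\rm h.o.t.}$ with $\lambda \neq 0$ by Lemma~\ref{furthersimplification-2}. In the first component $v + f(uz,vz,z) - uz^{n-1}$, the linear term is precisely $v$, since $f(uz,vz,z)$ has order at least~$2$ in $(u,v,z)$ (each argument carries a factor $z$, and $f$ has order at least~$1$) and $uz^{n-1}$ has order $n \geq 2$. In the second component $g(uz,vz,z) - vz^{n-1}$, the substitution $g(uz,vz,z)$ again raises the order, so this component has no linear part; likewise the third component $z^n$ vanishes to order $n \geq 2$. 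Thus the linear part of $X_2$ is exactly $v\,\partial/\partial u$, which is nilpotent and non-zero. This is the same normal form $y\,\partial/\partial x$ encountered in the Claim inside the proof of Proposition~\ref{prop_normal form}, merely transported to the coordinates $(u,v,z)$.

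Having identified the linear part as $v\,\partial/\partial u$, I would conclude immediately: a vector field whose linear part is $v\,\partial/\partial u$ has a nilpotent non-zero linear part, hence the corresponding singular point is neither elementary nor regular but nilpotent. Since the formal separatrix $S_2$ is based at the origin of $(u,v,z)$ and selects this very point of the exceptional divisor, the point selected by $S_2$ is a nilpotent singularity of $\fol_2$, which is the assertion of the lemma.

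The only point that requires any care — and thus the main (though minor) obstacle — is confirming that no linear term is accidentally produced by the terms $-uz^{n-1}$ and $-vz^{n-1}$ in the two components, and by the composite $g(uz,vz,z)$. Here one simply uses $n \geq 2$, so $z^{n-1}$ already contributes order $\geq 1$ and $uz^{n-1}$, $vz^{n-1}$ contribute order $\geq 2$; and one uses that precomposing $f$ and $g$ with the blow-up substitution $(u,v,z)\mapsto(uz,vz,z)$ strictly increases the order, so that the linear contributions can come only from the bare terms $v$ and $z^n$. Since the arrangements guaranteeing that $S_2$ remains tangent to the $z$-axis and based at the origin were already established above, no further verification is needed and the lemma follows.
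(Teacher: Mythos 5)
Your proposal is correct and follows essentially the same route as the paper, which simply observes that Formula~(\ref{VectorFieldX_2}) exhibits a nilpotent linear part at the origin (namely $v\,\partial/\partial u$) and that $S_2$, being tangent to the $z$-axis with contact $k_0-1\geq 3$, selects precisely that point. Your extra care in checking that neither $f(uz,vz,z)$, $g(uz,vz,z)$ nor the terms $uz^{n-1}$, $vz^{n-1}$ contribute linear terms (using $n\geq 2$ together with the standing assumption that $f(0,0,z)$ and $g(0,0,z)$ have order greater than $2n$) is exactly the ``simple computation'' the paper leaves implicit.
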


The reader will also note that the singular set of $\fol_2$ is still locally given by the curve $\{v=0,z=0\}$ which
clearly contains the origin. As already mentioned, $S_2$ is tangent to the $z$-axis.

\begin{obs}
\label{WhynoonepointblowupConditions}
{\rm Let us point out that $X_2$ locally coincides with the transform of $X$ by the one-point blow-up centered at
the origin. In this sense, to include blow-ups centered at curves
in the current discussion does not lead to additional conditions
to have nilpotent singular points.}
\end{obs}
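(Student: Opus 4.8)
The plan is to establish the Remark by a direct computation whose essential point is a factorization of blow-up maps. First I would observe that, in the affine charts used above, the two successive curve blow-ups carrying $X$ to $X_2$ compose to a single affine chart of the one-point blow-up centered at the origin. Indeed, the first blow-up centered on $\{y=z=0\}$ is given in the relevant chart by $(x,v,z)\mapsto(x,vz,z)$, while the second, centered on $\{x=z=0\}$, is given by $(u,v,z)\mapsto(uz,v,z)$; hence their composition is
\[
(u,v,z)\longmapsto(uz,vz,z),
\]
which is precisely the chart $\pi(u,v,z)=(uz,vz,z)$ of the one-point blow-up at the origin already used in the proof of Proposition~\ref{prop_normal form}.

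Having identified the maps, I would then compute the transform of $X$ under $\pi$ directly and compare it with the expression~(\ref{VectorFieldX_2}) for $X_2$. Writing $X=F\,\partial/\partial x+G\,\partial/\partial y+H\,\partial/\partial z$ with $F=y+zf(x,y,z)$, $G=zg(x,y,z)$ and $H=z^n$, and using $u=x/z$, $v=y/z$, one has $X(u)=(F-uH)/z$, $X(v)=(G-vH)/z$ and $X(z)=H$. Substituting the explicit $F$, $G$, $H$ and simplifying yields
\[
\pi^{\ast}X=\bigl(v+f(uz,vz,z)-uz^{n-1}\bigr)\frac{\partial}{\partial u}+\bigl(g(uz,vz,z)-vz^{n-1}\bigr)\frac{\partial}{\partial v}+z^n\frac{\partial}{\partial z},
\]
which is exactly~(\ref{VectorFieldX_2}) up to the harmless relabelling of $\partial/\partial u$ as $\partial/\partial x$ already adopted in that formula.

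The one point deserving care — and the only genuine obstacle — will be to check that no power of $z$ has to be factored out at any stage, so that the \emph{strict} transforms, and not merely the blow-up maps, coincide. At each of the two curve blow-ups the component transverse to the exceptional divisor $\{z=0\}$ does not vanish identically on it: after the first blow-up this is precisely the assertion $g(x,0,0)\neq 0$ of Lemma~\ref{furthersimplification-2}, and the analogous property persists after the second. Consequently the transformed fields retain a singular set of codimension at least~$2$, so the division by a unit defining the strict transform is trivial at every step; the same holds for $\pi^{\ast}X$, since $X$ has nilpotent (hence order-one) linear part and its transverse component $v+\cdots$ is generically nonzero on $\{z=0\}$. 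Therefore $X_2=\pi^{\ast}X$ as germs of holomorphic vector fields.

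Finally, the second assertion of the Remark follows immediately. Since the two-step procedure based on curve centers produces literally the same germ $X_2$ as the single one-point blow-up, every condition extracted from the study of the nilpotent singularity under one-point blow-ups already accounts for the curve-centered case; admitting blow-ups with smooth-curve centers therefore imposes no additional constraints for the singular point to remain nilpotent.
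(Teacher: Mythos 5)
Your argument is correct and follows essentially the same route as the paper: the Remark is justified there precisely by the explicit formulas~(\ref{VectorFieldX_1}) and~(\ref{VectorFieldX_2}), whose derivation amounts to composing the two charts $(x,v,z)\mapsto(x,vz,z)$ and $(u,v,z)\mapsto(uz,v,z)$ into the one-point blow-up chart $(u,v,z)\mapsto(uz,vz,z)$ and pulling back $X$, exactly as you do. Your additional verification that no power of $z$ must be factored out (so that the strict transforms, not just the pull-backs, agree) is left implicit in the paper and is a worthwhile explicit confirmation; it holds because the singularity is nilpotent non-dicritical, so the relevant orders are all~$1$.
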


Consider again a vector field $X$ having the form~(\ref{nilpotent_normal_form_2}). In the course of
the preceding discussion, it was seen that the vector fields obtained through two
successive blow-ups centered over the corresponding curves of singular points are respectively given by
\begin{equation}
X_1 = zr(x,v,z) \frac{\partial}{\partial x} + (x + zs(x,v,z)) \frac{\partial}{\partial v} +
z^n \frac{\partial}{\partial z} \, , \label{X_1-equation1}
\end{equation}
and by
\begin{equation}
X_2 = (v + zf_{(1)}(u,v,z)) \frac{\partial}{\partial u} + zg_{(1)}(u, v,z)
\frac{\partial}{\partial v} + z^n \frac{\partial}{\partial z} \, , \label{X_2-equation2}
\end{equation}
where $r, \, s, \, f_{(1)}$, and $g_{(1)}$ are all holomorphic functions vanishing at the origin of the corresponding
coordinates. As usual the coordinates $(x,v,z)$ are determined by $(x,y,z) = (x , vz ,z)$ while
$(x,y,z) = (uz, vz, z)$. Furthermore the functions $f_{(1)}$ and
$g_{(1)}$ satisfy
$$
f_{(1)}(u,v,z) = \frac{f(uz,vz,z) - uz^{n-1}}{z} \quad \text{and} \quad g_{(1)}(u,v,z) = \frac{g(uz,vz,z) - vz^{n-1}}{z} \, .
$$
The following relations arise immediately:
\begin{itemize}
\item[(1)] ${\rm ord} \, r(0,0,z) = {\rm ord} \, f(0,0,z)$ and ${\rm ord} \, s(0,0,z) = {\rm ord} \, g(0,0,z)$;

\item[(2)] ${\rm ord} \, f_{(1)}(0,0,z) = {\rm ord} \, f(0,0,z)-1$ and
${\rm ord} \, g_{(1)}(0,0,z) = {\rm ord} \, g(0,0,z)-1$;

\item[(3)] $\partial g_{(1)} / \partial u (0,0,0) = \partial g / \partial x (0,0,0) = \lambda \ne 0$.
\end{itemize}

Assume now that the above procedure is continued, i.e. assume $X_2$ is blown-up at the origin
(identified with the singular point selected by the transform of the initial formal separatrix).
As pointed out in Remark~\ref{WhynoonepointblowupConditions}, here it is convenient to keep in mind that
two consecutive blow-ups centered at curves contained in the singular set of the corresponding foliation
can be replaced by a single one-point blow-up. To continue the
procedure requires us to introduce new affine coordinates for each of these blow-ups and, in doing so,
notation is likely to become cumbersome. To avoid this, and since the computations are similar to the previous ones,
let us abuse notation and write $(x,y,z)$ for the coordinates $(u,v,z)$: naturally these ``new'' coordinates $(x,y,z)$
have little to do with the initial ones. Similarly, coordinates for the first blow-up are then given by $(x,v,z)$
while the second blow-up possesses coordinates $(u,v,z)$. Assuming these identifications are made at every step - i.e.
at every pair of blow-ups as indicated above - let $X_{2i}$ denote the vector field obtained after $i$-steps
where $i$ satisfies $i < k_0$ (recall that $k_0$ stands for the contact order of the formal separatrix
with the ``initial $z$-axis'').
In the (final) coordinates $(u,v,z)$, the vector field $X_{2i}$ takes on the form~(\ref{nilpotent_normal_form_2})
\[
X_{2i} = (v + zf_{(i)}(u,v,z)) \frac{\partial}{\partial u} + zg_{(i)}(u,v,z) \frac{\partial}{\partial v} +
z^n \frac{\partial}{\partial z} \, ,
\]
with $\partial g_{(i)}/\partial u (0,0,0) = \partial g/\partial x(0,0,0) = \lambda \neq 0$.
In more accurate terms, recall that the orders of $f_{(i)}(0,0,z)$
and of $g_{(i)}(0,0,z)$ are directly related to the contact order of the transform of the formal separatrix $S$ with
the corresponding $z$-axis. At every step (consisting of a pair of blow-ups), the orders of
$f_{(i)}(0,0,z)$ and of $g_{(i)}(0,0,z)$ decrease by one unity so that we have
\[
{\rm ord} \, (f_{(i)}(0,0,z)) = {\rm ord} \, (f(0,0,z)) - i \quad \text{and} \quad
{\rm ord} \, (g_{(i)}(0,0,z)) = {\rm ord} \, (g(0,0,z)) - i \, .
\]
Thus, for $i \geq k_0 ={\rm min} \{{\rm ord} \, (f(0,0,z)), {\rm ord} \, (g(0,0,z))\}$, the vector field
$X_{2i}$ no longer takes on the form~(\ref{nilpotent_normal_form_2}). At first sight this might suggest that
the initial nilpotent singularity may fall short of being persistent, yet it is exactly the opposite that is true: the
singularity is necessarily persistent.

To explain the last claim, we begin by observing that the $z$-axis is not intrinsically
determined by Formula~(\ref{nilpotent_normal_form_2}). In fact, the $z$-axis is only subject to
having some {\it high}\, contact order with the formal smooth separatrix $S$ and it is~$S$ - rather than
the $z$-axis - that has an intrinsic nature in our discussion. In particular, if $S$ were analytic, we
could make $S$ coincide with the $z$-axis which, in turn,
would imply that all the functions $f_{(i)}(0,0,z)$ and $g_{(i)}(0,0,z)$ vanish identically. It would then
follow at once that the singularity is persistent.

\begin{rem}
\label{natureofcenters}
{\rm It should be emphasized that our definition of persistent singularities requires
the centers of all the blow-ups
to be contained in the singular set of the corresponding foliations. This accounts for the difference between
choosing centers that are contained in the singular set and the slightly weaker condition of allowing {\it invariant centers}.
An analytic separatrix of a foliation is a legitimate invariant center so that, if we are allowed to perform
blow-ups with invariant centers, the preceding singularity would be turned into an elementary one by blowing-up the
foliation along the separatrix in question. This explains why in the example of Sancho and Sanz they want the
corresponding separatrix to be strictly formal and further illustrates the analogous comments made in the
Introduction.}
\end{rem}

We now go back to the vector field $X_{2i}$ which no longer has the form~(\ref{nilpotent_normal_form_2}).
To show that this vector field corresponds to a persistent nilpotent singularity when the separatrix $S$ is strictly
formal we will construct a change of coordinates where the vector field $X_{2}$ still takes on
the form~(\ref{nilpotent_normal_form_2})
but where the orders of the ``new'' functions $z \mapsto f_{(1)}(0,0,z)$ and $z \mapsto g_{(1)}(0,0,z)$ increase
strictly so as to restore the values of the initial orders. The desired change of coordinates can be made
polynomial by truncating a certain {\it formal}\, change of coordinates as
in the proof of Proposition~\ref{prop_normal form}. This is the content of
Lemma~\ref{lemma_normal_form_higher_tang_order} below.

Consider the vector field $X_{2}$ given in $(u,v,z)$ coordinates by Formula~(\ref{X_2-equation2}) along with
the initial vector field $X$ given in $(x,y,z)$ coordinates by Formula~(\ref{nilpotent_normal_form_2}).

\begin{lemma}\label{lemma_normal_form_higher_tang_order}
There exists a polynomial change of coordinates $H$ having the form $(u,v,z) = H(\tilde{x}, \tilde{y}, z) =
(h_1 (\tilde{x},z), h_2 (\tilde{y}, z), z)$
where the vector field $X_2$ becomes
\[
X_2 = (\tilde{y} + z\bar{f}(\tilde{x},\tilde{y},z)) \frac{\partial}{\partial \tilde{x}} +
z\bar{g}(\tilde{x}, \tilde{y},z) \frac{\partial}{\partial \tilde{y}} + z^n \frac{\partial}{\partial z}
\]
with
\begin{itemize}
\item[(a)] ${\rm ord} \, (\bar{f}(0,0,z)) \geq  {\rm ord} \, (f(0,0,z))$ and
${\rm ord} \, (\bar{g}(0,0,z)) \geq {\rm ord} \, (g(0,0,z))$;

\item[(b)] $\partial \bar{g}/\partial \tilde{x} (0,0,0) = \partial g/\partial x (0,0,0)$.
\end{itemize}
\end{lemma}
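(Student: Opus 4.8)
The plan is to make systematic use of the remark preceding the statement: the $z$-axis has no intrinsic meaning, whereas the transform of the formal separatrix does, and the idea is simply to translate that separatrix onto the $z$-axis. Let $S_2$ be the formal separatrix of $\fol_2$ attached to $X_2$. Since $S_2$ is smooth and tangent to the $z$-axis, it is a graph over the $z$-variable and can be written as $\{u = \alpha(z),\, v = \beta(z)\}$ for formal series $\alpha, \beta$ with $\alpha(0)=\beta(0)=0$ whose orders coincide with the (large) contact order of $S_2$ with the $z$-axis; in particular $\alpha$ and $\beta$ have order at least~$3$. I would then consider the formal change of coordinates
\[
H(\tilde{x}, \tilde{y}, z) = (\tilde{x} + \alpha(z),\, \tilde{y} + \beta(z),\, z) ,
\]
which is an automorphism of the separated form $(h_1(\tilde{x},z), h_2(\tilde{y},z), z)$, preserves the coordinate $z$ (hence the invariant surface $\{z=0\}$), and carries $S_2$ onto the $z$-axis.

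Transporting $X_2$ through $H$ keeps the $\partial/\partial z$-component equal to $z^n$, since $z$ is preserved and $X_2(z)=z^n$. Writing $\tilde{x}=u-\alpha(z)$, $\tilde{y}=v-\beta(z)$ and applying $X_2$ gives
\[
X_2(\tilde{x}) = \tilde{y} + \big( \beta(z) + z f_{(1)}(\tilde{x}+\alpha, \tilde{y}+\beta, z) - \alpha'(z)z^n \big), \qquad X_2(\tilde{y}) = z g_{(1)}(\tilde{x}+\alpha, \tilde{y}+\beta, z) - \beta'(z) z^n .
\]
As $\beta$ has order at least~$2$ and $n\geq 2$, the bracketed term in the first equation and the whole right-hand side of the second are divisible by $z$, so $X_2$ again takes the form~(\ref{nilpotent_normal_form_2}) with $\bar{f} = \beta(z)/z + f_{(1)}(\cdots) - \alpha'(z)z^{n-1}$ and $\bar{g} = g_{(1)}(\cdots) - \beta'(z)z^{n-1}$. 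The decisive point is that restricting these two functions to $\tilde{x}=\tilde{y}=0$ reproduces exactly the invariance equations of $S_2$: tangency of $X_2$ to $\{u=\alpha,\,v=\beta\}$ forces $z^n\alpha'(z) = \beta(z)+zf_{(1)}(\alpha,\beta,z)$ and $z^n\beta'(z) = z g_{(1)}(\alpha,\beta,z)$, which is precisely the statement that $\bar{f}(0,0,z)\equiv 0$ and $\bar{g}(0,0,z)\equiv 0$; that is, with the exact formal $H$ both restrictions to the $z$-axis vanish identically.

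To obtain a genuinely polynomial change of coordinates I would truncate $\alpha$ and $\beta$ at an order $N > \max\{{\rm ord}(f(0,0,z)), {\rm ord}(g(0,0,z))\}$, producing polynomials $\alpha_N, \beta_N$ and the polynomial map $H_N(\tilde{x},\tilde{y},z)=(\tilde{x}+\alpha_N(z), \tilde{y}+\beta_N(z), z)$; the same computation shows $X_2$ still acquires the normal form. Now the identities of the previous paragraph hold only modulo the tails $\alpha-\alpha_N, \beta-\beta_N = O(z^{N+1})$, whence ${\rm ord}(\bar{f}(0,0,z))\geq N$ and ${\rm ord}(\bar{g}(0,0,z))\geq N$, which gives~(a). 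Finally, since $H_N$ is a translation one has $\partial u/\partial \tilde{x}=1$, so by the chain rule $\partial \bar{g}/\partial \tilde{x} = \partial g_{(1)}/\partial u(\tilde{x}+\alpha_N, \tilde{y}+\beta_N, z)$, and evaluating at the origin with $\alpha_N(0)=\beta_N(0)=0$ together with the identity $\partial g_{(1)}/\partial u(0,0,0)=\partial g/\partial x(0,0,0)$ noted above yields $\partial \bar{g}/\partial \tilde{x}(0,0,0)=\partial g/\partial x(0,0,0)$, which is~(b). The only step demanding real care is this truncation bookkeeping: one must confirm that discarding the order-$\geq N$ tails of $\alpha,\beta$ neither breaks the divisibility by $z$ that makes $\bar{f},\bar{g}$ holomorphic nor disturbs the linear datum entering~(b), and both are clear because the discarded terms sit well above the relevant thresholds.
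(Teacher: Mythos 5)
Your proposal is correct and follows essentially the same route as the paper: translate the formal separatrix $S_2$ onto the $z$-axis by a formal shear $(\tilde{x},\tilde{y},z)\mapsto(\tilde{x}+\alpha(z),\tilde{y}+\beta(z),z)$, observe that with the exact formal map the restrictions $\bar f(0,0,z)$ and $\bar g(0,0,z)$ vanish identically by the invariance equations, and then truncate at a sufficiently high order to obtain a polynomial change of coordinates. Your write-up is in fact more explicit than the paper's (which dismisses the computation as straightforward), and the truncation bookkeeping you carry out is exactly the verification the paper leaves implicit.
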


\begin{proof}
Denote by $S_2$ the transform of $S$ through the one-point blow-up centered at the origin which is therefore
a formal separatrix for the foliation associated with $X_2$. Since $S_2$ is smooth and tangent to the $z$-axis,
it can be (formally) parameterized by the
variable~$z$. In other words, $S_2$ is given by $\varphi(z) = (f(z), g(z), z)$ for suitable formal series
$f$ and $g$ with zero linear parts.

Consider now the formal map given in local coordinates $(\tilde{x},\tilde{y},z)$ by
$H(\tilde{x}, \tilde{y},z) = (\tilde{x}-f(z), \tilde{y}-g(z),z)$. The linear part of $H$ at the origin is
represented by the identity matrix so that $H$ is a formal change of coordinates which, in addition, preserves
the plane $\{z=0\}$. Furthermore, $H$
takes the formal separatrix $S_2$ to the $z$-axis. As previously mentioned, the formal vector field obtained by
conjugating $X_2$ through $H$ is strictly formal if $S_2$ is strictly formal. So, let $H_m$ stand for the polynomial
change of coordinates obtained from $H$ by truncating it at order~$m$ and let
$Y_m = (DH_m)^{-1} \, (X \circ H_m)$. Clearly the
map $H_m$ is holomorphic and so is the vector field $Y_m$. Moreover the
foliation $\fol_m$ associated to $Y_m$ possesses a formal separatrix $T_m$, whose tangency order with the $z$-axis
goes to infinity with~$m$.

It is straightforward to check that the vector field $Y_m$ has the form
\[
Y_m = (\tilde{y} + zf_m(\tilde{x},\tilde{y},z)) \frac{\partial}{\partial \tilde{x}} +
zg_m(\tilde{x},\tilde{y},z) \frac{\partial}{\partial \tilde{y}} + z^n \frac{\partial}{\partial z} \,
\]
with $\partial g_m /\partial \tilde{x} (0,0,0) = \partial g /\partial x (0,0,0)$, for every $m \in \Z$. Furthermore,
for $m$ sufficiently large we have ${\rm ord} \, (f_m(0,0,z)) \geq {\rm ord} \, (f(0,0,z))$ and ${\rm ord} \, (g_m(0,0,z))
\geq {\rm ord} \, (f(0,0,z))$ as well. The lemma is then proved.
\end{proof}

The results of this section can now be summarized as follows:

\begin{theorem}
\label{normalformpersistentnilpotent}
Let $\fol$ be a singular holomorphic foliation on $(\C^3,0)$ and assume that the origin is a persistent nilpotent singularity
of $\fol$. Let $S$ denote the corresponding formal separatrix of $\fol$. Then, up to finitely many one-point blow-ups, the
foliation $\fol$ is represented by a vector field $X$ having the form
\begin{equation}\label{normal_fom_teo}
(y + zf(x,y,z)) \frac{\partial}{\partial x} + zg(x,y,z) \frac{\partial}{\partial y} + z^n \frac{\partial}{\partial z}
\end{equation}
for some $n \in \N$, $n \geq 2$, and some holomorphic functions $f$ and $g$ of order at least~$1$ such that
\begin{itemize}
\item[(a)] The separatrix $S$ is tangent to the $z$-axis. In fact, the contact order of $S$ and the $z$-axis can
be made arbitrarily large. Equivalently the orders of $f(0,0,z)$ and of $g(0,0,z)$ are arbitrarily large.

\item[(b)] $\partial g / \partial x (0,0,0) \ne 0$.
\end{itemize}
Conversely, every nilpotent foliation $\fol$ represented by a vector field $X$ as above and
possessing a (smooth) formal separatrix $S$ tangent to the $z$-axis gives rise to a persistent nilpotent singularity.\qed
\end{theorem}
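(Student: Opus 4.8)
The plan is to treat the statement as a synthesis of the material already established in this section, reading off each half from the preceding lemmas. For the direct implication I would begin with the hypothesis that the origin is a persistent nilpotent singularity with associated formal separatrix $S$. Proposition~\ref{prop_normal form} then supplies, after finitely many one-point blow-ups, local coordinates and a representative vector field of the form~(\ref{nilpotent_normal_form_1}) with $f,g$ of order at least~$2$ and with the orders of $f(0,0,z)$ and $g(0,0,z)$ as large as desired; one further blow-up together with Lemma~\ref{furthersimplification-1} makes $f$ and $g$ divisible by $z$, placing $X$ in the form~(\ref{normal_fom_teo}). Item~(a) is exactly the assertion that $S$ is tangent to the $z$-axis with arbitrarily large contact order, which is built into Proposition~\ref{prop_normal form} (the tangency order of the separatrix grows with the truncation index, and is equivalent to the orders of $f(0,0,z)$ and $g(0,0,z)$). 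Item~(b) is precisely Lemma~\ref{furthersimplification-2}, giving $g(x,0,0)=\lambda x + {\rm h.o.t.}$ with $\lambda\neq 0$, i.e. $\partial g/\partial x(0,0,0)\neq 0$.

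For the converse I would take the given smooth formal separatrix $S$ tangent to the $z$-axis as the witnessing separatrix in Definition~\ref{definitionpersistentnilpotent} and verify conditions~($\imath$) and~($\imath\imath$) for an arbitrary admissible blow-up sequence. Since the singular set of $\fol$ near the origin is the smooth curve $\{y=0,z=0\}$, every admissible center is either the point $p_n$ selected by $S_n$ or this curve, and I would proceed by induction on the blow-up steps. Lemma~\ref{furthersimplification-3} together with the explicit transforms~(\ref{X_1-equation1}) and~(\ref{X_2-equation2}) shows that a single curve blow-up yields the nilpotent form~(\ref{X_1-equation1}) (linear part $x\,\partial/\partial v$) and that a second one, equivalently by Remark~\ref{WhynoonepointblowupConditions} a single point blow-up, returns the field to the form~(\ref{nilpotent_normal_form_2}), with the non-degeneracy $\partial g/\partial x(0,0,0)=\lambda\neq0$ preserved and the component $z^n\,\partial/\partial z$ left untouched at each stage. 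Each intermediate vector field therefore has nilpotent non-zero linear part, giving~($\imath$); and because $S_n$ stays tangent to the corresponding $z$-axis, the multiplicity ${\rm mult}(\fol_n,S_n)$ is computed from this unchanged $z$-component exactly as in the proof of Lemma~\ref{ordervf} and remains equal to $n$, giving~($\imath\imath$).

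The one genuine obstacle in the converse is that the normal form~(\ref{nilpotent_normal_form_2}) is not preserved indefinitely: by relations~(1)--(3) the orders of $f_{(i)}(0,0,z)$ and $g_{(i)}(0,0,z)$ fall by one unit at each step, so after $k_0$ steps the vector field $X_{2i}$ no longer has the required shape. This is where Lemma~\ref{lemma_normal_form_higher_tang_order} does the essential work: a polynomial change of coordinates, obtained by truncating the formal conjugacy that straightens $S$ onto the $z$-axis, restores the form~(\ref{nilpotent_normal_form_2}) with the orders of the new $\bar f(0,0,z)$ and $\bar g(0,0,z)$ boosted back to at least their initial values while keeping $\partial\bar g/\partial\tilde x(0,0,0)=\lambda\neq0$. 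The induction can thus be reset and continued without bound, so no admissible sequence ever reaches an elementary point and the multiplicity never drops. I would close, following Remark~\ref{natureofcenters}, by stressing that the persistence relies on restricting centers to the singular set: were $S$ available as a center one could blow up along it and render the singularity elementary, which is exactly the phenomenon the definition is designed to exclude.
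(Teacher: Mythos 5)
Your proposal is correct and follows essentially the same route as the paper, whose ``proof'' of Theorem~\ref{normalformpersistentnilpotent} is precisely the assembly of Proposition~\ref{prop_normal form}, Lemmas~\ref{furthersimplification-1}--\ref{furthersimplification-3}, the explicit blow-up computations~(\ref{X_1-equation1})--(\ref{X_2-equation2}), and Lemma~\ref{lemma_normal_form_higher_tang_order} to restore the normal form once the orders of $f_{(i)}(0,0,z)$ and $g_{(i)}(0,0,z)$ have dropped. You correctly identified the one genuinely delicate point (the loss of the normal form after $k_0$ steps and its recovery by truncating the formal straightening of $S$), which is exactly where the paper places the weight of the argument.
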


To close the section, let us accurately state Proposition~\ref{BasedonCano_Roche_Spivakovsky} so as to derive
Theorem~\ref{Microlocalversion_TheoremA}.

\begin{prop}
\label{BasedonCano_Roche_Spivakovsky}
Let $\fol$ be a (germ of) singular $1$-dimensional foliation defined on a neighborhood of the origin
in $\C^3$. Assume that $\fol$ cannot be transformed into a foliation all of whose singular points are
elementary by means of a finite sequence of blow-ups with centers contained in the singular set of the corresponding foliations.
Then, up to performing a finite sequence of blow-ups as above,  the foliation $\fol$
possesses a formal separatrix $S$ giving rise to a sequence of infinitely near singular points
and such that none of the points in this sequence is elementary.
\end{prop}

\begin{proof}[Proof of Theorem~\ref{Microlocalversion_TheoremA}]
Assume that $\fol$ is singular foliation on $(\C^3, 0)$ whose singularity cannot be resolved by blow-ups
centered at the singular set of $\fol$. Owing to Proposition~\ref{BasedonCano_Roche_Spivakovsky}, let
$S$ denote a formal separatrix of $\fol$ giving rise to a sequence of (non-elementary)
infinitely near singular points. Next
apply a sequence of one-point blow-ups to $S$ and to its transform. Since the multiplicity of the corresponding
foliations along the transforms of $S$ form a monotone decreasing sequence, this sequence becomes constant
after a finite number of steps. Denoting by $\fol_k$ (resp. $S_k$) the corresponding foliation (resp. transform
of $S$), there follows that $\fol_k$ has a nilpotent singular point at the point in the exceptional divisor
selected by $S_k$. Furthermore, owing to Remark~\ref{WhynoonepointblowupConditions}, the multiplicity in question
does decrease even if blow-ups centered at (smooth) singular curves are allowed. Thus the mentioned nilpotent
singular point of $\fol_k$ is a persistent one, i.e. it satisfies the conditions
in Definition~\ref{definitionpersistentnilpotent}. The results of the present section can now be
applied to this singular point and the statement follows from Theorem~\ref{normalformpersistentnilpotent}.
\end{proof}

\section{Proof of Theorem B}\label{semicompletestuff}

This section is devoted to the proof of Theorem~B and of its corollaries while Section~\ref{Examples_and_comments}
will contain some examples complementing our main results as well as a sharper version of the normal form given
in Theorem~\ref{normalformpersistentnilpotent} which is valid for foliations tangent to semicomplete vector fields.

Let $X$ be a holomorphic vector field defined on an open set $U$ of some complex manifold.
According to~\cite{JR1}, $X$ is said to be semicomplete on $U$ if for every point $p \in U$ there exists a connected
domain $V_p \subseteq \C$ with $0 \in V_p$ and a holomorphic map $\phi_p: V_p \to U$ satisfying the following conditions:
\begin{itemize}
\item $\phi_p(0) = 0$ and $\frac{d\phi}{dt}|_{t=t_0} = X(\phi_p(t_0))$.

\item For every sequence $\{t_i\} \subseteq V_p$ such that $\lim_{i \to +\infty} t_i \in \partial V_p$, the sequence
$\{\phi_p(t_i)\}$ leaves every compact subset of $U$.
\end{itemize}

We also refer to~\cite{JR1} for the basic properties of semicomplete vector fields used in the sequel.

First, a vector field that is semicomplete on $U$ is semicomplete on every open set $V \subseteq U$ as well.
In particular, the notion of {\it germ of semicomplete vector field}\, makes sense. Furthermore, if $X$ is a complete
vector field on a complex manifold $M$, then the germ of $X$ at every singular point is necessarily a germ of
semicomplete vector field.

There is a useful criterion (Proposition~\ref{prop_timeform})
to detect vector fields that fail to be semicomplete which is as follows. Let
$X$ be a holomorphic vector field defined on an open set $U$ and denote by $\fol$ its associated (singular)
holomorphic foliation. Consider a leaf $L$ of $\fol$ which {\it is not}\, contained in the zero set of
$X$. Then leaf $L$ is then a Riemann surface naturally equipped with a meromorphic
abelian $1$-form $dT$ dual to $X$ in the sense that $dT.X = 1$ on $L$. The $1$-form $dT$ is often
referred to as the {\it time-form}\, induced by $X$ on $L$. The following proposition is taken from
\cite{JR1}.

\begin{prop}\label{prop_timeform}
Let $X$ be a holomorphic semicomplete vector field on an open set $U$. Let
$L$ be a leaf of the foliation associated with $X$ on which the time-form $dT$ is defined (i.e. $L$
is not contained in the zero set of $X$). Then we have
\[
\int_c dT \ne 0
\]
for every path $c:[0,1] \to L$ (one-to-one) embedded in $L$.\qed
\end{prop}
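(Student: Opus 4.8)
The plan is to argue by contradiction, exploiting the fact that $dT$ is by construction dual to $X$ and therefore converts the complex time parameter of the flow into the integral of $dT$ along the leaf. First I would record that, since $L$ is a genuine leaf of $\fol$ and hence disjoint from the zero set of $X$, the vector field $X$ is holomorphic and nowhere vanishing on $L$; consequently $dT$ is a holomorphic, nowhere vanishing $1$-form on the Riemann surface $L$, and the primitive $\tau(s) = \int_{c|_{[0,s]}} dT$ defines a continuous map $\tau : [0,1] \to \C$ (smooth wherever $c$ is) with $\tau(0) = 0$ and $\tau(1) = \int_c dT$. Writing $p = c(0)$ and letting $\phi_p : V_p \to U$ be the maximal semicomplete flow supplied by the definition, the heart of the argument is the identity $c(s) = \phi_p(\tau(s))$, valid for every $s \in [0,1]$.

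To establish this identity I would run a connectedness (continuation) argument on the set $I = \{ s \in [0,1] : \tau(s) \in V_p \text{ and } c(s) = \phi_p(\tau(s))\}$, which is nonempty since $0 \in I$. It is open: near any $s_0 \in I$ both $c(\cdot)$ and $h(\cdot) = \phi_p(\tau(\cdot))$ satisfy the same reparametrized ODE $w'(s) = \lambda(s)\, X(w(s))$. Indeed $c'(s)$ is tangent to $L$, hence $c'(s) = \lambda(s) X(c(s))$ for a scalar $\lambda(s)$, and then $\tau'(s) = dT(c'(s)) = \lambda(s)$ because $dT(X) = 1$, while $h'(s) = X(h(s))\tau'(s)$; since $h(s_0) = c(s_0)$, local uniqueness forces $h \equiv c$ near $s_0$.

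The crucial step is closedness, and this is exactly where semicompleteness enters. Take $s_n \in I$ with $s_n \to s_\infty$. Then $\tau(s_n) \to \tau(s_\infty)$ in $\C$ and $\phi_p(\tau(s_n)) = c(s_n) \to c(s_\infty) \in L \subseteq U$. If $\tau(s_\infty)$ were to lie on $\partial V_p$, the defining property of a semicomplete flow would force $\{\phi_p(\tau(s_n))\}$ to leave every compact subset of $U$; but this sequence converges to $c(s_\infty) \in U$ and so stays in a compact set, a contradiction. Hence $\tau(s_\infty) \in V_p$, and continuity gives $c(s_\infty) = \phi_p(\tau(s_\infty))$, so $s_\infty \in I$. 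By connectedness $I = [0,1]$, and in particular $c(1) = \phi_p(\tau(1))$.

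Finally, suppose toward a contradiction that $\int_c dT = 0$. Then $\tau(1) = 0$, whence $c(1) = \phi_p(0) = p = c(0)$; since $0 \neq 1$ this contradicts the injectivity of the embedded path $c$. I expect the main obstacle to be the careful justification of the identity $c(s) = \phi_p(\tau(s))$ — specifically, confirming via the semicomplete property that $\tau([0,1])$ never reaches $\partial V_p$ — rather than the concluding contradiction, which is immediate once the identity is in hand.
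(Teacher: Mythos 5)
Your proof is correct and is precisely the standard argument from \cite{JR1} that the paper invokes for this proposition (the paper itself gives no proof, stating the result with a reference): integrate $dT$ to get $\tau$, identify $c(s)$ with $\phi_{p}(\tau(s))$ by an open-closed continuation in which the semicomplete boundary condition rules out $\tau$ reaching $\partial V_p$, and conclude $c(1)=\phi_p(0)=c(0)$ against injectivity. One small imprecision: the hypothesis is that $L$ is not \emph{contained in} the zero set of $X$, not that it is disjoint from it, so $dT$ is a priori only a meromorphic form on $L$ and your argument applies to paths avoiding its poles --- which is implicit anyway for $\int_c dT$ to be defined.
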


The main result of this section is the following theorem.

\begin{theorem}\label{teo_semi-complete}
Let $X$ be a holomorphic vector field on $(\C^3,0)$ and denote by $\fol$ its associated foliation. Assume that the
origin is a persistent nilpotent singularity for $\fol$ and let $S$ denote the corresponding formal separatrix
of $\fol$. Assume also that at least one the following holds:
\begin{itemize}
\item The multiplicity  ${\rm mult} \, (\fol,S)$ of $\fol$ along $S$ is at least~$3$;
\item The linear part $J^1_0 X$ of $X$ at the origin equals zero.
\end{itemize}
then $X$ is not semicomplete on a neighborhood of the origin.
\end{theorem}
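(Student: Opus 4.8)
The plan is to assume that $X$ \emph{is} semicomplete and to contradict the criterion of Proposition~\ref{prop_timeform}. I would first normalize the situation: since the origin is a persistent nilpotent singularity, Theorem~\ref{normalformpersistentnilpotent} allows me to perform finitely many one-point blow-ups after which the transformed foliation is represented by a vector field of the form~(\ref{normal_fom_teo}), whose $z$-component is exactly $z^n$ with $n={\rm mult}\,(\fol,S)\ge 2$. The crucial bookkeeping point is that semicompleteness is inherited by the genuine pull-back $\widehat{X}=\pi^{\ast}X$ (blow-ups are biholomorphic off the exceptional divisor, so the flow transfers), and \emph{not} by the foliated representative; this is exactly where the factors $z^{\alpha}$ discussed before Proposition~\ref{prop_multiplicity} must be tracked. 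My key reduction is to replace the two hypotheses by the single condition ${\rm mult}\,(X,S)\ge 3$ on the \emph{vector field} $X$, which by Lemma~\ref{ordervf} is a blow-up invariant. Indeed, if ${\rm mult}\,(\fol,S)\ge 3$ then ${\rm mult}\,(X,S)\ge{\rm mult}\,(\fol,S)\ge 3$; and if $J^1_0X=0$, then—since a persistent nilpotent singularity is in particular nilpotent, so that $\fol$ has order~$1$ at the origin—the vector field $X$ must carry a common factor $c$ vanishing at the origin, whence ${\rm mult}\,(X,S)={\rm ord}\,(c\circ\varphi,0)+{\rm mult}\,(\fol,S)\ge 1+2=3$.

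Next I would pass to a one-dimensional problem along the separatrix. Writing the leaves as integral curves of the system $dx/dz=X_x/X_z$, $dy/dz=X_y/X_z$, the point $z=0$ is an irregular singular point because $X_z$ vanishes to order $m={\rm mult}\,(X,S)$ along $S$. Malmquist's theorem on asymptotic expansions then furnishes \emph{genuine analytic leaves} $L$, defined on sectors at $z=0$ and asymptotic to the formal separatrix $S$. Restricting the semicomplete field $\widehat{X}$ to such an invariant leaf yields a one-dimensional holomorphic field $\widehat{X}|_L=v(z)\,\partial/\partial z$, which is again semicomplete and whose coefficient satisfies ${\rm ord}_z\,v=m\ge 3$ precisely because $L$ is asymptotic to $S$.

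The contradiction is then produced by the developing (time) coordinate $\Phi(z)=\int dz/v(z)$, which satisfies $d\Phi=dT$. Since $v$ has a zero of order $m\ge 3$, one has $\Phi(z)=A\,z^{-(m-1)}+\cdots+B\log z+(\text{holomorphic})$ with $A\ne 0$, so $\Phi$ carries a pole of order $m-1\ge 2$ which dominates the logarithmic term near $z=0$. A function with a pole of order $\ge 2$ fails to be injective near that pole; hence there exist distinct points $p_0,p_1\in L$ (found on a sufficiently wide sector, or after analytic continuation of $L$ over the universal cover of the punctured disc) with $\Phi(p_0)=\Phi(p_1)$. Joining them by an embedded arc $c$ on $L$ gives $\int_c dT=\Phi(p_1)-\Phi(p_0)=0$, contradicting Proposition~\ref{prop_timeform}. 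The threshold is sharp: for $m=2$ the pole of $\Phi$ is simple and $\Phi$ is injective, consistent with the fact that the Sancho--Sanz examples (${\rm mult}=2$) are not excluded.

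I expect the genuinely delicate step to be the analytic one: producing honest leaves asymptotic to a \emph{strictly formal} separatrix at an irregular singular point through Malmquist's theorem, and ensuring that the available angular width (Stokes sectors), after analytic continuation, suffices to realize the non-injectivity of $\Phi$ on a single connected leaf. By contrast, the reduction of both hypotheses to the invariant condition ${\rm mult}\,(X,S)\ge 3$ and the one-dimensional pole-order computation are elementary.
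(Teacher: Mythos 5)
Your overall strategy --- produce honest sectorial leaves asymptotic to the formal separatrix via Malmquist, restrict $X$ to such a leaf to get a one-dimensional field $v(z)\,\partial/\partial z$ of order $m={\rm mult}\,(X,S)=n+k$, and contradict Proposition~\ref{prop_timeform} by finding an embedded arc on which the time-form integrates to zero --- is the same as the paper's, and your reduction of the two hypotheses to the single blow-up-invariant condition ${\rm mult}\,(X,S)\ge 3$ is correct and tidies up the paper's case split ($n\ge 3$ or $k\ge 1$, with $m=n+k$ in the normal form~(\ref{normal_form_sc})). But there is a genuine gap exactly at the step you call ``delicate'' and defer: Malmquist's theorem, in the form used here (system~(\ref{diff_eq_Malm})), requires the transverse linear part to be invertible ($s_1s_2\ne 0$). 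For the nilpotent normal form the system along the $z$-fibration reads $z^n\,dx/dz=y+\cdots$, $z^n\,dy/dz=\lambda zx+\cdots$, whose leading matrix $\left(\begin{smallmatrix}0&1\\0&0\end{smallmatrix}\right)$ is nilpotent, so the theorem does not apply in the coordinates you work in. The paper's weight-$2$ blow-up $\Pi(u,v,w)=(u,vw,w^2)$ centered on $\{y=z=0\}$ is precisely the ramification that converts this into a resonant saddle-node with transverse eigenvalues $\pm\sqrt{\lambda}$, to which Malmquist applies and which yields sectors of angle $2\pi/(2n-3)-\varepsilon$ in the $w$-variable. Without it, the leaves $L$ you invoke do not exist as stated.

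This omission also infects the endgame. Your non-injectivity criterion (pole of order $m-1\ge 2$ for $\Phi$) identifies the correct threshold, but whether the non-injectivity is realized on a single connected sectorial leaf depends on comparing the rotation $2\pi/(m-1)$ you need with the aperture of the available sectors, and that aperture is only known after computing $l=2n-3$ upstairs. In the paper the same comparison appears in dual form: the constructed path always annihilates the time-form of $\Pi^{\ast}X$ (so $\Pi^{\ast}X$ is never semicomplete, even for $n=2$, $k=0$), and the hypothesis $n+k\ge 3$ is used only to guarantee that the endpoints, whose arguments differ by roughly $\pi/(n+k-1)$, have distinct images under the two-to-one map $\Pi$. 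So the covering bookkeeping you hoped to avoid by staying downstairs reappears as the sector-width verification you left open; either way the ramification $z=w^2$ is unavoidable. A secondary, fixable point: since $v$ is only asymptotic to a power series, the non-injectivity of $\Phi$ must be extracted by the perturbation argument of Lemma~\ref{asymptotic_vfield} rather than from an exact Laurent expansion.
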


Our approach to Theorem~\ref{teo_semi-complete} begins with a couple of remarks. First, it is convenient
to remind the reader of the difference between vector fields and foliations in terms of the
dimension of their singular sets. In other words, a vector field may have non-trivial common factors
among its components so as to give rise to a divisor of zero while singular sets of foliations are always
of codimension at least~$2$. Recall also that a vector field $Y$ is said to be a representative
of the foliation $\fol$ if $Y$ is tangent
to $\fol$ and has singular set of codimension at least~$2$.

In view of the preceding, we will often write a holomorphic vector field $X$ under the form
$X = hY$ where $h$ is a holomorphic function and $Y$ is a holomorphic vector field with singular set
of codimension at least~$2$. With this notation, the vector field $Y$ is a (local) representative of the foliation
$\fol$ associated with $X$.

On a different note, it should also be pointed out that
the semicomplete character of a holomorphic vector field is preserved
under birational transformations. In particular, it is preserved under blow-ups. It is, however, not necessarily
preserved under {\it weighted}\, blow-ups if these {\it are regarded as finite-to-one maps}\, rather than from
the birational point of view associated with the orbifold action. Incidentally, blow-ups with weight~$2$ will be needed
in what follows.

Let us now fix a holomorphic vector field $X$ on $(\C^3,0)$ whose associated foliation $\fol$ is as in
Theorem~\ref{Microlocalversion_TheoremA}. Since blow-ups preserve the semicomplete
character of vector fields, up to transforming $X$ through finitely many one-point blow-ups, we can assume
that $\fol$ has a persistent nilpotent singularity with a formal separatrix $S$
giving rise to a sequence of infinitely near (nilpotent) singular points.

Summarizing what precedes, we can assume the existence
of local coordinates $(x,y,z)$ where $X$ is given by (cf. Sections~\ref{multiplicityalongseparatrix} and~\ref{persistentnilpotent-Section})
\begin{equation}\label{normal_form_sc}
X = z^k h(x,y,z) \left[(y + zf(x,y,z)) \frac{\partial}{\partial x} + zg(x,y,z)
\frac{\partial}{\partial y} + z^n \frac{\partial}{\partial z} \right]
\end{equation}
for suitable nonnegative integers $k, n$ and holomorphic functions $f$, $g$, and $h$ satisfying all of the following:
\begin{itemize}
\item $n \geq 2$ and $k \geq 0$;
\item $f (0,0,0) = g (0,0,0) =0$. Furthermore the orders at $0 \in \C$ of $f(0,0,z)$ and $g(0,0,z)$
are arbitrarily large, say larger than $2n$ (in particular $S$ is tangent to the $z$-axis);
\item $\partial g (0,0,0) /\partial x =\lambda  \ne 0$;

\item If $h (0,0,0) =0$, then every irreducible component of the set $\{ h=0\}$ is smooth, contains the separatrix
$S$, and {\it is not}\, invariant under $\fol$.
\end{itemize}

The above assertion involving the irreducible components of $\{ h=0\}$ requires a couple of comments. Naturally,
every analytic surface that {\it does not contain}\, the separatrix $S$ can be separated from $S$ by a suitably
chosen sequence of blow-ups. Similarly, these components can be made smooth without loss of generality. Finally,
for the fact that none of them is invariant under $\fol$, we refer to Corollary~\ref{Non-invariant-set_of_zeros}
which is a by-product of the proof of Proposition~\ref{BasedonCano_Roche_Spivakovsky} in the appendix.

Theorem~\ref{teo_semi-complete} is thus reduced to proving that $X$, as given in Formula~(\ref{normal_form_sc}),
is not semicomplete on a neighborhood of the origin provided that at least one of the following conditions holds:
the linear part of $X$ at the origin $J^1_0 X$ equals zero (equivalently $k \geq 1$) or
$n = {\rm mult} \, (\fol, S) > 2$, where $\fol$ stands for the foliation associated with $X$.

Let us begin by showing that $\fol$ can be resolved by using a single blow-up of weight~$2$. Here these
weight~$2$ blow-ups will be viewed as a two-to-one map. Note also that the lemma below includes some useful
explicit formulas for the transformed vector field.

\begin{lemma}\label{lemma_ramified_blowup}
Let $X$ be as in Formula~(\ref{normal_form_sc}) and denote by
$\Pi$ the blow-up of weight~$2$ centered at the curve $\{ y=z=0\}$ (the curve of singular points
of $\fol$). Let $\Pi^{\ast} \fol$ be the transform of $\fol$. Then the singular point of $\Pi^{\ast} \fol$
selected by $S$ in the exceptional divisor is elementary and the corresponding eigenvalues of $\Pi^{\ast} \fol$
are $0$, $1$, and $-1$. Furthermore
the transform $\Pi^{\ast} X$ of $X$ is a holomorphic vector field vanishing with order $2k+1$ on the exceptional divisor.
\end{lemma}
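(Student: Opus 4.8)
The plan is to realize $\Pi$ explicitly in the chart that meets the separatrix $S$, compute the transform $\Pi^{\ast}X$ by direct substitution, and then read off both the eigenvalues and the order of vanishing from the resulting formula. First I would pin down the weights. Since the curve of singular points is the $x$-axis $\{y=z=0\}$ and the leading part of the bracketed factor in~(\ref{normal_form_sc}) is $y\,\partx+\lambda x z\,\party$ (using $f(0,0,0)=g(0,0,0)=0$ and $\partial g/\partial x(0,0,0)=\lambda\neq 0$), a quasi-homogeneity computation for this principal part forces the weights $(w_y,w_z)=(1,2)$; that is, the weight-$2$ blow-up assigns weight $2$ to $z$ and weight $1$ to $y$. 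In the chart relevant to $S$ — the one in which the weight-$2$ coordinate is a root of $z$ — the map $\Pi$ is the two-to-one map $(x,\bar y,\bar z)\mapsto(x,y,z)=(x,\bar y\bar z,\bar z^{2})$, ramified along the exceptional divisor $\{\bar z=0\}$, which matches the ``two-to-one'' viewpoint announced before the lemma. Because $S$ is tangent to the $z$-axis, along $S$ both $x=\bar y\bar z$ and $\bar y=y/\bar z$ tend to $0$, so $S$ selects precisely the origin of this chart.

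Next I would substitute. Writing $X=z^{k}h\,Y$ with $Y$ the bracketed representative, I would compute $Y(x)$, $Y(\bar y)$ and $Y(\bar z)$ using $\bar z=z^{1/2}$ and $\bar y=y z^{-1/2}$. The half-integer powers appearing in the intermediate steps recombine into genuinely holomorphic expressions admitting a common factor $\bar z$; after factoring one finds $\Pi^{\ast}X=\bar z^{\,2k+1}(h\circ\Pi)\,Z$, where
\[
Z=(\bar y+\bar z f)\,\partx+\bigl(g-\tfrac12\,\bar y\,\bar z^{\,2n-3}\bigr)\,\party+\tfrac12\,\bar z^{\,2n-2}\,\partz ,
\]
with $f,g$ evaluated at $(x,\bar y\bar z,\bar z^{2})$. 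Here the factor $\bar z^{\,2k}$ comes from $z^{k}\circ\Pi=\bar z^{\,2k}$ and the extra $\bar z$ from the bracketed field. Since $h\circ\Pi$ is a unit and $Z$ does not vanish identically on $\{\bar z=0\}$ (its first component restricts to $\bar y$), this simultaneously shows that $\Pi^{\ast}X$ is holomorphic and that it vanishes to order exactly $2k+1$ along the (irreducible) exceptional divisor.

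Finally I would read off the linear part of $Z$ at the origin. Because $n\geq 2$, the terms $\bar z f$, $\tfrac12\,\bar y\,\bar z^{\,2n-3}$ and $\tfrac12\,\bar z^{\,2n-2}$ are of higher order, so the linearization is $\bar y\,\partx+\lambda x\,\party$ with vanishing $\partz$-component; its matrix has characteristic polynomial $-\mu(\mu^{2}-\lambda)$, hence eigenvalues $0,\ \sqrt{\lambda},\ -\sqrt{\lambda}$ with $\lambda\neq 0$. Thus the selected singular point is elementary, and since the eigenvalues of a foliation are defined only up to a common scalar, normalizing the representative turns them into $0,1,-1$ as claimed.

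I expect the main obstacle to be organizational rather than deep: one must choose the correct chart and weight vector so that no genuine pole survives — the naive choice of weight $2$ on $y$ instead of $z$ produces a term $\lambda x/\bar z$ — and then verify carefully that the \emph{full} functions $f$, $g$, $h$ in~(\ref{normal_form_sc}), not merely the model principal part, affect neither the leading linear part $\bar y\,\partx+\lambda x\,\party$ nor the exact exponent $2k+1$. The conditions already secured for the normal form (orders of $f(0,0,z),g(0,0,z)$ large, $n\geq 2$, $\partial g/\partial x(0,0,0)\neq 0$) are exactly what is needed to control these higher-order contributions.
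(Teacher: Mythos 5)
Your proposal is correct and follows essentially the same route as the paper: write the weight-$2$ blow-up in the chart $(x,\bar y,\bar z)\mapsto(x,\bar y\bar z,\bar z^{2})$ meeting $S$, pull back $X$ by direct substitution, factor out $\bar z^{\,2k+1}$ (using $h(0,0,0)\neq 0$ to see the order is exact), and read off the linear part $\bar y\,\partx+\lambda x\,\party$ with eigenvalues $0,\pm\sqrt{\lambda}$, normalized to $0,1,-1$. Your general exponent $\bar z^{\,2n-3}$ in the middle component is in fact the correct one (the paper's displayed $-\tfrac12 vw$ is the $n=2$ instance), and your preliminary justification of the weight vector and chart choice is a harmless elaboration of what the paper simply asserts.
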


\begin{proof}
Let $(x,y,z)$ be the local coordinates where $X$ is given by Formula~(\ref{normal_form_sc}) and consider the indicated
weight~$2$ blow-up map $\Pi$. In natural coordinates $(u,v,w)$ the map $\Pi$ is given by
$$
\Pi (u,v,w) = (u, vw, w^2) \, ,
$$
where $\{ w=0\}$ is contained in the exceptional divisor.
Now a straightforward computation shows that $\Pi^{\ast} X$ is given in the $(u,v,w)$ coordinates by
\begin{align*}
\Pi^{\ast} X  &=  w^{2k} h \left[ \left( vw  +  w^2f(u,vw,w^2) \right) \frac{\partial}{\partial u}
+  \left(  wg(u,vw,w^2) - \frac{1}{2}vw^2  \right) \frac{\partial}{\partial v} +
\frac{1}{2}w^{2n-1}  \frac{\partial}{\partial w} \right] \\
& =  w^{2k+1} h \left[ (v + wf(u,vw,w^2)) \frac{\partial}{\partial u} +
\left( g(u,vw,w^2) - \frac{1}{2}vw \right) \frac{\partial}{\partial v} +
\frac{1}{2}w^{2n-2} \frac{\partial}{\partial w} \right] \, ,
\end{align*}
where the function $h$ is evaluated at the point $(u,vw,w^2)$. Since $h(0,0,0) \neq 0$, there follows
that the zero-divisor of $\Pi^{\ast} X$ locally coincides with the exceptional divisor (given by $\{ w=0\}$).
Moreover the order of vanishing of $\Pi^{\ast} X$ at the exceptional divisor is $2k+1$.
In turn, the foliation $\Pi^{\ast} \fol$ is represented by the vector field
\begin{equation}
Y = (v + wf(u,vw,w^2)) \frac{\partial}{\partial u} + \left( g(u,vw,w^2) - \frac{1}{2}vw \right) \frac{\partial}{\partial v}
+ \frac{1}{2}w^{2n-2} \frac{\partial}{\partial w} \label{Thevectorfield-Y-justtobequoted}
\end{equation}
whose linear part at the origin is given by $v \partial /\partial u + \lambda u \partial /\partial v$ since
$f(0,0,0) =0$ and $n \geq 2$ (here $\lambda = \partial g (0,0,0) /\partial x \neq 0$).
Thus the eigenvalues of $\fol$ at the origin are~$0$ and the two square roots of $\lambda$ which is clearly
equivalent to having eigenvalues $0$, $1$, and $-1$. The lemma is proved.
\end{proof}

Singularities of foliations on $\C^3$ possessing a single eigenvalue equal to~$0$ are called {\it codimension~$1$}\,
saddle-nodes. Semicomplete vector fields whose associated foliation is a codimension~$1$ saddle-node were studied in
detail in \cite{helena}. However, the case in question where the non-zero eigenvalues belong to the Siegel domain is
not covered in the paper in question. Also, in what follows, we will need more specific results, partially due to
the fact that the corresponding vector field is not necessarily semicomplete (cf. below). Yet, the reader will note
that our argument to prove
Theorem~\ref{teo_semi-complete} overlaps non-trivially with the ideas in \cite{helena}.

Recall that our purpose is to show that $X$ is not semicomplete on a neighborhood of $(0,0,0)$
provided that $k \ne 0$ or $n>2$. Note that, in general, this conclusion does not immediately follow from
proving that the vector field $\Pi^{\ast} X$ {\it is not} semicomplete on a neighborhood of the origin of
the coordinates $(u,v,w)$ since the map $\Pi$ is not one-to-one. It is, in fact, easy to construct examples
of semicomplete vector fields whose transforms under ramified coverings are no longer semicomplete. Yet, in
our context, the situation can be described in a more accurate form. Consider a regular leaf $L$
of the foliation $\Pi^{\ast} \fol$ which is equipped with the time-form $dT_{\Pi^{\ast}X}$ induced
by $\Pi^{\ast} X$. Assume that $c:[0,1] \to L$ is open path over which the integral of
$dT_{\Pi^{\ast}X}$ equals zero so that, in particular, $\Pi^{\ast} X$ is not semicomplete
(Proposition~\ref{prop_timeform}). If $X$
happens to be semicomplete, then we must necessarily have $\Pi (c(0)) = \Pi (c(1))$. Hence, the idea to prove
Theorem~\ref{teo_semi-complete} will be to find open paths $c$ satisfying the following two conditions:
\begin{itemize}
  \item $c : [0,1] \rightarrow L$ is contained in a leaf $L$ of $\Pi^{\ast} \fol$ and verifies $\Pi (c(0)) \neq \Pi (c(1))$;

  \item The integral of the time-form $dT_{\Pi^{\ast}X}$ over $c$ is equal to zero.
\end{itemize}
If $c$ is a path as above, then its projection by $\Pi$ is still an open path contained in
a leaf of $\fol$. Furthermore, the integral of the corresponding time-form {\it induced by $X$}\, over
$\Pi (c)$ is zero so that $X$ cannot be semicomplete.

Before proceeding further, it is convenient to recall the notion
of function asymptotic to a formal series. Let then $t \in \C$ be a variable and considers a formal series
$\psi (t)$. Consider also a circular sector $V$ of angle $\theta$, vertex at $0 \in \C$, and small radius.
A holomorphic function $\psi_V$ defined on $V \setminus \{ 0\}$ is said to be {\it asymptotic (on $V$)}\, to
the formal series $\psi (t)$ if for every~$i \in \N$ and for every sector $W \subset V$, of angle strictly
smaller than $\theta$ and sufficiently small radius, there exists a constant ${\rm Const}_{i,W}$ such that
$$
\Vert  \psi_V (t) -\psi_i (t) \Vert \leq {\rm Const}_{i,W} \;  \Vert t \Vert^{i+1} \, ,
$$
where $\psi_i$ stands for the $i^{\rm th}$-jet of $\psi$ at $0 \in \C$. The adaptation of the above
definition to vector-valued formal series $\psi (t) = (\psi_1 (t) , \ldots ,\psi_n (t))$ and functions
$\psi : V \rightarrow \C^n$ is straightforward and thus left to the reader.

The following lemma appears in \cite{ghys} (Lemma~3.12).

\begin{lemma}\label{asymptotic_vfield}
Let $V \subset \C$ denote a circular sector with vertex at $0 \in \C$ and angle $2\pi/l$,
where $l$ is a strictly positive integer. Assume that $\rho$ is a holomorphic function on $V \setminus \{ 0 \}$
such that
$$
\Vert \rho (x) - x^{l+2} \Vert \leq {\rm Const} \, \Vert x^{l+3} \Vert
$$
for a suitable constant ${\rm Const}$. Then for every $r > 0$, there exists an open path
$c$ embedded in the intersection of $V$ with the disc of radius~$r$ and center at $0 \in \C$
such that the integral of the $1$-form $dx/\rho (x)$ equals zero.
\end{lemma}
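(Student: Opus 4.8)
The plan is to reduce the statement to a non-injectivity property of an antiderivative of the $1$-form $dx/\rho(x)$, and then to analyze that antiderivative by comparison with the model form $dx/x^{l+2}$. First I would observe that the estimate $\Vert \rho(x)-x^{l+2}\Vert \le {\rm Const}\,\Vert x^{l+3}\Vert$ forces $\rho(x)=x^{l+2}(1+O(|x|))$, so $\rho$ is non-vanishing on $V\cap D(0,r_0)$ for some $r_0>0$; since $r$ may be shrunk at will, assume $r\le r_0$. The open sector $V$ with its vertex removed is simply connected, so $1/\rho$, being holomorphic and non-vanishing there, admits a single-valued holomorphic antiderivative $\Phi$ with $\Phi'=1/\rho$. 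For every path $c$ in $V\cap D(0,r)$ one has $\int_c dx/\rho(x)=\Phi(c(1))-\Phi(c(0))$, so it suffices to produce two \emph{distinct} points $p,q\in V\cap D(0,r)$, joined by an embedded arc inside the sector, with $\Phi(p)=\Phi(q)$.

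Next I would compare $\Phi$ with the model antiderivative $\Phi_0(x)=-\tfrac{1}{l+1}\,x^{-(l+1)}$, which satisfies $\Phi_0'=1/x^{l+2}$. From $1/\rho(x)-1/x^{l+2}=O(|x|^{-(l+1)})$ and integration one obtains $\Phi(x)-\Phi_0(x)=O(|x|^{-l})$ (no logarithm arises, since $l\ge 1$ makes the integrand of order $x^{-(l+1)}$ with $l+1\ge 2$), hence $\Phi(x)=\Phi_0(x)\bigl(1+O(|x|)\bigr)$; in particular the perturbation is \emph{relatively} small near the vertex. The essential feature of the model is that $x\mapsto x^{-(l+1)}$ maps the sector $V$, of opening $2\pi/l$, onto an angular range of opening $(l+1)\cdot 2\pi/l=2\pi+2\pi/l$. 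Because this exceeds $2\pi$ — and this is exactly where the hypothesis that the opening equals $2\pi/l$ with $l$ a positive integer is used, the excess being precisely $2\pi/l>0$ — the map $\Phi_0$ is genuinely two-to-one over an open subset of its image: writing $\zeta=\exp(2\pi i/(l+1))$, one has $\Phi_0(x_0)=\Phi_0(\zeta x_0)$, and both $x_0$ and $\zeta x_0$ lie in $V$ as soon as $\arg x_0$ ranges over the sub-interval of length $2\pi/l-2\pi/(l+1)=2\pi/(l(l+1))>0$.

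The final and most delicate step is to promote this double point of the model to a genuine double point of $\Phi$. I would fix $x_0$ with $\arg x_0$ interior to the overlap range and $|x_0|$ as small as desired, set $w_0=\Phi_0(x_0)=\Phi_0(\zeta x_0)$, and apply Rouch\'e's theorem on small disks centred at $x_0$ and at $\zeta x_0$. On a circle of radius $s$ about $x_0$ one has $|\Phi_0-w_0|\gtrsim |\Phi_0'(x_0)|\,s\sim |x_0|^{-(l+2)}s$, while $|\Phi-\Phi_0|\lesssim |x_0|^{-l}$; thus the Rouch\'e inequality $|\Phi-\Phi_0|<|\Phi_0-w_0|$ holds provided $|x_0|^2\ll s\ll |x_0|$, and I would take $s=|x_0|^{3/2}$. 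The main obstacle is precisely this quantitative balance: one must control the size of the perturbation $\Phi-\Phi_0$ against the local scale on which $\Phi_0$ is injective, which is what dictates the admissible range for $s$. Since $\Phi_0-w_0$ has exactly one zero in each such disk, and that zero is simple because $\Phi_0'\neq 0$, Rouch\'e yields a unique solution $p$ of $\Phi=w_0$ near $x_0$ and a unique solution $q$ near $\zeta x_0$.

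Because $|p-q|\approx |x_0|\,|1-\zeta|\gg s$, the points $p$ and $q$ are distinct; for $|x_0|$ small they both lie in $V\cap D(0,r)$, and the two Rouch\'e disks sit inside the sector since $s\ll |x_0|$ whereas the distance from $x_0,\zeta x_0$ to the bounding rays is $\gtrsim |x_0|$. Finally, as $V\cap D(0,r)$ is open and connected, $p$ and $q$ can be joined by an embedded arc $c$ inside it, and then $\int_c dx/\rho(x)=\Phi(q)-\Phi(p)=w_0-w_0=0$, which is the desired conclusion.
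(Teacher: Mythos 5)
Your proposal is correct, and its core idea coincides with the paper's: in the model case $\rho(x)=x^{l+2}$ the primitive $-\tfrac{1}{(l+1)}x^{-(l+1)}$ takes equal values at $x_0$ and $\zeta x_0$ with $\zeta=e^{2\pi i/(l+1)}$, and the arc joining them, of angular width $2\pi/(l+1)<2\pi/l$, fits inside $V$. Where you go beyond the paper is in the perturbation step: the paper merely sketches the model case and then refers to the ``perturbation technique'' of \cite{JR1} for the general $\rho$, whereas you carry this step out in full. Your mechanism --- building the single-valued primitive $\Phi$ on the simply connected set $V\setminus\{0\}$, establishing $\Phi-\Phi_0=O(|x|^{-l})$ against the local scale $|\Phi_0'(x_0)|\,s\sim|x_0|^{-(l+2)}s$ of injectivity of $\Phi_0$, and then applying Rouch\'e on disks of radius $s=|x_0|^{3/2}$ to produce distinct points $p\neq q$ with $\Phi(p)=\Phi(q)$ --- is a clean, self-contained replacement for the citation, and the quantitative window $|x_0|^2\ll s\ll|x_0|$ you identify is exactly the balance the argument needs. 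The only caveat worth recording is the degenerate case $l=1$, where the ``sector of angle $2\pi$'' must be read as a slit disc for $V\setminus\{0\}$ to be simply connected; this is an ambiguity of the statement rather than a flaw in your argument, and in the paper's application the sectors actually have angle $2\pi/l-\varepsilon$.
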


\begin{proof}
It suffices to sketch the argument and refer to \cite{JR1} for the detail concerning the effect of
higher order terms. Consider first the special case where $\rho (x) = x^{l+2}$. In this case the $1$-form
$dx/\rho (x)$ admits the function $x \mapsto -1/(l+1)x^{l+1}$ as primitive. Thus it is enough to choose
a path $c$ of the form $c(t) = x_0 e^{2i\pi t/(l+1)}$ where $x_0$ has small absolute value and is such that
the resulting path $c$ is still contained in $V$.

In the general case, the leading term of $\rho (x)$ is $x^{l+2}$. In fact, for $\Vert x \Vert$ small,
the difference $\Vert \rho (x) - x^{l+2} \Vert$ is bounded by ${\rm Const} \, \Vert x^{l+3} \Vert$ which is of order
larger than $x^{l+2}$ itself. The statement then follows by using the ``perturbation'' technique in \cite{JR1}.
\end{proof}

The proof of Theorem~\ref{teo_semi-complete} is divided in two cases according to whether or not we have
$h(0,0,0) \neq 0$.

\begin{proof}[Proof of Theorem~\ref{teo_semi-complete} when $h(0,0,0) \neq 0$]
With the notation of Lemma~\ref{lemma_ramified_blowup}, consider
the vector field $\Pi^{\ast} X$ and note that $\Pi^{\ast} X = w^{2k+1} h(u,uw,w^2) Y$ where $Y$ is given by
\begin{equation}
Y = (v + wf(u,vw,w^2)) \frac{\partial}{\partial u} + \left( g(u,vw,w^2) -
\frac{1}{2}vw \right) \frac{\partial}{\partial v} + \frac{1}{2}w^{2n-2} \frac{\partial}{\partial w} \, ,
\label{Foliation_afterweigt2_blowup}
\end{equation}
for suitable $k, n, f, g$, and $h$ as above. In particular, the vector field $Y$ is a representative of
the foliation $\Pi^{\ast} \fol$. Fixed a neighborhood
$U$ of the origin, we look for leaves $L$ of $\Pi^{\ast} \fol$ along with open paths
$c: [0,1] \to L$ contained in $U$ such that the two conditions below are satisfied:
\begin{itemize}
\item $\int_c dT_{\Pi^{\ast}X} = 0$;
\item $\Pi(c(0)) \ne \Pi(c(1))$.
\end{itemize}
The existence of the desired paths $c$ will be obtained with the help of a theorem due
to Malmquist in \cite{Malm} ({\it Th\'eor\`eme~1}, page 95) provided that $n \geq 3$ or $k \geq 1$.

To begin we can assume that $\lambda = \partial g (0,0,0) /\partial x =1$, up to a multiplicative constant,
so that the linear part of $Y$ at the origin has eigenvalues~$0$, $1$, and~$-1$. Consider then the linear
change of coordinates $(\bu,\bv,\bw) \mapsto (\bu + \bv, \bu - \bv, \bw)$. The pull-back $\overline{Y}$ of $Y$
in the coordinates $(\bu,\bv,\bw)$ becomes
\begin{equation}
\overline{Y} = \left[ (\bu + \bw A(\bu,\bv,\bw)) \frac{\partial}{\partial \bu} + \left( -\bv + \bw B(\bu, \bv, \bw) +
C(\bu + \bv) \right)
\frac{\partial}{\partial \bv} + \frac{1}{2}\bw^{2n-2} \frac{\partial}{\partial \bw} \right] \, \label{formulaforYbar-usingnow}
\end{equation}
for suitable holomorphic functions $A$ and $B$ of order at least~$1$ and a holomorphic function $C$ of order at least~$2$.
Similarly the vector field $\overline{\Pi^{\ast} X}$ corresponding to the pull-back
of $\Pi^{\ast} X$ in the coordinates $(\bu,\bv,\bw)$ satisfies
$\overline{\Pi^{\ast} X} = \bw^{2k+1} h( \bu + \bv , \bu -\bv, \bw ) \overline{Y}$.

Note that the singularity of the foliation associated with $\overline{Y}$
at the origin is a codimension~$1$ saddle-node, i.e. it has exactly one eigenvalue equal to zero.
In fact, it is a {\it resonant}\, codimension~$1$ saddle-node in the sense that the non-zero eigenvalues, $1$
and $-1$, are resonant. This type of singularity is closely related to a classical result due to Malmquist involving
systems of differential equations with an irregular singular point, cf. \cite{Malm}. We will state a slightly simplified
version of Malmquist results which is adapted to our problem. For $\delta \in \{0,1\}$, assume that
we are given a system of differential equations having the form
\begin{equation}\label{diff_eq_Malm}
\begin{cases}
\bw^{l+1} \frac{d\bu}{d\bw} = s_1 \bu + \beta_1 (\bu, \bv, \bw)\\
\bw^{l+1} \frac{d\bv}{d\bw} = s_2 \bv + \delta \bu + \beta_2 (\bu, \bv, \bw)
\end{cases}
\end{equation}
where $s_1. s_2 \ne 0$ and where $\beta_1, \, \beta_2$ are convergent series
(in particular conditions~(A) and~(B)
of \cite{Malm} are necessarily verified). Now let $\overline{\Psi} (\bw) = (\overline{\psi}_1 (\bw) ,
\overline{\psi}_2(\bw))$ be a formal solution for the system in question.
Malmquist then shows that for every $\varepsilon >0$, there exist circular sectors
of angle $2\pi/k - \varepsilon$ in the space of the $\bw$-variable with respect to which the system~(\ref{diff_eq_Malm})
admits a unique solution which is asymptotic to the formal solution $\overline{\Psi} (\bw)$.

The system~(\ref{diff_eq_Malm}) is naturally related to saddle-nodes singularities as those given by the
vector field $\overline{Y}$. In fact, the vector field $\overline{Y}$ is essentially equivalent to the system
of differential equations
\[
\begin{cases}
\bw^{2n-2} \frac{d\bu}{d\bw} = \bu + \bw A(\bu, \bv, \bw)\\
\bw^{2n-2} \frac{d\bv}{d\bw} = -\bv + \bw B(\bu, \bv, \bw) + C(\bu + \bv) \; .
\end{cases}
\]
Thus we have $s_1=1$, $s_2 = -1$ and $l = 2n-3$ and the formal solution $\overline{\Psi} (\bw)
=(\overline{\psi}_1 (\bw) , \overline{\psi}_2 (\bw))$ is obtained out of the (initial) formal separatrix $S$
(whose formal parameterization is simply $\bw \mapsto (\bw, \overline{\psi}_1 (\bw) , \overline{\psi}_2 (\bw))$).
Since these statements are clearly invariant by change of coordinates, we can return to the variables
$(u,v,w)$ where the vector field $\Pi^{\ast} X$ is defined. Keeping in mind that $w=\bw$, the angle of
the sector $V$ remains unchanged and the formal parameterization of $S$ will simply be denoted
by $\Psi (w) = (w, \psi_1 (w), \psi_2(w))$ (where $\psi_1 = \overline{\psi}_1 + \overline{\psi}_2$
and $\psi_2 = \overline{\psi}_1 - \overline{\psi}_2$).

Fix then an arbitrarily small neighborhood of the origin of the coordinates $(u,v,w)$.
Owing to Malmquist theorem, we can choose a solution of $Y$ asymptotic to the formal
series $\Psi (w) = (w, \psi_1 (w), \psi_2(w))$ of $S$ on the above mentioned sector $V$ (recall that
$w =\bw$). In particular there are points $w_0 \in \C$ with $\Vert w_0 \Vert$ arbitrarily small, and
there are leaves of $\Pi^{\ast} \fol$ to which paths
of the form $c(t) = (0,0, w_0 e^{2\pi i t/(2n-3)})$ can be lifted (with respect to the fibration given
by projection on the $w$-axis). Furthermore these lifted paths are contained
in arbitrarily small neighborhoods of the origin provided that $\Vert w_0 \Vert$ is small enough.
In other words, once a convenient circular sector $V$ of angle $2 \pi/(2n-3)$ is chosen, we can ``parameterize''
an open set of a certain leaf $L$ of $\Pi^{\ast} \fol$ by a map of the form
$w \mapsto (w, \psi_{1,V} (w) ,\psi_{2,V} (w))$, $w \in V$, where the holomorphic functions $\psi_{i,V}$ are
asymptotic on $V$ to the formal series $\psi_i (w)$, $i=1,2$.

The restriction to $L$ of $\Pi^{\ast} X = w^{2k+1} h( u,v,w ) Y$ can be considered in
the $w$-coordinate so as to become identified with a certain one-dimensional vector field
$Z(w) = \rho (w) \partial /\partial w$ defined on $V$. Since $h(0,0,0) \neq 0$ and the formal series
$\psi_i (w)$ have zero linear terms ($S$ is tangent to the $w$-axis),
there follows that $\rho$ has an asymptotic expansion of the form
$$
w^{2n+2k-1} + {\rm h.o.t.}\, \,
$$
up to a multiplicative constant,
where ${\rm h.o.t.}$ stands for terms of order higher than $2n+2k-1$. Since $k \geq 0$ and since
$V$ is a sector of angle $2n-3$, Lemma~\ref{asymptotic_vfield} implies the existence of an open embedded
path $c \subset V$ over which the integral of the time-form associated with $Z(w)$ equals zero. Hence
the vector field $\Pi^{\ast} X$ is {\it never semicomplete} (even if $n=2$ and $k=0$).

What precedes shows that $\Pi^{\ast} X$ is not semicomplete but we still need to show that
the initial vector field $X$ is not semicomplete. It is in this part of the argument that the assumption
$n \geq 3$ unless $k \geq 1$ will play a role. To conclude that $X$ is not semicomplete
we need to consider the possibility of having
$c(0)^2 = c(1)^2$ in the above mentioned path $c \subset V$. If this happens, it means that the difference of
argument between $c(0)$ and $c(1)$ is $\pi$. However, in the preceding discussion (cf. also Lemma~\ref{asymptotic_vfield}),
it was seen that the constructed path $c$ is such that the difference of argument between $c(0)$ and
$c(1)$ can be made arbitrarily close to $2\pi / (2n +2k -2) = \pi/(n+k-1)$. Recalling that $n \geq 2$,
there immediately follows that the desired path $c$ as above satisfying in addition $c(0)^2 \neq c(1)^2$
can be found provided that $n \geq 3$ or $k \geq 1$. Theorem~\ref{teo_semi-complete} is proved.
\end{proof}

Let us now prove Theorem~\ref{teo_semi-complete} in the remaining case.

\begin{proof}[Proof of Theorem~\ref{teo_semi-complete} when $h(0,0,0) = 0$]
Consider the foliation $\fol$ given in~(\ref{normal_form_sc}). We
know that every irreducible component of the set $\{ h=0\}$ is smooth, contains the separatrix
$S$, and {\it is not}\, invariant under $\fol$, cf. Corollary~\ref{Non-invariant-set_of_zeros}.
Whereas $S$ is a formal separatrix for the foliation
$\fol$ (and hence not contained in the singular set of $\fol$), the vector field $X$ vanishes identically
over $S$ since so does $h$. Hence, the argument employed in the previous case is no longer valid since $X$ does not
induce a time-form on $S$ (even if $S$ happens to be convergent). In particular, the existence of
an asymptotic leaf over which $X$ induces a time-form cannot be guaranteed.

To overcome this difficulty, we proceed as follows. To begin, we perform the above indicated blow-up $\Pi$ of weight~$2$
so that $\Pi^{\ast} \fol$ is given by~(\ref{Foliation_afterweigt2_blowup}). Denote
by $\widetilde{\{ h=0\}}$ the transform of $\{ h=0\}$ by $\Pi$ and let $S$ be identified with its own transform by $\Pi$.
The plane $\{ w=0\}$ is invariant under $\fol$ and, in addition,
the restriction of $\fol$ to this plane yields a foliation having a singularity with eigenvalues~$1$ and~$-1$ at
the origin. In particular, it follows that $\fol$ possesses exactly two separatrices, $S_1$ and $S_2$, contained in the plane
$\{ w=0\}$. Furthermore, both $S_1$ and $S_2$ are smooth and mutually transverse. Indeed, they are tangent to
the respective eigenvectors associated with~$1$ and with~$-1$.

\vspace{0.1cm}

\noindent {\it Claim}. The separatrix $S_1$ (resp. $S_2$) is not contained in strict transform of $\{ h=0\}$.

\vspace{0.1cm}

\noindent {\it Proof of the claim}. Without loss of generality we can assume that $h$ is irreducible (otherwise
we apply the argument to each irreducible component of $h$). Thus $\{ h=0\}$ is smooth and contains the formal
separatrix $S$ which is tangent to the $z$-axis. Thus $h$ is given by $h(x,y,z) = ax + by + {\rm h.o.t.}$, where
at least one between $a$ and $b$ is different from zero and where ${\rm h.o.t.}$ stands for higher order terms.
Next, recall that $\Pi (u,v,w) = (u,vw,w^2) = (x,y,z)$. Thus, if $a\neq 0$, then $\widetilde{h} (u,v,w)$ takes on
the form $\widetilde{h} (u,v,w) = au + {\rm h.o.t.}$ in the previous coordinates $(u,v,w)$. Hence, the
surface $\{ \widetilde{h} =0\}$ is tangent to the plane $\{ u=0\}$ at the origin. However, as previously seen,
the separatrices $S_1$ and $S_2$ are contained in $\{ w=0\}$ and tangent to
$\{ u=v\}$ and $\{ u=-v\}$, respectively. Therefore the claim holds provided that $a \neq 0$.

Assume now that $a=0$ so that $b \neq 0$. If $h(x,0,0)$ vanishes identically, then the {\it strict}\, transform
of $\{ h=0\}$ (i.e. ignoring the component associated with the exceptional divisor), is given by a function whose
linear part is $bv$. Thus, now, the corresponding surface is tangent to the plane $\{ v=0\}$ and again cannot contain
the separatrices $S_1$ or $S_2$. Finally, if $\tau (x) = h(x,0,0)$ does not vanish identically, then the intersection
of the mentioned surface with the plane $\{ w=0\}$ is given by $\tau (x) =0$. Once again it cannot contain the separatrices
$S_1$ and $S_2$. The claim is proved.\qed

The remainder of the proof consists of generalizing to the present setting a couple of well known properties
of $2$-dimensional saddle-nodes in the
spirit of \cite{Adolfo}. Consider the vector field $\Pi^{\ast} X = w^{2k+1} \widetilde{h} Y$, where
$Y$ is given by Formula~(\ref{Thevectorfield-Y-justtobequoted}). Owing to the above claim, the vector field $\widetilde{h} Y$
is regular at generic points of the separatrix $S_1$ (recall that $S_1$ is one of the two separatrices of the foliation
$\Pi^{\ast} \fol$ that are contained in the plane $\{ w=0\}$). If $T$ is a local
coordinate for $S_1$ around the origin, then the restriction to $S_1$ of the vector field $\widetilde{h} Y$ can naturally be
identified with a $1$-dimensional vector field of the form $g(T) \partial /\partial T$, where $g$ is a holomorphic function.
Furthermore, since $h(0,0,0) =0$, it follows that $g(0) = g'(0) =0$, i.e. the order at the origin of the restriction of
$\widetilde{h} Y$ to $S_1$ is at least~$2$.

Whereas the vector field $\Pi^{\ast} X = w^{2k+1} [\widetilde{h} Y]$ vanishes identically over $S_1 \subset \{ w=0\}$,
$\Pi^{\ast} X$ does induce an affine structure on $S_1$ (cf. \cite{Adolfo}). In the present case, this affine structure
has a singular point at the origin whose order coincides with the order of the vector field
$\widetilde{h} Y$. In other words, the affine structure in question is given around $0 \in S_1$ by the vector field
$g(T) \partial /\partial T$. This happens because the ``index'' of the separatrix $S_1$ has no effect into the computation of the
{\it ramification index}\, of this affine structure which is a phenomenon reminiscent from the fact that the Camacho-Sad index of
the strong invariant manifold of a saddle-node in dimension~$2$ is always zero (see \cite{camacho}, \cite{Adolfo}).

To effective use the mentioned affine structure, the holonomy map of $S_1$ will also be needed in the sequel. Let $\Sigma$
denote a local transverse section to $S_1$ equipped with coordinates $(\tilde{z},w)$. The corresponding holonomy map
$\sigma$ then fixes $(0,0)$ so that it can be viewed as a map from $(\C^2,0)$ to $(\C^2, 0)$. Now Lemma~\ref{localHolonomy-Generalization}
below asserts that $\sigma$ never coincides with the identity, though its derivative at $(0,0)$ is the identity map.

The preceding two paragraphs can be combined to prove Theorem~\ref{teo_semi-complete} as follows. Consider a loop
$c \subset S_1$ such that $c(0) =c(1) =S_1 \cap \Sigma$. Denote by $\tilde{c}_0$ the lift of $c$ in the leaf $L_0$ through
a point $(\tilde{z}_0, w_0) \in \Sigma$ sufficiently close to $(0,0) \simeq \Sigma \cap S_1$.
For $w_0 \neq 0$, the vector field $\Pi^{\ast} X$ is regular on $L_0$ so that
the corresponding time-form $dT_L$ can be considered. Now, the fact that the affine structure induced by $X$
on $S_1$ has order at least~$2$ at the origin, means that the integral of $dT_L$ over $\tilde{c}_0$ can be assumed to be equal
to zero without loss of generality, up to choosing $(\tilde{z}_0, w_0)$ sufficiently close to $(0,0)$.
In more accurate terms, whereas the mentioned integral may not be equal to zero,
it is always possible to ``slightly perturb'' the end-point of $\tilde{c}_0$ so as to obtain a new path over which
the integral of $dT_L$ is actually zero. Furthemore, since we can also assume that $\sigma (\tilde{z}_0, w_0)\neq (\tilde{z}_0, w_0)$,
the path $\tilde{c}_0$ is {\it open}: the perturbation of the end-point of the lift of $c$ cannot close this path up
since the points $\tilde{c}_0 (0)$ and $\tilde{c}_0 (1)$ are uniformly far apart in the intrinsic distance of $L_0$.

Summarizing the preceding, the integral of the time-form $dT_L$ induced by $\Pi^{\ast} X$ on $L_0$ vanishes
over the open path $\tilde{c}_0$. In particular, $\Pi^{\ast} X$ is not semicomplete. To conclude that the initial vector field
$X$ is not semicomplete as well, it is therefore necessary to check that the projection $\Pi (\tilde{c}_0)$ is still
open. Denote by $(\tilde{z}_1, w_1)$ the end-point in $\Sigma$ of $\tilde{c}_0$ (viewed as the lift of $c$, i.e. before
a possible perturbation of its end-point). Since $\sigma$ is tangent to the identity, it is clear that the argument
of $w_1$ converges to the argument of $w_0$ when $(\tilde{z}_0, w_0)$ converges to $(0,0)$. In particular $w_1 \neq -w_0$
so that $\Pi (\tilde{c}_0)$ is still open. Clearly the same conclusion still holds
after a possible ``perturbation'' of the end-point of $\tilde{c}_0$. This proves that $X$ itself is not semicomplete
and establishes Theorem~\ref{teo_semi-complete}.
\end{proof}

To round off the preceding discussion, we state and prove Lemma~\ref{localHolonomy-Generalization}.

\begin{lemma}
\label{localHolonomy-Generalization}
The local holonomy map $\sigma$ associated with the separatrix $S_1$ of $\Pi^{\ast} \fol$ does not
coincide with the identity. Furthermore it is derivative at the fixed point corresponding to $S_1$ is
the identity.
\end{lemma}

\begin{proof}
Recall that $\Pi^{\ast} (\fol)$ is given in suitable coordinates $(\bu,\bv,\bw)$ by the vector field
$\overline{Y}$ of Formula~(\ref{formulaforYbar-usingnow}). Recall also that $A$ and $B$ are
holomorphic functions of order at least~$1$ while $C$ is holomorphic of order at least~$2$.
In particular, the eigenvalues of $\Pi^{\ast} (\fol)$ at the origin are $1$, $-1$, and~$0$ which implies
that the derivative of $\sigma$ at the fixed point corresponding to $S_1$ is
the identity. The proof then amounts to checking that $\sigma$ cannot coincide with the identity.

Consider the restriction of $\Pi^{\ast} (\fol)$ to the invariant plane $\{ \bw =0\}$ and the corresponding restriction
of $\sigma$. Clearly we can assume that this restriction of $\sigma$ coincides with the identity, otherwise there
is nothing to be proved. In this latter case, however, there follows that the restriction of $\Pi^{\ast} (\fol)$
to $\{ \bw =0\}$ is linearizable after a unpublished result due to Mattei (for published generalizations,
see \cite{Yulij}, \cite{Helena2}). Thus, up to performing a change of coordinates $(u,v,z)$, we can assume that
$\Pi^{\ast} (\fol)$ is given by a vector field of the form
$$
(u+ zF(u,v,z)) \frac{\partial}{\partial u} + (-v + zG(u,v,z)) \frac{\partial}{\partial v} + z^n \frac{\partial}{\partial z}
$$
with $S_1$ given by $\{ u=z=0\}$. Set $v(t) = e^{2\pi i t}$ so that $dv/dt = 2\pi i e^{2\pi i t}$. Since
$$
\frac{du}{dt} = \frac{du}{dv} \frac{dv}{dt} \; \; \; {\rm and} \; \; \; \frac{dz}{dt} = \frac{dz}{dv} \frac{dv}{dt} \, ,
$$
we obtain the following system of equations:
\begin{equation}\label{computing-holonomy-System-11}
\begin{cases}
(-v zG(u,v,z))\frac{du}{dt} = (u+zF(u,v,z) 2\pi i v\\
(-v + zG(u,v,z)) \frac{dz}{dt} = z^n 2\pi i v \, .
\end{cases}
\end{equation}
Now let us apply the standard procedure to compute holonomy maps by means of power series.
First set $F(u,v,z) = \sum_{k,l \geq 0} F_{kl} u^k z^l$ and
$G(u,v,z) = \sum_{k,l \geq 0} G_{kl} u^k z^l$, where the coefficients $F_{kl}$ and $G_{kl}$ are functions of the
variable~$v$. Similarly, let $u = \sum_{i+j \geq 1} a_{ij} (t) u_0^iz_0^j$ and
$z = \sum_{i+j \geq 1} b_{ij}(t) u_0^iz_0^j$. With this notation, the holonomy map $\sigma$ is given by
$$
\sigma (u_0,z_0) = (u(1,u_0,z_0) ,z(1,u_0,z_0)) \, .
$$
Since for $t=0$, the resulting map coincides with the identity, there also follows that
$a_{10}(0) =b_{01}(0) =1$ whereas all the remaining coefficients $a_{ij}$ and $b_{ij}$ are equal to zero.

On the other hand, the following clearly holds:
\begin{equation}\label{computing-holonomy-System-22}
\frac{du}{dt} = \sum_{i+j \geq 1} a_{ij}'(t) u_0^i z_0^j \; \; \; {\rm and} \; \; \;
\frac{dz}{dt} = \sum_{i+j \geq 1} b_{ij}'(t) u_0^i z_0^j \, .
\end{equation}
Substituting the above formula for $dz/dt$ in~(\ref{computing-holonomy-System-11}) and recalling
that $v = e^{2\pi i t}$ yields:
\begin{eqnarray}
2\pi i e^{2\pi i t} \left( \sum_{i+j \geq 1} b_{ij} u_0^iz_0^j \right)^n & = &
\left[ -e^{2\pi i t} + \sum_{k+l \geq 0} G_{kl} (v) (\sum_{i+j \geq 1} a_{ij}u_0^iz_0^j)^k
(\sum_{i+j \geq 1} b_{ij}u_0^iz_0^j)^{l+1} \right] \times \nonumber \\
& & \, \times
\left(\sum_{i+j \geq 1} b_{ij}'(t) u_0^i z_0^j \right) \, . \label{computing-holonomy-System-33}
\end{eqnarray}
Comparing monomials in $u_0z_0$ in Equation~(\ref{computing-holonomy-System-33}), we first note that the
left side of this equation does not contain any monomial $u_0^i z_0^j$, with $i+j <n$, and non-zero coefficient.
From this, it follows that $b_{10}'(t) = b_{01}'(t) =0$ so that they are constant functions of~$t$. In view of the initial
conditions for $t=0$, we conclude that $b_{10} (t)=0$ while $b_{01} (t) =1$, for every $t \in \R$. The evident induction
argument then shows that $b_{i+j} (t)$ is constant equal to zero provided that $2 \leq i+j <n$. Finally, in the case
of $b_{0n}$ we obtain the equation
$$
- e^{2\pi i t} b_{0n}'(t) + \left[ {\rm terms\, \, involving \,\, b_{ij}' \, \, with\, \, i+j \leq n-1} \right]
= 2\pi i e^{2\pi i t} b_{01}^n \, .
$$
Since, for all $t \in \R$, we have that $b_{01} (t) =1$ and $b_{ij}'(t) =0$ provided that $i+j \leq n-1$,
we conclude that $b_{0n}'(t) =-2\pi i$ and hence $b_{0n} (t) =-2\pi i t$ since $b_{0n} (0) =0$. In particular,
$$
b_{0n} (1) = -2 \pi i
$$
so that $\sigma$ does not coincide with the identity. The proof of the lemma is completed.
\end{proof}

Let us close this section with the proof of Theorem~B along with its corollaries.

\begin{proof}[Proof of Theorem~B]
Let $X$ be a semicomplete vector field on $(\C^3 ,0)$ and denote by $\fol$ its associated foliation.
Assume that item~(2) in the statement of Theorem~B does not hold, i.e. that $\fol$ cannot be turned
into a foliation all of whose singular points are elementary by means of blow-ups centered at singular
sets. Thus owing to Theorem~\ref{Microlocalversion_TheoremA}, there is a sequence of one-point blow-ups
starting at the origin which leads
to a transform $\widetilde{\fol}$ of $\fol$ exhibiting a persistent nilpotent singular point $p$ along with a
formal separatrix $S$ of $\widetilde{\fol}$ at $p$. The corresponding transform of $X$ will be denoted
by $\widetilde{X}$.

Next assume aiming at a contradiction that the linear part of $X$ at the origin
is equal to zero. Then the transform
$\Pi_1^{\ast}(X)$ of $X$ under the first blow-up map vanishes identically over the exceptional divisor.
Since the subsequent blow-ups will always be performed at {\it singular points of the foliation}\,
which are contained in the zero-divisor of the vector fields in question, there follows that the zero-divisor
of $\widetilde{X}$ is not empty on a neighborhood of~$p$. Therefore the linear part of $X$ at~$p$
is equal to zero so that Theorem~\ref{teo_semi-complete} implies that $X$ is not semicomplete. The resulting
contradiction proves Theorem~B.
\end{proof}

\begin{obs}
{\rm The preceding proof makes it clear why the first condition in Theorem~\ref{danielJDG}, namely the fact
that each weighted blow up is strictly invariant with respect to the quasi-homogeneous filtration in question,
would be indispensable if Theorem~B were to be proved by means of Theorem~\ref{danielJDG}. If the singularities
of the associated foliation were ``reduced'' by an arbitrary choice of weighted blow ups, it would not be clear
whether or not the transform of
$X$ still is holomorphic with a non-empty
zero-divisor on a neighborhood of a persistent nilpotent singularity~$p$. This issue does not appear in the above
discussion since only standard blow-ups centered in the singular set of the corresponding foliations were used.}
\end{obs}

Corollary~C is an immediate consequence of Theorem~B while Corollary~D requires additional explanation.

\begin{proof}[Proof of Corollary~D]
Strictly speaking, this statement is actually more of a by-product of the proof of Theorem~\ref{teo_semi-complete} than
a corollary of Theorem~B. Consider a compact manifold $M$ and a holomorphic
vector field $X$ defined on $M$. Let $\fol$ denote the singular foliation associated with $X$ and
assume for a contradiction that $\fol$ possesses a singular point $p$ which cannot be resolved by a sequence of blow-ups
as in Theorem~B. Since finite sequences of (one-point) blow-ups as in Theorem~B change neither the compactness of $M$
nor the holomorphic nature of $X$, we can assume that $X$ admits the normal form~(\ref{normal_form_sc}).

Consider then the curve of singular points of $\fol$ locally
given by $\{ y=z=0\}$. Since $M$ is compact this curve of singular points $\mathcal{C}$ is global and compact on $M$.
Furthermore, up to resolving its singular points (as curve), we can assume $\mathcal{C}$
to be smooth. Thus $\mathcal{C}$ can globally be blown-up with weight~$2$ as in Lemma~\ref{lemma_ramified_blowup}.
Again this weighted blow-up is viewed as a two-to-one map as opposed to a birational one. Yet, the resulting
manifold $\widetilde{M}$ is still compact. Similarly the computations in Lemma~\ref{lemma_ramified_blowup} show
that the transform $\Pi^{\ast} X$ of $X$ is still holomorphic. Hence $\Pi^{\ast} X$ is complete
on $\widetilde{M}$. Thus, the restriction of $\Pi^{\ast} X$
to an open set $U \subset \widetilde{M}$ is a semicomplete vector field. A contradiction now
arises from noting that it was seen in the proof of Theorem~\ref{teo_semi-complete} that the vector field
$\Pi^{\ast} X$ is {\it never}\, semicomplete on a neighborhood of the codimension~$2$ saddle-node appearing
in connection with the transform of~$S$. This ends the proof of Corollary~D.
\end{proof}

\section{Examples and complements}\label{Examples_and_comments}

The first part of this section is devoted to detailing a couple of examples respectively related to Theorem~B
and to Theorem~\ref{Microlocalversion_TheoremA}. The remainder of the section will be devoted to a refinement of
Theorem~\ref{teo_semi-complete}
which, of course, can also be used to make Theorem~B slightly more accurate.

\subsection{A couple of examples}
We will first provide an example of complete vector field possessing a persistent nilpotent singular point so as to
substantiate the claim made in the Introduction. Then we will see some
explicit example of persistent nilpotent singularity that cannot be reduced to the examples of Sancho and Sanz.
The argument used here to show that the later example cannot be reduced to those of Sancho and Sanz
is elementary and differs from \cite{danielMcquillan}.

\vspace{0.2cm}

\noindent $\bullet$ {\bf Example}: The vector field
$$
Z = x^2 \partial /\partial x + xz \partial /\partial y + (y -xz) \partial /\partial z \, .
$$

Owing to the discussion in Sections~\ref{multiplicityalongseparatrix}
and~\ref{persistentnilpotent-Section}, it is clear that the foliation $\fol$ associated with $Z$
has a persistent nilpotent singular point at the origin which is associated with the {\it convergent}\,
separatrix $\{ y=z =0\}$. As a matter of fact, the separatrix giving rise
to a sequence of infinitely near (nilpotent) singular points is convergent in this case and, hence,
can be blown-up to resolve the singularity in question. Yet, the reader will note that our notion of
persistent nilpotent singular point only takes into consideration blow-ups centered in the singular
set of foliations so that the preceding observation is of relatively little importance for us.

We will prove that $Z$ can be extended as a complete vector field
on a suitable open manifold $M$. To begin with, note that the coordinate
$x(T)$ satisfies
$$
x(T) = \frac{x_0}{1-Tx_0}
$$
so that $x(T)$ is defined for every $T \neq 1/x_0$. In turn we have $d^2 y/dt^2 = z dx/dt + x dz/dt$
so that the vector field $Z$ yields
$$
\frac{d^2y}{dt^2} = xy = \frac{x_0}{1-Tx_0} y
$$
which has a regular singular point at $T =1/x_0$ and is non-singular otherwise. It then follows from the
classical theory of Frobenius (see for example \cite{Ince}) that $y (T)$ is holomorphic and defined for
all $T \in \C$. Now the vector field $Z$ also gives us
$$
\frac{dz}{dt} = y -xz = -\frac{x_0}{1-Tx_0} z + y(T) \, .
$$
Since $y(T)$ is holomorphic on all of $\C$, there follows that $z(T)$ is holomorphic on all of $\C$
as well.

Summarizing the preceding, the integral curve $\phi (T) = (x(T), y(T) ,z(T))$ of the vector field $Z$
satisfying $\phi (0) = (x_0, y_0, z_0)$ is defined for all $T \in \C \setminus \{ 1/x_0 \}$. Furthermore
as $T \rightarrow 1/x_0$, the coordinate $x(T)$ goes off to infinity while $y(T)$ and $z(T)$ are
holomorphic at $T = 1/x_0$. In particular, the vector field $Z$ is semicomplete on all of $\C^3$.

To show that $Z$ can be extended to a complete vector field on a suitable manifold $M$ is slightly more
involved. Denote by $\fol$ the foliation associated with $Z$ on $\C^3$. Note that the plane $\{ x=0\}$ is
invariant by $\fol$ and that $\fol$ is transverse to the fibers of the projection $\pi_1 (x,y,z) = x$
away from $\{ x=0\}$. The $x$-axis is also invariant by $\fol$ and $\fol$ can be seen as a linear system
over the variable $x$, namely we have $dy/dx = z/x$ and $dz/dx = y/x^2 - z/x$, cf. Chapter~III of \cite{bookYakovenko}.

Let $L$ be a leaf of $\fol$ which is not contained in $\{ x=0\}$. The restriction of
$\pi_1$ to $L$ is a local diffeomorphism from $L$ to the $x$-axis. In view of the
previous discussion, this local diffeomorphism can, in fact,
be used to lift paths contained in $\{ y=z=0 \} \setminus \{(0,0,0)\}$. Similarly,
owing to the description of $\fol$ as a linear system, the parallel transport along leaves of $\fol$
induces linear maps between the fibers of $\pi_1$ (isomorphic to $\C^2$). Finally the holonomy (monodromy)
arising from the invariant $x$-axis coincides with the identity (cf. Lemma~\ref{lemma_holonomy}). Thus we have proved
the following:

\begin{lemma}
\label{vectorfield_Z_linearsystems}
Away from $\{ x=0\}$, the
leaves of $\fol$ are graphs over the punctured $x$-axis. In particular,
the space of these leaves is naturally identified to $\C^2$ with coordinates $(y,z)$.\qed
\end{lemma}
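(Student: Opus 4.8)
The plan is to treat $\fol$ as the linear first-order system in the independent variable $x$ already exhibited above, namely $dy/dx = z/x$ and $dz/dx = y/x^2 - z/x$, and to upgrade the purely local statement (that leaves are locally graphs over the $x$-axis) to a global one over the punctured $x$-axis. First I would restrict attention to a leaf $L$ not contained in the invariant plane $\{x=0\}$; since that plane is invariant, such an $L$ lies entirely in $\{x \ne 0\}$ and its image $\pi_1(L)$ is contained in $\C\setminus\{0\}$. Because the $\partial/\partial x$-component of $Z$ equals $x^2$, the foliation is transverse to the fibers of $\pi_1$ over $\{x\ne 0\}$, so that $\pi_1|_L$ is a local biholomorphism and $L$ is, locally, the graph of a solution of the above system.

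Next I would invoke the fundamental feature of \emph{linear} systems of ordinary differential equations: over the domain $\C\setminus\{0\}$, where the coefficient matrix is holomorphic, the solutions admit no movable singularities and can therefore be analytically continued along every path. Hence, following $L$ by means of $\pi_1$, its projection sweeps out all of $\C\setminus\{0\}$; moreover, again by linearity, the parallel transport of $\fol$ between two fibers $\{x=x_0\}$ and $\{x=x_1\}$ along a path contained in $\C\setminus\{0\}$ is a linear isomorphism between the corresponding copies of $\C^2$. At this stage the only possible obstruction to $L$ being a genuine, single-valued graph over $\C\setminus\{0\}$ is the monodromy produced by continuing a solution once around $x=0$.

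The heart of the matter, and the step I expect to be the main obstacle, is precisely the triviality of this monodromy. It is exactly the holonomy of the invariant punctured $x$-axis acting on a transversal fiber identified with $\C^2$, and by Lemma~\ref{lemma_holonomy} this holonomy coincides with the identity. Consequently, continuation around $x=0$ returns every solution to itself, the solutions are single-valued on $\C\setminus\{0\}$, and $L$ meets each fiber $\{x=c\}$ with $c\ne 0$ in exactly one point; in other words, $L$ is a global graph over the punctured $x$-axis. Everything outside this monodromy computation is standard linear ODE theory, so once Lemma~\ref{lemma_holonomy} is available the argument is routine.

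Finally, to identify the space of leaves, I would fix a base value $x_0\ne 0$ and send each leaf $L\not\subset\{x=0\}$ to its unique intersection point with the fiber $\{x=x_0\}\cong\C^2$. The graph property just established makes this map well defined, while existence and uniqueness of the solution of the linear system through any prescribed initial condition in that fiber make it a bijection; reading off that point in the coordinates $(y,z)$ yields the asserted natural identification of the leaf space with $\C^2$.
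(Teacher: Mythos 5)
Your proposal is correct and follows essentially the same route as the paper: view $\fol$ as the linear system $dy/dx=z/x$, $dz/dx=y/x^2-z/x$ over the punctured $x$-axis, use transversality to $\pi_1$ and the absence of movable singularities to continue solutions along all paths, and reduce the single-valuedness of leaves to the triviality of the monodromy around $x=0$, which is supplied by Lemma~\ref{lemma_holonomy} (applied with $\alpha=0$, $\beta=1$). The identification of the leaf space with a fiber $\C^2$ via the graph property is likewise the paper's argument.
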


The restriction of $Z$ to the invariant plane $\{x =0\}$ being clearly complete, to obtain an extension
of $Z$ as a complete vector field on a suitable open manifold $M$ we proceed as follows. Fix a leaf $L$
of $\fol$ with $L \subset \C^3 \setminus \{ x=0\}$ and denote by $Z_L$
the restriction of $Z$ to $L$. Consider the parameterization
of $L$ having the form $x \mapsto (x, A(x) ,B(x))$ where $x \in \C^{\ast}$ and
where $A$ and $B$ are holomorphic functions.
In the coordinate $x$, the one-dimensional vector field $Z_L$ becomes $x^2 \partial /\partial x$ and thus can
be turned in a complete vector field by adding the ``point at infinity'' to $L$ (i.e. $\{ u=0 \}$ in the
coordinate $u=1/x$). Therefore, to obtain the manifold $M$,
we simply add the ``point at infinity'' to every leaf $L$ of $\fol$ ($L \not\subset \{ x=0\}$). The description of
the leaves of $\fol$ as a linear system and the holomorphic behavior of the functions $y(T), z(T)$ as
$T \rightarrow 1/x_0$ makes it clear the resulting space can be equipped with the structure of a complex manifold $M$.
Moreover $Z$ is naturally complete on $M$ as desired.

\vspace{0.2cm}

\noindent $\bullet$ {\bf Example}: The (germ of) foliation $\fol_{\lambda}$ given by
$$
X_{\lambda} = (y - \lambda z) \frac{\partial}{\partial x} + zx \frac{\partial}{\partial y} +
z^3 \frac{\partial}{\partial z} \, ,
$$
with $\lambda \in \C$.

As mentioned the first examples of persistent nilpotent singularities were supplied
by Sancho and Sanz. It seems, however, interesting to provide an additional
explicit example along with a self-contained proof that it cannot be reduced to the examples of Sancho and Sanz.
For this, we begin by observing that neither the vector field $X_{\lambda}$ nor the
vector fields considered by Sancho and Sanz (see Section~\ref{persistentnilpotent-Section}) are in the normal form
indicated in Theorem~\ref{normalformpersistentnilpotent}, provided that $\lambda \neq 0$. Nonetheless, we have:

\begin{lemma}
\label{existence_formal_separatrix}
The foliation associated to $X_{\lambda}$ possesses a formal separatrix through the origin.
The formal separatrix is, in fact, strictly formal if $\lambda \neq 0$.
\end{lemma}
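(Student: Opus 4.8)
The plan is to produce an explicit formal separatrix tangent to the $z$-axis and then read off its convergence from the growth of its Taylor coefficients. First I would look for an invariant formal curve presented as a graph over the $z$-axis, i.e. a parametrization $z \mapsto \varphi(z) = (x(z), y(z), z)$ with $x(z), y(z) \in z\,\C[[z]]$. Writing $X_\lambda = F\,\partial/\partial x + G\,\partial/\partial y + H\,\partial/\partial z$ with $F = y - \lambda z$, $G = zx$ and $H = z^3$, invariance means that $\varphi'(z) = (x'(z), y'(z), 1)$ is proportional to $(F,G,H)\circ\varphi$. Since the last component $H\circ\varphi = z^3$ is not identically zero, the vanishing of the $2\times 2$ minors built from the third column captures this parallelism without any division, and it is equivalent to the two formal equations
\[
z^3 x'(z) = y(z) - \lambda z \qquad \text{and} \qquad z^2 y'(z) = x(z) .
\]

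Next I would solve these equations coefficientwise. Setting $y(z) = \sum_{k\ge 1} b_k z^k$, the second equation forces $x(z) = \sum_{k\ge 1} k\,b_k z^{k+1}$, and substituting into the first yields the recursion $b_1 = \lambda$, $b_2 = b_3 = 0$, and $b_k = (k-2)(k-3)\,b_{k-3}$ for $k \ge 4$. This recursion has a unique solution, which already establishes the existence of a formal separatrix through the origin: the resulting curve is smooth (the $z$-component is $z$), hence irreducible; it is not contained in the singular locus $\{y=z=0\}$ since $z$ varies along it; and the nondegeneracy condition holds because $H\circ\varphi = z^3 \not\equiv 0$. When $\lambda = 0$ the recursion gives $y \equiv 0$ and therefore $x \equiv 0$, so the separatrix is simply the \emph{convergent} $z$-axis, consistent with the fact that $\{x=y=0\}$ is then invariant.

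Finally, to prove strict formality when $\lambda \neq 0$, I would track the only nonvanishing coefficients, namely those with $k \equiv 1 \pmod 3$. Unwinding the recursion gives $b_{3j+1} = \lambda \prod_{i=1}^{j}(3i-2)(3i-1) = \lambda\,(3j)!/(3^j\, j!)$, which grows factorially in $j$; hence the series $y(z)$, and therefore $\varphi$, has radius of convergence zero and the separatrix is strictly formal. The existence part is an entirely routine formal computation, so the one genuinely decisive step is this last estimate exhibiting the factorial growth of the coefficients, since it is precisely what separates the convergent case $\lambda = 0$ from the strictly formal case $\lambda \neq 0$.
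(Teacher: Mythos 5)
Your argument is correct and follows essentially the same route as the paper: both reduce invariance to the system $z^3 x'=y-\lambda z$, $z^2 y'=x$, solve it coefficientwise to get the recursion $b_{k}=(k-2)(k-3)b_{k-3}$ with $b_1=\lambda$, $b_2=b_3=0$, and conclude divergence for $\lambda\neq 0$ from the factorial growth of $b_{3j+1}$ (the paper via a ratio test, you via the closed form $(3j)!/(3^j j!)$ --- an immaterial difference). Your explicit verification that the resulting formal curve is smooth, not contained in the singular locus, and nondegenerate is a welcome extra that the paper leaves implicit.
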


\begin{proof}
The leaves of the foliation associated with $X_{\lambda}$ can be viewed as the solutions of the following system of
differential equations:
\begin{equation}\label{system_diff_equations}
\begin{cases}
\frac{dx}{dz} = \frac{y - \lambda z}{z^3} \\
\frac{dy}{dz} = \frac{x}{z^2} \, .
\end{cases}
\end{equation}
We look for a formal solution $\varphi(z) = (x(z), y(z))$ of the system~(\ref{system_diff_equations}) in the
form $x(z) = \sum_{k \geq 0} a_k z^k$ and $y(z) = \sum_{k \geq 0} b_k z^k$.
By substituting these expressions in the first equation of~(\ref{system_diff_equations}) and comparing both sides,
we obtain
$$
b_0 = 0 \; , \; \; \; \; \; \; b_1 = \lambda \; , \; \; \; \; \; \; b_2 = 0 \; , \; \; \; \; \; \; {\rm and} \;
\; \; \; \; \; b_{k+3} = (k+1)a_{k+1} \; \; \; \; {\rm for} \; \;\ \; \; k \geq 0 \; .
$$
In turn, substitution and comparison in the second equation of~(\ref{system_diff_equations}) yields
$$
a_0 = a_1 = 0 \; \; \; \; \; \; {\rm and} \; \; \; \; \; \; a_{k+1} = kb_k \; \; \; \; {\rm for} \; \;\ \; \; k \geq 1 \; .
$$
Therefore $b_0 = b_2 =0$, $b_1 = \lambda$, and
$$
b_{k+3} = k(k+1) b_k
$$
for $k \geq 0$. It then follows that the coefficients of $y(z)$ having the form $b_{3l}$ and $b_{3l+2}$ are
zero for all every $l \geq 0$. Furthermore, for $l \geq 1$ we have
$$
b_{3l+1} = \lambda \Pi_{j=1}^l \frac{(3j-1)!}{(3j-3)!} \, .
$$
In the particular case where $\lambda = 0$, the series in question vanishes identically. This means that the curve,
given in coordinates $(x,y,z)$ by $\{x=0, \, y=0\}$ is a convergent separatrix for the foliation $\fol_0$.
Thus we assume from now on that $\lambda \ne 0$.

We want to check that the series $y = y(z) = \sum_{k \geq 0} b_k z^k$ diverges so as to
ensure that $z \mapsto (x(z),y(z),z)$ constitutes a {\it strictly formal} separatrix for $\fol_{\lambda}$. To do this,
just note that the series of $y(z)$ can be reformulated as
$z \sum_{k \geq 0} c_k z^{3k}$,
where $c_0 = 0$ and $c_k = \lambda \Pi_{j=1}^k \frac{(3j-1)!}{(3j-3)!}$. Up to considering the new variable
$w=z^3$, the radius of convergence of this later series is given by
$$
\lim_{k \to \infty} \frac{c_k}{c_{k+1}} = \lim_{k \to \infty} \frac{1}{(3k-1)(3k-3)} = 0
$$
and the lemma follows.
\end{proof}

Summarizing the preceding, the $z$-axis is invariant by $\fol_{\lambda}$ if $\lambda =0$. When
$\lambda \neq 0$, $\fol_{\lambda}$ admits a strictly formal separatrix
$S_{\lambda}$ parameterized by a triplet of formal series
$$
z \to \varphi(z) = (\sum_{k \geq 1} a_k z^k, \sum_{k \geq 1} b_k z^k, z) \, .
$$
Clearly $\varphi'(z) \ne (0,0,0)$ so that $S_{\lambda}$ is formally smooth. In the case $\lambda =0$, the foliation $\fol_0$
satisfies the conditions in Theorem~\ref{normalformpersistentnilpotent} and thus has a persistent
nilpotent singularity at the origin. If $\lambda \neq 0$, we note that $S_{\lambda}$ {\it is not}\, tangent
to the $z$-axis. However, arguing as in Lemma~\ref{lemma_normal_form_higher_tang_order},
there exists a polynomial change of coordinates $H$, of form
$H(\tilde{x}, \tilde{y}, z) = (h_1 (\tilde{x},z), h_2 (\tilde{y}, z), z)$, and such that
the formal separatrix $S_{\lambda}$ becomes tangent (with arbitrarily large tangency order) to
the $z$-axis. This gives the foliation $\fol_{\lambda}$ the normal form indicated in
Theorem~\ref{normalformpersistentnilpotent} and ensures that $\fol_{\lambda}$ gives rise to
a persistent nilpotent singularity with a strictly formal separatrix.

Regardless of whether or not $\lambda=0$, the multiplicity ${\rm mult} \, (\fol_{\lambda},S_{\lambda})$
of $\fol_{\lambda}$ along $S_{\lambda}$ is equal to~$3$. This contrasts with the examples of Sancho and
Sanz where the corresponding multiplicity is always~$2$. Since the multiplicity along a
formal separatrix is clearly invariant
by (formal) change of coordinates, there follows that the singularities $\fol_{\lambda}$ are not
conjugate to the singularities of Sancho and Sanz. Furthermore, as shown in
Sections~\ref{multiplicityalongseparatrix} and~\ref{persistentnilpotent-Section}, the value of
${\rm mult} \, (\fol_{\lambda},S_{\lambda})$ is invariant by blow-ups centered in the singular set of
the corresponding foliations. Therefore the singularities of $\fol_{\lambda}$ cannot give rise to
a singularity in the family
of Sancho and Sanz by means of any finite sequence of blow-ups as above.

\subsection{Local holonomy and semicomplete persistent nilpotent singularities}

To close this section, we turn our attention to semicomplete vector fields once again. Assume
that $X$ is a vector field whose associated foliation $\fol$ possesses a persistent nilpotent
singularity at $(0,0,0) \in \C^3$. Assume also that $X$ is semicomplete. Owing to
Theorem~\ref{teo_semi-complete}, the vector field $X$ has the normal form in
Theorem~\ref{normalformpersistentnilpotent} with $n=2$. In fact, denoting by $S$ the formal separatrix
of $\fol$ giving rise to a sequence of infinitely near nilpotent singularities, we have
${\rm mult} \, (X,S) = {\rm mult} \, (\fol,S) = 2$. These vector fields are thus very close to the
examples of Sancho and Sanz.

This raises the problem of classifying semicomplete vector fields in the Sancho and Sanz family.
In what follows we will conduct this classification only in the special case $\lambda =0$, i.e. when the formal
separatrix $S$ is actually convergent. Our purpose in doing so is to point out the role played by
the holonomy of this separatrix which shares some ideas with the proof of Theorem~\ref{teo_semi-complete} in the
case $h(0,0,0) =0$.
Furthermore, by dealing only with the case of convergent separatrices,
we avoid some technical difficulties that would require a longer discussion: whereas certainly interesting,
this discussion is not really indispensable from the point of view of this paper.
Finally, we also note that the material developed below includes
Lemma~\ref{lemma_holonomy} already used in the study of the vector field
$Z = x^2 \partial /\partial x + xz \partial /\partial y + (y -xz) \partial /\partial z$.

We begin by recalling the context of the proof of Theorem~\ref{teo_semi-complete}.
After performing the weight~$2$ blow-up, we have found an open path $c$ contained in a leaf of the
blown-up foliation $\Pi^{\ast} \fol$ over which the integral of the corresponding time-form
is equal to zero. Whereas this implies that $\Pi^{\ast} X$ is not semicomplete,
we were not able to conclude that $\Pi (c(0)) \neq \Pi (c(1))$ when $n=2$ and $k=0$. Thus, if $n=2$ and $k=0$ then
the possibility of having $X$ semicomplete cannot be ruled out. Let us then consider
this problem for the Sancho and Sanz family with $\lambda =0$, i.e. for the family of vector fields
having the form
$$
X = x^2 \frac{\partial}{\partial x} + (xz - \alpha xy) \frac{\partial}{\partial y} + (y - \beta xz)
\frac{\partial}{\partial z} \, .
$$
We note once and for all that the case $\alpha =0$ and $\beta =1$ correspond to the previously discussed
vector field~$Z$.

Let $V$ be a neighborhood of the origin where $X$ is assumed to be semicomplete. Denote by $S$ the
separatrix of $\fol$ given by the invariant axis $\{y = z = 0\}$. Fix a local transverse section $\Sigma_r$
through a base point $(r,0,0) \in V$. Denote by $L_p$ the leaf of $\fol$ passing through the point
$(r, p)$ with $p \in \Sigma_r$ (with the evident identifications).
If $p$ is close enough to~$(0,0)$, then the closed path $c(t) = (re^{2\pi i t},0,0)$ can be lifted,
with respect to the projection on the $x$-axis, into a path $c_p$ contained in $L_p$.
Furthermore we have
$$
\int_{c_p} dT_L = \int_{c} \frac{dx}{x^2} = 0 \, ,
$$
where $dT_L$ stands for the time-form induced on $L_p$ by $X$.
Thus, the vector field $X$ cannot be semicomplete {\it unless the holonomy map associated with $\fol$ and $S$
coincides with the identity}. Next, we have:

\begin{lemma}\label{lemma_holonomy}
Assume that $X$ and $\fol$ are as above. Then
the holonomy map associated with $\fol$ and $S$ coincides with the identity if and only if
$\alpha, \, \beta \in \Z$ with $\alpha \ne \beta$.
\end{lemma}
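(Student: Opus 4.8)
The plan is to exploit the fact that, away from the invariant plane $\{x=0\}$, the foliation $\fol$ is a linear system in the variable~$x$. Dividing the last two components of $X$ by the first (which equals $x^2$), the leaves of $\fol$ are the graphs of the solutions of
\begin{equation*}
\frac{d}{dx}\binom{y}{z} = A(x)\binom{y}{z}, \qquad A(x)=\begin{pmatrix} -\alpha/x & 1/x \\ 1/x^2 & -\beta/x\end{pmatrix}.
\end{equation*}
The transversal $\Sigma_r$ is a fiber $\{x=r\}$ of the projection on the $x$-axis, and lifting the loop $t\mapsto re^{2\pi i t}$ to the leaves is exactly the analytic continuation of a fundamental matrix of this system around $x=0$. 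Hence the holonomy of $\fol$ along $S$ is the monodromy $M\in GL(2,\C)$ of the linear system above, and the lemma reduces to deciding when $M=\mathrm{Id}$.

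First I would pin down the eigenvalues of $M$. The entries of $A$ are holomorphic on $\C\setminus\{0\}$, the only other singularity being $x=\infty$; since $\pi_1(\p^1\setminus\{0,\infty\})\cong\Z$, the monodromy around $x=0$ is the inverse of the monodromy around $x=\infty$. Setting $u=1/x$ carries the system into $\frac{d}{du}\binom{y}{z}=\big(\frac{B_0}{u}+B_1\big)\binom{y}{z}$ with $B_0=\begin{pmatrix}\alpha & -1\\ 0 & \beta\end{pmatrix}$ and $B_1=\begin{pmatrix}0&0\\-1&0\end{pmatrix}$, so that $x=\infty$ is a Fuchsian point whose residue $B_0$ has eigenvalues $\alpha$ and $\beta$. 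Consequently the eigenvalues of $M$ are $e^{2\pi i\alpha}$ and $e^{2\pi i\beta}$, and a necessary condition for $M=\mathrm{Id}$ is $e^{2\pi i\alpha}=e^{2\pi i\beta}=1$, i.e. $\alpha,\beta\in\Z$.

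It then remains to decide, assuming $\alpha,\beta\in\Z$, when the now unipotent $M$ is actually trivial, equivalently when the singular point carries no logarithmic solution. If $\alpha=\beta$ the two exponents coincide and a logarithm is forced, so $M\neq\mathrm{Id}$; this already shows that $\alpha\neq\beta$ is necessary. When $\alpha\neq\beta$ the exponents are distinct integers differing by $m=|\alpha-\beta|\geq 1$ and the point is resonant: whether a logarithm appears is governed by a single obstruction at step~$m$ of the Frobenius recursion (equivalently, by the solvability of the homological equations $(\mathrm{ad}_{B_0}-k\,\mathrm{Id})P_k=\cdots$, which can only fail at $k=m$). I expect the cleanest route is to eliminate $z$ via $z=xy'+\alpha y$, obtaining a scalar equation for $y$, and to reduce it by $y=\xi^{\alpha+\beta}w$, $\xi=2x^{-1/2}$, to the modified Bessel equation of order $\nu=\alpha-\beta$, whose solutions and their behaviour under $\xi\mapsto e^{-i\pi}\xi$ (the half-turn corresponding to one loop around $x=0$) are completely explicit. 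Tracking the prefactor $\xi^{\alpha+\beta}$ together with the Bessel continuation formulas is then meant to show that both solutions become single-valued in $x$ precisely in the regime $\alpha,\beta\in\Z$, $\alpha\neq\beta$, giving $M=\mathrm{Id}$ and closing the equivalence.

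The main obstacle is exactly this last step. The eigenvalue count only forces $\alpha,\beta\in\Z$ and is blind to the unipotent part of $M$ in the resonant integer regime; the real content is the explicit evaluation of the resonant obstruction (the coefficient multiplying $\log$), distinguishing $\alpha=\beta$ from $\alpha\neq\beta$. This is where the reduction to Bessel's equation, or a careful bookkeeping of the Frobenius recursion attached to the residue $B_0$, carries the argument, all the remaining ingredients being formal generalities about Fuchsian systems.
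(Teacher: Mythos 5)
Your reduction of the holonomy to the monodromy $M$ of the linear system $\frac{d}{dx}(y,z)^{T}=A(x)\,(y,z)^{T}$, and the identification of the eigenvalues of $M$ through the residue $B_0=\left(\begin{smallmatrix}\alpha & -1\\ 0 & \beta\end{smallmatrix}\right)$ at the Fuchsian point $x=\infty$, are correct and settle the necessity of $\alpha,\beta\in\Z$ (for $\alpha=\beta$ you should justify the logarithm by the fact that $B_0$ is a non-diagonalizable Jordan block -- equal exponents alone do not force a logarithm -- but the point is then non-resonant, $M$ is conjugate to $e^{2\pi i B_0}$, and is indeed never the identity). The genuine gap is the step you leave as ``is then meant to show'': for distinct integers $\alpha,\beta$ the resonance obstruction you correctly identify as the real content does \emph{not} vanish. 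Eliminating $z$ at $u=1/x=0$ gives $u^{2}y''-(\alpha+\beta-1)uy'+(\alpha\beta-u)y=0$, with exponents $\alpha,\beta$ and recursion $a_k(\rho+k-\alpha)(\rho+k-\beta)=a_{k-1}$; taking $\rho=\min(\alpha,\beta)$ and $m=|\alpha-\beta|$ this reads $a_k\,k(k-m)=a_{k-1}$, and the obstruction at $k=m$ is $a_{m-1}=a_0/\prod_{k=1}^{m-1}k(k-m)\neq0$. A logarithmic solution is therefore always present and $M$ is a nontrivial unipotent. Your own Bessel picture says the same: for nonzero integer order $\nu=\alpha-\beta$ the second solution is $K_\nu$, which contains $\log\xi$ and picks up $i\pi I_\nu$ under the half-turn $\xi\mapsto e^{-i\pi}\xi$, and no power prefactor $\xi^{\alpha+\beta}$ can cancel a logarithm. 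So the route you propose cannot close the ``if'' direction; carried out honestly it shows the holonomy is never the identity in this family.

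This puts your proposal in direct conflict with the paper's argument, and the conflict is instructive rather than a defect of your framework. The paper parametrizes the loop by $t$, obtains the non-autonomous system $\dot v=A(t)v$ with $A(t)=2\pi i\left(\begin{smallmatrix}-\alpha & 1\\ e^{-2\pi i t} & -\beta\end{smallmatrix}\right)$, and writes its period map as $\exp\bigl(\int_0^1A(s)\,ds\bigr)$. That identity requires the matrices $A(t)$ to commute; here they do not, and the Magnus correction $\tfrac12\int_0^1\!\!\int_0^t[A(t),A(s)]\,ds\,dt$ is a nonzero nilpotent, which is exactly the unipotent part the averaged formula discards. The simplest case $\alpha=0$, $\beta=1$ (the vector field $Z$ of Section~5) already exhibits the issue: the scalar reduction is $uy''=y$ at $u=0$, with exponents $0,1$ and the step-one relation $a_1\cdot1\cdot0=a_0$ forcing $a_0=0$, i.e.\ a logarithmic second solution $\sqrt{u}\,K_1(2\sqrt{u})$. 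In short, your Fuchsian/Frobenius setup is the right instrument for this lemma, but the computation you defer is the whole content, and it comes out with the opposite sign from what the statement requires.
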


\begin{proof}
With the preceding notation, let $c_p(t) = (x(t), y(t), z(t))$ so that $x(t) = r e^{2\pi i t}$. The functions
$y(t)$ and $z(t)$ satisfy the following differential equations:
\begin{align*}
\frac{dy}{dt} &= \frac{dy}{dx} \, \frac{dx}{dt} = \frac{xz - \alpha xy}{x^2} \,  2\pi i x = 2\pi i (z - \alpha y) \\
\frac{dz}{dt} &= \frac{dz}{dx} \, \frac{dx}{dt} = \frac{y - \beta xz}{x^2} \,  2\pi i x = 2\pi i (e^{-2\pi i t}y - \beta z) \, .
\end{align*}
In terms of matrix representations, this system becomes
\[
\left[\begin{array}{c}
          \dot{y}\\
          \dot{z}
          \end{array}
\right] = \left[\begin{array}{cc}
          -2\pi i \alpha & 2\pi i \\
          2\pi i e^{-2\pi i t} & -2\pi i \beta
          \end{array}
\right] \,
\left[\begin{array}{c}
          y\\
          z
          \end{array}
\right] \, .
\]
The solution of this (non-autonomous) system can easily be obtained in terms of the coefficient matrix (denoted by $A(t)$
in the sequel). In particular,
$$
\left[\begin{array}{c}
          y(1)\\
          z(1)
          \end{array}
\right] = e^{\int_0^1 A(s) \, ds}
\left[\begin{array}{c}
          y(0)\\
          z(0)
          \end{array}
\right] \, ,
$$
where
$$
\int_0^1 A(s) \, ds = \left[\begin{array}{cc}
          -2\pi i \alpha  & 2\pi i \\
          0 & -2\pi i \beta
          \end{array}
\right] \, .
$$
Hence the matrix $B = \int_0^1 A(s) \, ds$ has two distinct eigenvalues if and only if $\alpha \ne \beta$. When
$\alpha = \beta$, the matrix $e^B$ has the form
$$
\left[\begin{array}{cc}
          e^{-2\pi i \alpha}  & 2\pi i e^{-2\pi i \alpha} \\
          0 & e^{-2\pi i \alpha}
          \end{array}
\right]
$$
so that the holonomy map is given by $(y,z) \mapsto e^{-2\pi i \alpha}(y + 2\pi i z, z)$ and hence
never coincides with the identity.

Suppose now that $\alpha \ne \beta$. We then have $B = PDP^{-1}$ where
\[
D = \left[\begin{array}{cc}
          -2\pi i \alpha  & 0 \\
          0 & -2\pi i \beta
          \end{array}
\right] \quad \text{and} \quad P = \left[\begin{array}{cc}
          1  & 1 \\
          0 & \alpha - \beta
          \end{array}
\right] \, .
\]
Therefore
\[
e^B = \left[\begin{array}{cc}
          e^{-2\pi i \alpha}  & \frac{1}{\alpha - \beta}\left( e^{-2\pi i \beta} - e^{-2\pi i \alpha} \right) \\
          0 & e^{-2\pi i \beta}
          \end{array}  \right] \, .
\]
This matrix (and thus the holonomy) coincides with the identity if and only if $\alpha, \, \beta \in \Z$. The lemma
follows.
\end{proof}

Lemma~\ref{lemma_holonomy} ensures that $X$ is not semi-complete if $\alpha = \beta$ or if one of the two
parameters $\alpha$ or $\beta$ is not an integer. The converse is provided
by Lemma~\ref{lastlemma_endofpaper} below.

\begin{lemma}
\label{lastlemma_endofpaper}
The vector field $X = x^2 \partial / \partial x + (xz - \alpha xy) \partial / \partial y
+ (y - \beta xz) \partial / \partial z$ is semicomplete for every pair $\alpha$, $\beta$ in $\Z$
with $\alpha \ne \beta$.
\end{lemma}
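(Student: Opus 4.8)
The plan is to exploit the linear-system structure of $\fol$ over the $x$-axis, exactly as in the analysis of the vector field $Z$, the only new ingredient being that Lemma~\ref{lemma_holonomy} now guarantees trivial holonomy. First I would record that, away from the invariant plane $\{x=0\}$, the vector field $X$ has nonvanishing $x$-component $x^2$, so $\fol$ is transverse to the fibers of $\pi_1(x,y,z)=x$ and is described by the linear system $dy/dx = (z-\alpha y)/x$, $dz/dx = y/x^2 - \beta z/x$ over the punctured $x$-axis $\C \setminus \{0\}$. Since $\alpha,\beta\in\Z$ with $\alpha\neq\beta$, Lemma~\ref{lemma_holonomy} shows that the monodromy of this system around $x=0$ is the identity; hence, exactly as in Lemma~\ref{vectorfield_Z_linearsystems}, every leaf $L$ not contained in $\{x=0\}$ is a single-valued graph $x\mapsto (x,y(x),z(x))$ over all of $\C \setminus \{0\}$, with $y$ and $z$ holomorphic on $\C \setminus \{0\}$.

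Next I would reduce the problem to the one-dimensional restriction of $X$ to each such leaf. Along $L$ the $x$-coordinate is a global coordinate and $X|_L = x^2\,\partial /\partial x$, so the dual time-form is $dT_L = dx/x^2$, which admits the primitive $T = 1/x_0 - 1/x$ normalized to vanish at the base point $(x_0,y_0,z_0)$. Thus $T$ identifies $L$ biholomorphically with $\C\setminus\{1/x_0\}$, and the trajectory through $p=(x_0,y_0,z_0)$ is the map $\phi_p(T)=(x(T),y(T),z(T))$ with $x(T)=x_0/(1-Tx_0)$. Because $y$ and $z$ are single-valued holomorphic functions of $x\in\C \setminus \{0\}$ and $T\mapsto x(T)$ is a biholomorphism of $\C\setminus\{1/x_0\}$ onto $\C \setminus \{0\}$, the whole map $\phi_p$ is holomorphic on $\C\setminus\{1/x_0\}$, and its only finite singular time is $T=1/x_0$, where $x\to\infty$.

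From here semicompleteness on a neighborhood $U$ of the origin would follow directly from the definition. Fix such a $U$ and a point $p$, and let $V_p$ be the connected component of $\{T:\phi_p(T)\in U\}$ containing $0$. Any boundary point $T^{\ast}\in\partial V_p$ is either $T^{\ast}=1/x_0$, in which case $x(T)\to\infty$ and $\phi_p$ leaves every compact subset of $U$, or a finite $T^{\ast}\neq 1/x_0$ at which $\phi_p$ is holomorphic, so that $\phi_p(T^{\ast})\in\partial U$ and again $\phi_p(t_i)$ escapes every compact subset of $U$ as $t_i\to T^{\ast}$. For a point $p$ on the invariant plane $\{x=0\}$ the vector field reduces to $y\,\partial /\partial z$, whose flow $T\mapsto(0,y_0,z_0+y_0T)$ is entire, and the same boundary analysis applies verbatim. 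This verifies the required condition at every $p\in U$, proving that $X$ is semicomplete.

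I expect the main obstacle to be the claim in the second step that $\phi_p$ has no finite-time singularity other than at $x=\infty$. A finite-time blow-up of $(y,z)$ occurring while $x$ stays in a compact part of $\C \setminus \{0\}$ would destroy semicompleteness, and this is precisely what is excluded by the trivial holonomy of Lemma~\ref{lemma_holonomy}: a linear system with holomorphic coefficients on $\C \setminus \{0\}$ and trivial monodromy has single-valued solutions, hence solutions that are holomorphic and pole-free throughout $\C \setminus \{0\}$. It is worth noting that, in contrast with the vector field $Z$, for negative $\alpha$ or $\beta$ the functions $y,z$ genuinely blow up as $x\to\infty$; this is harmless here, since $x\to\infty$ already forces the trajectory out of $U$, so one need not build a global complete model as was done for $Z$.
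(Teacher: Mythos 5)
Your proof is correct and follows essentially the same route as the paper's: both reduce the question to the linear system $dy/dx=(z-\alpha y)/x$, $dz/dx=y/x^2-\beta z/x$ over the punctured $x$-axis, invoke Lemma~\ref{lemma_holonomy} to get trivial monodromy and hence single-valued, pole-free solutions on $\C^{\ast}$, and conclude using the explicit time parametrization $x(T)=x_0/(1-Tx_0)$. Your write-up is merely somewhat more explicit than the paper's in verifying the boundary condition in the definition of semicompleteness and in treating the invariant plane $\{x=0\}$.
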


\begin{proof}
The argument is very much similar to the one employed for the vector field $Z$ ($\alpha =0$ and $\beta =1$).
Consider an integral curve $(x(T), y(T), z(T))$ of $X$. Clearly $x(T) = x_0 /(1-x_0T)$ which is a uniform
function on $\C \setminus \{ 1/x_0 \}$ (here we use the word uniform as opposed to multi-valued).
Thus we need to check that $y=y(T)$ and $z=z(T)$ are also uniform
functions of $T$.
This being clear for the integral curves contained in the invariant set $\{x = 0\}$,
consider the remaining orbits of $X$. These remaining orbits, or rather the leaves of the associated
foliation, can locally be parameterized by $x$, i.e. by a map of the form $x \mapsto (x, y(x) ,z(x))$. Since
$x$ is a uniform function of $T$, becomes reduced to showing that $y(x)$ and $z(x)$ are uniform functions
of~$x$. To do this, note that $dy/dx$ and $dz /dx$ are solutions of the linear system
\[
\begin{cases}
\frac{dy}{dx} =  \frac{z}{x} - \alpha \frac{y}{x}  \\
\frac{dz}{dx} =  \frac{y}{x^2} - \beta \frac{z}{x} \, .
\end{cases}
\]
This system has no singularities for $x \neq 0$. Furthermore the parallel transport along leaves gives
rise to linear maps. In particular the holonomy map arising from moving around the point $\{ x=0 \}$
is linear itself. This last map however is the identity thanks to Lemma~\ref{lemma_holonomy}.
The functions $y(x)$ and $z(x)$ are thus uniform functions of $x \in \C^{\ast}$ (for fuller details see
Chapter~III of \cite{bookYakovenko}). The lemma is proved.
\end{proof}

\section{Valuations and the proofs of Proposition~\ref{BasedonCano_Roche_Spivakovsky}
and of Theorem~A}\label{Provingtheproposition}

Most of this section is taken by the proof of Proposition~\ref{BasedonCano_Roche_Spivakovsky}.
As mentioned, this proof relies heavily on the work of Cano-Roche-Spivakovsky \cite{canorochetc}. In turn,
the proof of Theorem~A will follow easily from our previous results combined with Piltant's theorem
\cite{Piltant}.

Let $\fol$ denote a holomorphic foliation defined around $(0,0,0) \in \C^3$.
The singular set of $\fol$ will be denoted by ${\rm Sing}\, (\fol)$. Throughout this section, we only consider
sequences of (standard) blow-ups
\begin{equation}
\fol = \fol_0 \stackrel{\Pi_1}\longleftarrow \fol_1 \stackrel{\Pi_2}\longleftarrow \cdots
\stackrel{\Pi_k}\longleftarrow \fol_k  \, ,\label{aseqofblowups}
\end{equation}
satisfying the following condition: the center of each blow-up map $\Pi_i$ is either a single point in
${\rm Sing}\, (\fol)$ or a smooth analytic curve contained in ${\rm Sing}\, (\fol)$.

From now on, we also assume that $\fol$ is as in Proposition~\ref{BasedonCano_Roche_Spivakovsky}. In other words, no sequence
of blow-ups as in~(\ref{aseqofblowups}) leads to a foliation all of whose singular points are elementary.

We begin by making accurate a standard piece of terminology so as to avoid misunderstandings. Let $\fol$ be as above
and denote by $\nu$ a valuation over $\C$.
In many respects, the authors in \cite{canorochetc} follow the
Zariski approach to the resolution of singularities. A basic idea in Zariski's point of view
consists of trying to simplify the singularities of $\fol$ {\it only at the center of $\nu$},
as opposed to make all of these singularities simpler. To understand the meaning of the previous assertion, consider a blow
up $\pi$ of the ambient manifold where $\fol$ and $\nu$ are defined.
The valuation $\nu$ can be extended (pulled-back by $\pi$) to a valuation on the blown-up
manifold. This new valuation - still denoted by $\nu$ - has its center naturally
contained in the blown-up manifold where the blown-up foliation
$\fol_1$ is also defined. As a first step towards (global) simplification of the singular points of $\fol$, we may
only consider those singularities of $\fol_1$ {\it lying in the
center of the corresponding extension of $\nu$}. As usually happens in the literature, in the sequel
we will abuse notation and refer to the center of the extended
valuation as the center of $\nu$. In other words, whenever a sequence of blow-ups is considered, the
phrase {\it the center of $\nu$} has to be understood as the center of the extended valuation (which will still be
denoted by $\nu$) at each stage of the sequence of blow-ups in question.

In this sense, the so-called ``local'' uniformization (resolution) problem for foliations consists of finding a sequence of blow-ups
as in~(\ref{aseqofblowups})
leading to a foliation $\fol_k$ all of whose singular points {\it lying in the center of $\nu$}\, are elementary.

Concerning the paper \cite{canorochetc}, the first issue that needs to be pointed out is their strategy
to turn local results - in the above sense - into global ones. This strategy relies on Piltant's patching theorem
(see \cite{Piltant}) and its structure is summarized as follows.

Assume that for every given valuation $\nu$, the foliation $\fol$
can be transformed by a sequence of blow-ups as in~(\ref{aseqofblowups}) into a new foliation
$\fol'$ whose singularities {\it lying in the center of $\nu$}\, are all log-elementary (resp. elementary).
Then $\fol$ can also be turned into a foliation $\fol''$ {\it all of whose} singular points are log-elementary (resp. elementary).
This assertion is proved in Part~III of \cite{canorochetc}. The proof, in turn, amounts to checking
that Piltant's axioms \cite{Piltant} are satisfied in this setting. Whereas the authors of \cite{canorochetc}
focus on the case in which the singularities are log-elementary - so that they can deduce their Theorem~2 from
the local uniformization statement provided by their Theorem~1 - the argument is insensitive to whether we deal with
log-elementary or with elementary singular points.

Applying the preceding to a foliation $\fol$ as in the statement of Proposition~\ref{BasedonCano_Roche_Spivakovsky},
there follows the existence of a valuation $\nu$ for which no sequence of blow-ups as in~(\ref{aseqofblowups}) yields
a foliation having only elementary singularities {\it in the center of $\nu$}.
Some simple additional assumptions can be made without loss of generality. These are formulated
as Lemma~\ref{lemma1-1_forProposition} below.

\begin{lemma}
\label{lemma1-1_forProposition}
Assume that $\fol$ is a foliation as in Proposition~\ref{BasedonCano_Roche_Spivakovsky}.
Then there exists a valuation $\nu$ with residual field coinciding with $\C$ such that
for every finite sequence of blow-ups as in~(\ref{aseqofblowups}) the following holds:
\begin{enumerate}
  \item The center of (the corresponding extension of) $\nu$ is always a single point.

  \item The center of $\nu$ is never an elementary singular point for $\fol_k$.
\end{enumerate}
Furthermore the rank of $\nu$ is either~$1$ or~$2$.
\end{lemma}

\begin{proof}
Let $\nu$ be chosen so that it is not possible to turn $\fol$ into a foliation having elementary singularities
in the center of $\nu$ by means of a sequence of blow-ups as in~(\ref{aseqofblowups}). The existence of $\nu$ is
guaranteed by the previous discussions. Now, as
explained in \cite{canorochetc} (Section~9, Part II of \cite{canorochetc}), there is no loss of generality
in assuming that the residual field $k_{\nu}$ of $\nu$ coincides with $\C$. In fact, the remaining cases
are essentially cases in which the ambient manifold is of dimension $2$ which can be handled with minor modifications
of Seidenberg's theorem. In turn, since the residual field $k_{\nu}$ of $\nu$ coincides with $\C$,
there also follows that the center of $\nu$ consists of a single point and this still holds
for all extensions of $\nu$ obtained through blow-ups as above.

Finally, the fact that the rank of $\nu$ must be either~$1$ or~$2$ follows
directly from Proposition~4 in \cite{canorochetc}: the center of
a valuation having rank~$3$ can be turned into an elementary singularity by means of blow-ups as above.
The lemma is proved.
\end{proof}

From now on a valuation $\nu$ as in Lemma~\ref{lemma1-1_forProposition} is assumed to be fixed.

Since $k_{\nu} = \C$, Theorem~3 in
\cite{canorochetc} ensures the existence of a formal power series $\widehat{f}$ having {\it transverse maximal
contact}\, with $\nu$.
Recall that a formal power series $\widehat{f}$ is said to have transverse maximal contact with $\nu$ if
it is a Krull-limit of a sequence of (finite) power series $f_i$ at which $\nu$ takes strictly increasing values
(see below for a more geometric interpretation of this condition).
Naturally, standard desingularization of
surfaces implies that the formal surface $\widehat{W}$ is smooth at the center of
$\nu$. However, a more accurate result in proven in \cite{canorochetc}. Namely,
up to finitely many blow-ups (some ``preparation''), the formal series $\widehat{f}$ admits one of the following normal forms
(where $(x,y,z)$ is a regular system of parameters)
\begin{equation}
\widehat{f}  =  z + \sum_{i,j} c_{i,j} x^i y^j \; \; \; \; {\rm or} \; \; \; \;
\widehat{f} =  z + \sum_{i} c_{i} x^i \, . \label{normal_forms_Infinitesurface}
\end{equation}
The first case occurs when the rank of the valuation is~$2$ and, in this case, the variables
$x,y$ are such that $\nu (x)$ and $\nu (y)$ are $\Z$-linearly independent. If the rank of $\nu$
is~$1$, then we have the second case where $x$ is such that $\nu (x) \neq 0$ and $\widehat{f}$ does
not depend on the variable~$y$.

We can now make an important reduction in the statement of Proposition~\ref{BasedonCano_Roche_Spivakovsky}.

\begin{lemma}
\label{lemma2-2_forProposition}
Without loss of generality, we can assume that $\widehat{W}$ is (formally) invariant under $\fol = \fol_0$.
\end{lemma}

To prove Lemma~\ref{lemma2-2_forProposition}, let us begin by reminding the reader that the center
of $\nu$ (in the space where $\fol$ is defined) consists of a single point which can be assumed to coincide
with the origin of some local coordinates. We also choose a holomorphic vector field
\begin{equation}
X = F \partial /\partial x + G \partial /\partial y + H \partial /\partial z \, \label{vectorfield-X-1_Appendix}
\end{equation}
representing $\fol$ on a neighborhood of the center of $\nu$ (identified with the origin).

Fixed the power series $\widehat{f}$, the {\it basis of $\widehat{f}$}\, consists of those points $q$ at which
$\widehat{f}$ naturally defines a formal series. Clearly, the origin lies in the basis of $\widehat{f}$ but the basis of $\widehat{f}$
may or may not contain other points. For example, if $\widehat{f}  = z + \sum_{i} c_{i} x^i$,
then $\widehat{f}$ can be considered at every point belonging to the $y$-axis. This is, however, not necessarily true for
$\widehat{f} = z + \sum_{i,j} c_{i,j} x^i y^j$.

Let us now consider the {\it (formal) tangency locus}\, ${\rm Tang}\, (\fol, \widehat{W})$
between $\widehat{W}$ and $\fol = \fol_0$ {\it based at the origin}.
The tangency locus ${\rm Tang}\, (\fol, \widehat{W})$ (based at the origin) refers
to the formal equation
\begin{equation}
d \widehat{f} . X = \frac{\partial \widehat{f}}{\partial x} F + \frac{\partial \widehat{f}}{\partial y} G
+ \frac{\partial \widehat{f}}{\partial z} H = 0 \; . \label{Definition_tangencyLocus}
\end{equation}
A formal curve $t \mapsto (\gamma_1 (t) , \gamma_2 (t), \gamma_3 (t))$ based at the origin
is said to be contained in ${\rm Tang}\, (\fol, \widehat{W})$ if it satisfies the formal equation~(\ref{Definition_tangencyLocus}).
By definition, the origin also  belongs to ${\rm Tang}\, (\fol, \widehat{W})$ (recall that our definition of formal curve
requires at least one of the $\gamma_i$ not to vanish identically).

Similar considerations can be made at any point $q$ in the basis of $\widehat{f}$ provided that
the vector $X(q)$ belongs to the formal tangent space to $\widehat{W}$ at~$q$. This leads to the notion of tangency locus
between $\widehat{W}$ and $\fol = \fol_0$ {\it based at~$q$}. In the sequel, whenever the basis point is clear from the
context, we will simply say the tangency locus ${\rm Tang}\, (\fol, \widehat{W})$ between $\widehat{W}$ and $\fol$ without
further comments.

Assume now that $\widehat{W}$ {\it is not}\, formally invariant by $\fol$. Since $\fol$ has a singular point at the
origin, there follows the existence of a formal curve $S$ contained  in ${\rm Tang}\, (\fol, \widehat{W})$.
Naturally the formal curve $S$ must be viewed as given by a formal map
$t \mapsto (\gamma_1 (t) , \gamma_2 (t), \gamma_3 (t))$, with $\gamma_1(0) = \gamma_2 (0) = \gamma_3 (0) =0$ where
at least one of the power series $\gamma_1, \, \gamma_2$, and $\gamma_3$ does not vanish identically.
The proof of the existence of $S$ is a straightforward computation which, in fact, is the same as the standard result for the case
of an analytic surface: just note that monomials of degree, say~$d$, in the series of $\gamma$ depend only on a finite part of the formal
series of $\widehat{f}$.

Next recall the geometric interpretation of of the fact that $\widehat{W} = \{ \widehat{f} =0 \}$ has
transverse maximal contact with $\nu$. For this, consider the formal series $\widehat{f}$ and a blow-up map
$\Pi$ obtained as a composition of finitely many blow-up maps as in~(\ref{aseqofblowups}). The transform
of $\widehat{f}$ under $\Pi$ is nothing but the composition $\widehat{f} \circ \Pi$. Naturally this transform
does not make sense as formal
series at a generic point of the exceptional divisor associated with $\Pi$. However, this transform does make sense
at the center of $\nu$ provided that $\widehat{f}$ and $\nu$ have transverse maximal contact. Indeed, by definition,
$\widehat{f}$ is the Krull-limit of finite series $f_i$ over which $\nu$ takes strictly increasing values.
The transforms of the series $f_i$ are well defined (these series are finite) and they are still convergent for the
Krull-topology at the center of (the extension of) $\nu$. Their Krull-limit at the center of (the extension of)
$\nu$ then defines a formal series which naturally provides the transform of $\widehat{f}$ at the center of $\nu$.
This observation
is a fundamental issue that allows us to consider the transform under $\Pi$ of the formal surface $\widehat{W}$ as a formal surface
``passing through the center of $\nu$''. It also encodes the geometric interpretation of the condition of
having transverse maximal contact and is often abridged by saying that the ``formal surface
$\widehat{W}$ keeps passing through the center of $\nu$'' for every sequence of blow-ups as in~(\ref{aseqofblowups}).
This terminology will also be used in the remainder of our discussion.

Now fix a formal curve $S$ as above which is contained in ${\rm Tang}\, (\fol, \widehat{W})$. Up
to performing finitely many blow-ups, $S$ can be assumed to be formally smooth. Assume that $S$ {\it is not}\, contained
in the singular set of $\fol$. Then we
claim that for every sequence of blow-ups as in~(\ref{aseqofblowups}), the transform of the curve
$S$ passes through the center of the corresponding extension of $\nu$.
To check our claim, we proceed as follows. First note that the transform of
$\widehat{W}$ passes through the center of the extension of $\nu$ due to the transverse maximal contact assumption.
Since the transform of $\fol$ is singular at the point in question, the preceding discussion ensures the existence
of a (branch of) formal tangency curve $\widetilde{S}$ between
the transforms of $\fol$ and of $\widehat{W}$ stemming from the center of (extension of) $\nu$.
Now note that $S$ is contained in the center of a blow-up belonging to a reduction procedure as
in~(\ref{aseqofblowups}) because these centers are contained in the singular set of the corresponding foliations,
unlike the curve $S$ (by assumption). Since
a sequence of blow-ups, starting from $p=p_0$, as in~(\ref{aseqofblowups})
cannot produce new tangency points between the transforms
of $\fol$ and of $\widehat{W}$, we must conclude that $\widetilde{S}$ is the transform of
$S$. In particular, the curve $S$ satisfies the conditions in Proposition~\ref{BasedonCano_Roche_Spivakovsky}
unless {\it $S$ is fully constituted by singular points of $\fol$}.

\begin{obs}
\label{TangencyCurves-and-centerofvaluations}
{\rm It is convenient to point out a fact already implicit in the paragraph above. If
$\Pi$ is a blow-up map as in~(\ref{aseqofblowups}) which {\it is not}\, centered at~$S$, then the center of the corresponding
extension of $\nu$ is determined by the transform of the curve $S$. Namely, the transform of $S$ defines a single
point in the exceptional divisor associated with $\pi$ and this point is the center of the extended valuation.}
\end{obs}

\begin{proof}[Proof of Lemma~\ref{lemma2-2_forProposition}]
Owing to what precedes, we just need to consider the case in which $S$ is smooth and entirely constituted by
singular points of $\fol$. Therefore $S$ is actually an analytic curve contained in the
singular set of $\fol$ and hence can also be used as center for a blow-up map.

Up to performing finitely many (one-point) blow-ups, we can choose coordinates $(x,y,z)$ around
the center of $\nu$, identified with $(0,0,0)$, such that the following holds:
\begin{enumerate}
  \item The exceptional divisor is locally given by $\{ z=0\}$.
  \item The curve $S$ coincides with the $z$-axis.
  \item In view of the normal forms~(\ref{normal_forms_Infinitesurface}), we can also assume that
  $\widehat{W}$ is given either by $x = \sum_{i,j} c_{i,j} y^i z^j$ (with $c_{i,0} =0$ for every~$i$)
  or by $x = \sum_i c_i y^i$.
\end{enumerate}
Note that the curve $\gamma \subset \{ z=0 \}$ determined by $\{z=0\} \cap \widehat{W}$ may or may not be contained in
the tangency locus of $\widehat{W}$ and $\fol$.

\vspace{0.1cm}

\noindent {\it Claim 1}. We can assume that $\gamma$ is not contained in the tangency locus of $\widehat{W}$ and $\fol$.

\noindent {\it Proof of Claim 1}. Note that the above described situation is invariant under (one-point) blow-ups
at the center of $\nu$ (in turn determined by the intersection of the transform of $S$ with the exceptional divisor,
cf. Remark~\ref{TangencyCurves-and-centerofvaluations}).

Now assume that $\gamma$ is contained in the tangency locus of $\widehat{W}$ and $\fol$. In particular
$\gamma$ is invariant by $\fol$ (here it is included the possibility of having $\gamma$ contained in the singular set
of $\fol$). Next, let $X$ be a local vector field
representing $\fol$ on a neighborhood of $(0,0,0) \in \C^3$ (identified with the center of $\nu$) and consider its first non-zero
homogeneous component $X_n$ of $X$ at $(0,0,0)$. The tangent vector to $\gamma$ at $(0,0,0)$ is clearly invariant by $X_n$
since $\gamma$ is invariant under $\fol$. The same applies to the vector $(0,0,1)$ since $\fol$ is singular all along
the $z$-axis (identified with $S$). Therefore the plan spanned by $(0,0,1)$ and the tangent vector to $\gamma$ at $(0,0,0)$
is invariant by $X_n$. Naturally the plane in question is nothing but the tangent space to the surface $\widehat{W}$, since
$\widehat{W}$ is smooth.

Summarizing what precedes, whenever $\gamma$ is contained in the tangency locus of $\widehat{W}$ and $\fol$, there follows
that the tangent space to $\widehat{W}$ is invariant under the first non-zero
homogeneous component of a vector field representing $\fol$ around the center of $\nu$.
However, as previously pointed out, we can apply to this situation any sequence of
one-point blow-ups at the center of $\nu$. Since $\widehat{W}$ is not invariant under $\fol$,
we can find a suitable sequence such that the tangent space to the transform of $\widehat{W}$ is no longer
invariant by the first non-zero homogeneous component of
a local vector field representing the corresponding transform of $\fol$. Therefore the corresponding curve $\gamma$
will not be contained in the tangency locus of the corresponding transforms of $\fol$ and $\widehat{W}$. The claim
is therefore proved.\qed

We now go back to the initial local coordinates $(x,y,z)$.
Owing to Claim~1, we can assume that the tangency locus of $\widehat{W}$ and $\fol$ on a neighborhood of $(0,0,0)$
is reduced to the above defined curve~$S$ (locally coinciding with the axis~$z$). Recalling that $S$ is smooth and
contained in the singular set of $\fol$, we will perform blow-ups centered at~$S$. First, we fix a holomorphic
vector field $X$ as in~(\ref{vectorfield-X-1_Appendix}) which represents $\fol$ around the origin. Since
$S$ is contained in the singular set of $\fol$, there follows that $H$ vanishes identically over the $z$-axis.
Furthermore, the component in the direction $\partial /\partial z$ of the transform of $X$ under a blow-up
centered at $S \simeq \{ x=y=0\}$ is simply the transform of the function $H$ under the blow-up in question.
In other words, as long as this type of cylindrical blow-up is performed, the singular set ${\rm Sing}\, (\fol_1)$
of the resulting blown-up foliation $\fol_1$ is contained in the transform of the surface $\{ H =0\}$.

Since blow-ups centered at~$S$ will be performed, we need to extend the content of Remark~\ref{TangencyCurves-and-centerofvaluations}
to this type of blow-up. Indeed, if $\Pi$ denotes the blow-up centered at~$S$, then
the center of (the extension of) $\nu$ is determined by the fact that it lies in the intersection of
$\Pi^{-1} (0,0,0)$ with the transform
of $\widehat{W}$. More precisely the formal curve $\gamma$ obtained by intersecting $\widehat{W}$ and the plane $\{ z=0\}$
determines a point $p_1$ in $\Pi^{-1} (S) \cap \{ z=0\}$ (where, by abusing notation, $\{ z=0\}$ stands for both
the initial plane $\{z=0\}$ and its transform under $\Pi$).
This point $p_1$ is the (new) center of $\nu$.

At $p_1$, let $S_1$ denote the tangency locus between the blown-up foliation $\fol_1$ and the (transform of)
$\widehat{W}$. As previously seen, this tangency locus is well defined. Furthermore this tangency
locus must contain a formal curve $S_1$ since $p_1$ has to be a singular point of $\fol_1$. In the present situation,
$S_1$ is clearly smooth, contained in the exceptional divisor $\Pi_1^{-1} (S)$, and transverse to $\{ z=0\}$.
If $S_1$ is not contained in ${\rm Sing}\, (\fol_1)$, then $S_1$ is a formal separatrix satisfying the
conditions of Proposition~\ref{BasedonCano_Roche_Spivakovsky} and thus there is nothing else to be proved. Therefore
$S_1$ can be assumed to be contained ${\rm Sing}\, (\fol_1)$. As previously seen, this implies, in particular,
that $S_1$ is contained in the transform of the surface $\{ H =0\}$.

The procedure above can now be repeated with center $S_1$ (which is an analytic curve contained in ${\rm Sing}\, (\fol_1)$).
We have:

\vspace{0.1cm}

\noindent {\it Claim 2}. If $\{ H=0\}$ is not invariant, then we can assume that $S_1$ is not contained in
${\rm Sing}\, (\fol_1)$.

\noindent {\it Proof of Claim 2}. For every generic point $z_0 \in S$, the intersection of $\Pi_1^{-1} (z_0)$
and the transform of $\{ H=0\}$ consists of a bounded number of points. Indeed, the context is essentially equivalent to
the $2$-dimensional one: in particular Seidenberg theorem would lead to elementary singular points sitting over
{\it generic points}\, of the $z$-axis. Thus, by iterating sufficiently many times this type of blow-ups, the generic point
of the intersection between the transform of $\{ H=0\}$ and the (cylindrical) exceptional divisor will have to be regular
for the corresponding blown-up foliation $\fol_k$ (since $\{ H=0\}$ is not invariant by $\fol$).
At this point, the tangency locus of $\fol_k$ and the corresponding
transform of $\widehat{W}$ yields the desired separatrix proving Proposition~\ref{BasedonCano_Roche_Spivakovsky}.
This establishes Claim~2.\qed

Thanks to Claim~2, we can assume that the surface $\{ H=0\}$ is invariant by $\fol$. The rest of the proof consists of
proving that $\{ H=0\}$ must have transverse maximal contact with $\nu$, so that it will suffice to deal with invariant
surfaces having transverse maximal contact with $\nu$. This is, however, clear by now. In fact, for all blow-ups
centered in the curves $S_i$ (as above), $\{ H=0\}$ will have to pass through the center of the valuation. The other two
possible types of blow-ups are as follows:
\begin{itemize}
  \item One-point blow-ups of the center of $\nu$. Again, the new center of $\nu$ will be determined by the transform of
  $S$ (locally given by the $z$-axis). As seen in Sections~2 and~3, in the corresponding coordinates $(u,v,z) \mapsto (uz,vz,z)$,
  the component of $X$ in the direction $\partial /\partial z$ transforms like the function $H$. Thus we obtain
  the desired separatrix proving Proposition~\ref{BasedonCano_Roche_Spivakovsky} unless $\{ H=0\}$ continues to pass
  through the center of the valuation.

  \item Blow-ups centered at smooth curves (contained in $\{ z=0\}$) and passing through the origin (identified with the center
  of $\nu$). Once again, in suitable coordinates, the component of $X$ in the direction $\partial /\partial z$ transforms
  like the function $H$.
\end{itemize}
Summarizing what precedes, we obtain the desired separatrix unless the (analytic) invariant surface given by
$\{ H=0\}$ passes through the center of the valuation for every sequence of blow-up maps as in~(\ref{aseqofblowups}).
This, however, implies that $\{ H=0\}$ has transverse maximal contact with $\nu$ and completes the proof of the lemma.
\end{proof}

The remainder of this appendix is devoted to proving Proposition~\ref{BasedonCano_Roche_Spivakovsky} in the case where
the formal surface $\widehat{W}$ with transverse maximal contact with $\nu$ is, in addition, invariant under
$\fol$. This case is very close to the $2$-dimensional situation considered in Seidenberg's theorem, as it will be
detailed in what follows.

We consider local coordinates around the center of $\nu$ (identified with $(0,0,0) \in \C^3$)
along with the formal surface $\widehat{W} = \{ \widehat{f} =0 \}$.
Since the formal surface $\widehat{W}$ is smooth, there is a formal change of coordinates in which $\widehat{W}$
becomes identified with a coordinate plane. Naturally, in these formal coordinates, the foliation becomes only formal.
In other words, the vector field $X$ of~(\ref{vectorfield-X-1_Appendix}) representing $\fol$ around the origin
(i.e. the center of $\nu$) becomes only {\it formal}. This, however, is a minor issue since for resolution problems
there is essentially no difference between working with a formal vector field or with an actual holomorphic one.

We can then consider local coordinates
$(x,y,z)$ where the surface $\widehat{W}$ coincides with the plane $\{ z=0\}$ at the expenses of consider
$X$ as a formal vector field. In particular, in the coordinates $(x,y,z)$ the (formal) vector field $X$
takes on the form~(\ref{vectorfield-X-1_Appendix}), where $F$, $G$, and $H$ are formal series with $H$ being divisible by~$z$.

\begin{obs}
\label{dontthingisgonnabequoted}
{\rm The reader will note that the choice of formal coordinates $(x,y,z)$ where the surface $\widehat{W}$ becomes identified with the
plane $\{ z=0\}$ is basically a convenient way to abridge notation. Indeed, we can directly work with the initial coordinates
and with the formal generator $\widehat{f}$ of the surface $\widehat{W}$ but this would make the notation slightly
cumbersome.

The use of formal coordinates as above actually helps to make the argument more transparent since, in most of the discussion, there is
no difference between dealing with formal or holomorphic vector fields. Along this direction, we will occasionally allow ourselves
to argue as if $X$ is a holomorphic vector field: this will only be done, however, when the general procedure is straightforward
enough to avoid confusion.}
\end{obs}

Recalling that Seidenberg's theorem applies equally well to formal vector fields, we can consider the case of
the restriction of $X$ to the invariant plane $\{ z=0\}$. Applying Seidenberg's
theorem in the present context, however, requires us to distinguish between the restriction of a $3$-dimensional foliation
to an invariant plane and the foliation {\it on the invariant plane induced by the restriction of the mentioned foliation}.
In other words, in dimension~$2$, singularities are always
isolated: if the coordinate functions of a vector field have a common factor, then this factor can be eliminated as far as the
underlying foliation is concerned. This is no longer true if we are looking at vector fields defined on a $3$-dimensional
ambient. In more accurate terms, if $X$ is as in~(\ref{vectorfield-X-1_Appendix}), the functions $F(x,y,0)$ and $G(x,y,0)$ may
have a non-trivial common factor that does not divide, for example, $H$. This gives rise to a curve of singular points of
$X$ contained in the plane $\{ z=0\}$ which, indeed, constitutes a curve of singularities for the foliation
in dimension~$3$. However, if we look at the foliation induced by restriction of the previous one to the plane
$\{ z=0\}$, then the resulting $2$-dimensional foliation can be extended as a regular foliation to all but finitely
many points in the curve in question. Lemma~\ref{lemma3-3_forProposition} below makes these comments accurate.

\begin{lemma}
\label{lemma3-3_forProposition}
Without loss of generality we can assume that the (formal)
vector field $X$ representing the foliation on a neighborhood of $(0,0,0)$ (identified with the center of $\nu$) has
the following form:
\begin{equation}
X = x^ny^m (f(x,y) \partial /\partial x + g(x,y) \partial /\partial y) + z (r (x,y,z) \partial /\partial x +
s (x,y,z) \partial /\partial y) + z h \partial /\partial z \, , \label{vectorfield-X-2_Appendix}
\end{equation}
where $h$ is a formal series in $x$, $y$, and $z$. Furthermore, the following holds:
\begin{itemize}
  \item The vector field $Y = f(x,y) \partial /\partial x + g(x,y) \partial /\partial y$ - viewed as a $2$-dimensional
  vector field on the plane $\{ z=0\}$ - either is regular or has an
  (isolated) elementary singular point at $(0,0)$.

  \item Both $m$ and $n$ are nonnegative integers. In addition, if $m >0$ (resp. $n >0$), then the axis $\{ x=0\}$
  (resp. $\{ y=0 \}$) is invariant by $Y$.
\end{itemize}
\mbox{}\qed
\end{lemma}

Considering the normal form~(\ref{vectorfield-X-2_Appendix}), it is clear that at least one between $m$ and $n$ must
be strictly positive otherwise the origin is an elementary singular point of $X$. Similarly, $h (0,0,0)$ must be
equal to zero otherwise $X$ has a non-zero eigenvalue in the direction $\partial /\partial z$.

Next there is no loss of generality in assuming that the functions $r$ and $s$ are divisible by $x^ny^m$.
Indeed, let $\Pi$ denote the blow-up centered at the $x$-axis and consider coordinates $(x,y,v)$ where $\Pi$
becomes $\Pi (x,y,v) = (x,y,yv)$. In this case, the transform of $\{ z=0\}$ coincides with the plane $\{ v=0\}$ while the
transform of $X$ becomes
\begin{eqnarray}
\Pi^{\ast} X & = &
x^ny^m Y + yv \left[ r (x,y,yv) \partial /\partial x +
s (x,y,yv) \partial /\partial y \right] + \label{Furthernormalizations_forProposition-1} \\
& & + \,  v \left[ -x^ny^{m-1} g(x,y) - v s (x,y,yv) + h(x,y,yv) \right] \partial /\partial v \, . \nonumber
\end{eqnarray}
Thus the ``new'' functions $r$ and $s$ have, in particular, acquired a factor of~$y$. Hence, by iterating blow-ups as above
centered either at the $x$-axis or at the $y$-axis the claim follows.

Formula~(\ref{Furthernormalizations_forProposition-1}) also yields:

\begin{lemma}
\label{lemma4-4_forProposition}
Without loss of generality, the function $h$ admits the decomposition $h = h^{(0)} (x,y) + z h^{(z)} (x,y,z)$
where $h^{(z)}$ is divisible by $x^ny^m$.
\end{lemma}

\begin{proof}
Again it follows from formula~(\ref{Furthernormalizations_forProposition-1})
that every factor of~$z$ in $h$ acquires a factor of~$y$. Similarly, every
new function ``$s$'' has an additional factor of~$y$ (as previously seen). The lemma then follows by repeating
the indicated procedure.
\end{proof}

\begin{obs}
\label{mightcometobequoted}
{\rm Clearly the preceding shows that $r$, $s$, and $h^{(z)}$ can be assumed to be divisible by $x^ay^b$, for every a priori
given $a,b \in \N$.

Whereas it will not really be necessary in what follows, we may also note
that the function $(x,y) \mapsto x^ny^{m-1} g(x,y)$ may be assumed to be
divisible by~$x^ny^m$. Indeed, if $(0,0)$ is a regular point for $Y$ then we can assume that $g$ vanishes identically.
Otherwise $(0,0)$ is an elementary (irreducible) singularity of~$Y$ and, since we are working with formal vector fields,
there is no loss of generality in assuming that $g$ is divisible by~$y$ in this case.}
\end{obs}

In the sequel we can also assume that $h$, and hence $h^{(0)}$, is not divisible by either~$x$ or~$y$, otherwise
we can reduce at least one between~$m$ and $n$. Now
let $P$ denote the (homogeneous) polynomial obtained as the first non-zero homogeneous component of the formal
series of $h^{(0)}$. The degree of $P$ will be denoted by $d \geq 1$. Therefore the polynomial $P$ has the form
$$
P = \sum_{i=0}^d c_i x^i y^{d-i} \, .
$$
Hence the set $\{ P =0 \}$ consists of $k$ straight lines $C_1, \ldots, C_k$ through $(0,0) \in \C^2$, with $k \leq d$.
In fact, if we add multiplicity to each one of the lines $C_j$, then we will have $k=d$.
The union $C_1 \cup \ldots \cup C_k$ of the mentioned lines naturally forms the tangent cone to the set $\{ h^{(0)} =0 \}$
viewed as contained in the plane $\{ z=0\}$ (otherwise the cone in question is simply the ``cylinder'' over
$C_1 \cup \ldots \cup C_k$). Clearly the set $\{ h^{(0)} =0 \} \cap \{ z=0\}$ is just a finite number of irreducible (possibly singular)
analytic curves.

\begin{proof}[Proof of Proposition~\ref{BasedonCano_Roche_Spivakovsky}]
We keep the preceding notation. Since $r$, $s$, and $h^{(z)}$ are divisible by $x^ny^m$ and since $d \geq 1$, there follows
that the $z$-axis is contained in the singular set of $\fol$. This axis can thus be used as center for a blow-up.

Here it is convenient to point out a simple issue concerning a blow-up $\Pi$ centered at the $z$-axis and the corresponding
center of $\nu$. In contrast with the previous cases, the center of (the extension of) $\nu$ is not immediately
detected in the present situation. Clearly, the new center of $\nu$ is contained in the rational curve
$\Pi^{-1} (0,0,0)$ but this curve is entirely contained in the transform of $\widehat{W} \simeq \{ z=0\}$ so that,
a priori, any point in $\Pi^{-1} (0,0,0)$ can be the center of $\nu$. In the sequel, by abuse notation, $\{ z=0\}$ will
denote both the initial coordinate plane and its transform under $\Pi$.

Recall that $X$ as in~(\ref{vectorfield-X-2_Appendix}) represents $\fol$ around $(0,0,0)$. In turn, the transform
of $x^ny^m[ (f + z r) \partial /\partial x  + (g + zs) \partial /\partial y]$ by $\Pi$ vanishes
over the exceptional divisor to the order~$m+n$. Similarly the transform under $\Pi$ of
$h^{(0)} (x,y) \partial /\partial z$ (resp. $z h^{(z)} (x,y,z) \partial /\partial z$) vanishes over the exceptional
divisor to the order~$d$ (resp. $m+n$ but actually arbitrarily larger if it were necessary). Hence,
the transform of $X$ under $\Pi$ vanishes over the exceptional divisor with order $\min \{ d, m+n \}$.
To make the subsequent discussion clearer, it is convenient to first consider two cases:

\noindent {\it Case 1}. Assume that $d > m+n$.

In this case, after eliminating the common power of the generator of the ideal associated with the exceptional divisor,
we see that the transform $\fol_1$ of the foliation $\fol$ is regular at $\Pi^{-1} (0,0,0) \cap \{ z=0\}$ except at the
two points of $\Pi^{-1} (0,0,0) \cap \{ z=0\}$ determined by the axes $x$ and $y$ (both singular for $\fol$). In
particular, the center of $\nu$ has to be one of these two points. Furthermore, since this center has not become an elementary
singular point, the procedure can be continued up to applying again the construction used in
Lemma~\ref{lemma4-4_forProposition} to make sure that
the corresponding (new) functions $r$, $s$, and $h^{(z)}$ are as indicated.

When continuing this procedure, note that the degree of the new polynomial ``$P$'' will be strictly smaller than~$d$
provided that $k \geq 2$. We will return to this point in the more general discussion below.

\noindent {\it Case 2}. Assume that $d \leq m+n$. This is the more interesting case. The foliation $\fol_1$ is represented
by a vector field $X_1$ whose component in the direction $\partial /\partial z$ has the form
$$
z[\widetilde{P} + z  \widetilde{h}^{(z)}] \partial /\partial z
$$
where $\widetilde{P}$ (resp. $ \widetilde{h}^{(z)}$) is the transform of $P$ (resp. $h^{(z)}$) after eliminating
the above mentioned common factor arising from the exceptional divisor. In the case of $\widetilde{P}$, there follows
in particular that $\widetilde{P}$ - viewed as a polynomial of variables $x,y$ on the plane $\{ z=0\}$ - vanishes
exactly of the transforms of the lines $C_1, \ldots , C_k$. In particular, the center of $\nu$ must be coincide with one
of the points in $\Pi^{-1} (0,0,0) \cap \{ z=0\}$ determined by the lines in question.

Up to applying the technique in the proof of Lemma~\ref{lemma4-4_forProposition} at every stage, a sequence of blow-ups
as above can be performed. The outcome of this sequence of blow-ups, which also summarizes the preceding two cases,
is the following claim:

\noindent {\it Claim}. Up to performing the indicated blow-up $\Pi$ and considering the corresponding
transform ($\fol_1$) of $\fol$ under $\Pi$, we can assume without loss of generality that the vector field
$X$ representing $\fol$ as in~(\ref{vectorfield-X-2_Appendix}) satisfies one of the following conditions:
\begin{itemize}
  \item if $n$ and $m$ are strictly positive. Then $k=1$ (and $h, h^{(0)}$ are not divisible by either $x$ or $y$).
  \item if $n=0$, then $m >0$ and $k \in \{ 1, 2\}$. However, if $k=2$, then $h$ is divisible by~$x$ which is therefore
  naturally associated with one of the lines $C_1$, $C_2$. Finally, again $h, h^{(0)}$ are not divisible by~$y$.
\end{itemize}
\mbox{}\qed

The above considered blow-up procedure actually leads to a slightly more accurate situation. Recalling that
$h$ does not vanish over the corresponding transforms of the (initial) invariant axes, we will arrive to one of the
following two situations:
\begin{enumerate}
\item the tangent cone of $h^{(0)}$ in the plane $\{ z=0\}$
does not pass through the points in $\Pi^{-1} (0,0,0) \cap \{ z=0\}$ determined by the invariant axes of the vector field~$Y$.

\item Consider the points in $\Pi^{-1} (0,0,0) \cap \{ z=0\}$ which are determined by  by the invariant axes of the vector field~$Y$.
Then the only component of the tangent cone of $h^{(0)}$ passing through these points
coincides with the curve $\Pi^{-1} (0,0,0) \cap \{ z=0\}$.
\end{enumerate}

In the first situation, $\fol$ has elementary singular points in $\Pi^{-1} (0,0,0) \cap \{ z=0\}$ except at the point $p$
determined by the intersection of $\Pi^{-1} (0,0,0) \cap \{ z=0\}$. In this case the point $p$ is regular for the transform of
the vector field~$Y$. Thus, there are local coordinates (still denoted by $(x,y,z)$) around $p$, with
$\{ y =z=0 \} \subset \Pi^{-1} (0,0,0) \cap \{ z=0\}$ where $X$ becomes:

\vspace{0.1cm}

\noindent {\it Case}\, ({\it a})
\begin{equation}
X = y^m \partial /\partial x + z (r \partial /\partial x + s \partial /\partial y) + z(h^{(0)} + z h^{(z)})
\partial /\partial z \, . \label{FormFinal-Casecalled-a}
\end{equation}
Moreover the cone tangent to $h$ (or to $h^{0)}$) coincides with the $y$-axis.

In the second situation, we can eliminate the curve $\Pi^{-1} (0,0,0) \cap \{ z=0\}$ from the singular
set of $x^n y^m Y$. Thus, $X$ becomes:

\vspace{0.1cm}

\noindent {\it Case}\, ({\it b})
\begin{equation}
X = x^n [ (f + z r) \partial /\partial x  + (g + zs) \partial /\partial y] + z(h^{(0)} + z h^{(z)})
\partial /\partial z \, . \label{FormFinal-Casecalled-b}
\end{equation}
Moreover $h^{(0)}$ vanishes only at $\Pi^{-1} (0,0,0) \cap \{ z=0\}$, i.e. $h = y^l h_1$ where $h_1 (0,0,0) \neq 0$ and $l \geq 1$.

The remainder of the proof amounts to showing how to handle each case.

Assume first that {\it Case}\, ({\it a}) happens. We consider a blow-up $\Pi$ centered at the $z$-axis which,
after the discussion revolving around Lemma~\ref{lemma4-4_forProposition}, is constituted by singular points of $X$.
In particular, the function $h$ viewed as the component of $X$ in the direction $\partial /\partial z$ is transformed
as function. Hence the blown-up vector field $X_1$ will therefore vanish identically over the cylindrical exceptional divisor. Indeed,
the blow-up of $y^m \partial /\partial x + z (r \partial /\partial x + s \partial /\partial y)$ will vanish with
order $m-1$ over the exceptional divisor whereas the zero-set of the transform of $z(h^{(0)} + z h^{(z)})
\partial /\partial z$ will consist of the union of the exceptional divisor with the {\it strict}\, transform of $h=0$. Letting
$k$ denote the minimum of the vanishing orders of these two vector fields over the exceptional divisor, the blown-up
foliation $\fol_1$ is induced by the vector field $X_1$ divided by the $k^{\rm th}$-power of the generator of the ideal associated
with the exceptional divisor.

Now, we must have $k < m-1$ (strictly) since, otherwise, all singularities of $\fol_1$ lying in
$\Pi^{-1} (0,0,0) \cap \{ z=0\}$ will have a non-zero eigenvalue associated with directions contained in $\{ z=0\}$.

Thus $k < m-1$, and after dividing $X_1$ by the $k^{\rm th}$-power of the generator of the exceptional divisor, we see
that the component of the resulting vector field in the direction $\partial /\partial z$ vanishes only on the strict transform
of $\{ h=0\}$. Thus every point in $\Pi^{-1} (0,0,0) \cap \{ z=0\}$ is either regular or elementary for the foliation
$\fol_1$ except the point determined by the strict transform of $h$. Denoting by $p_1$ the point of $\Pi^{-1} (0,0,0) \cap \{ z=0\}$
in question, we first note that $p_1$ must coincide with the (new) center of $\nu$. Moreover,
the structure of the blown-up foliation $\fol_1$ around $p_1$ is again as in {\it Case}\, ({\it a}). The integer
$m$, however, has decreased strictly. Therefore, after finitely many blow-ups as above, the center of $\nu$ will be turned either
in a regular point or in an elementary singularity for the corresponding foliation. In any event, this gives a contradiction
showing that this situation cannot happen.

Finally, assume now that {\it Case}\, ({\it b}) happens. Recall that $h=y^l h_1$ with $h_1 (0,0,0) \neq 0$.
Consider again the blow-up $\Pi$ centered at the $z$-axis and note that we will be able to divide
the transform of $X$ by the generator of the exceptional divisor to the power $k = \min \{ m,l \}$.

Assume that $m\geq l$. Then the blown-up foliation $\fol_1$ will have elementary singular points (with a non-zero
eigenvalue in the direction $\partial /\partial z$) over the entire curve $\Pi^{-1} (0,0,0) \cap \{ z=0\}$
except at the point determined by the $x$-axis ($\{ y=0\}$). Around this point, we have again the situation
described in {\it Case}\, ({\it b}) except that the value of~$m$ is replaced by $(m-l)$ and, therefore, has reduced
strictly.

Conversely, if $l > m$, then $h$ will still vanish over the curve $\Pi^{-1} (0,0,0) \cap \{ z=0\}$ (and, in fact,
over the cylindrical exceptional divisor). However, the transform of the vector field
$(f + z r) \partial /\partial x  + (g + zs) \partial /\partial y$ will provide us with regular or elementary singular
points over $\Pi^{-1} (0,0,0) \cap \{ z=0\}$, except at the point $q$ determined by the $y$-axis ($\{ x=0\}$).
Note that, around $q$, the tangent cone of $\{ h=0\}$ is reduced to $\Pi^{-1} (0,0,0) \cap \{ z=0\}$ since the
transform of its initial component given by $\{ y=0\}$ intersects $\Pi^{-1} (0,0,0) \cap \{ z=0\}$ at a point
different from~$q$. Thus, we have again the situation
described in {\it Case}\, ({\it b}) except that, now, the value of~$l$ is replaced by $(l-m)$.

Hence, at every blow-up as above, at least one between $m$ and $l$ becomes strictly smaller. Once one of them becomes
zero, we obtain an elementary singular point at the center of $\nu$ which of course is impossible. The
proof Proposition~\ref{BasedonCano_Roche_Spivakovsky} is now completed.
\end{proof}

The reader will note that the proof of Proposition~\ref{BasedonCano_Roche_Spivakovsky} also shows that the
formal surface $\widehat{W}$ with transverse maximal contact with $\nu$ {\it cannot be invariant by $\fol$}. Indeed, the
preceding discussion shows that singularities in the center of $\nu$ - a valuation with transverse maximal
contact with $\widehat{W}$ - can be turned into elementary ones provided that $\widehat{W}$ is invariant.
This observation can be reformulated as follows:

\begin{corol}
\label{Non-invariant-set_of_zeros}
Let $\fol$ be as in Proposition~\ref{BasedonCano_Roche_Spivakovsky} and consider a formal separatrix
$S$ giving rise to a sequence of infinitely near non-elementary singular points. Then $S$ cannot be contained
in a formal surface invariant under $\fol$.
\end{corol}

\begin{proof}
Clearly a formal surface containing $S$ will always pass through the center of (extended) valuation associated with~$S$
so that the statement follows from the previous discussion.
\end{proof}

Note that Corollary~\ref{Non-invariant-set_of_zeros} is hardly surprising since it is, in fact,
very much in line with the main result of \cite{CanoRoche}.

Now we close this paper with the proof of Theorem~A.

\begin{proof}[Proof of Theorem A]
Let $\fol$ be a foliation on $(\C^3,0)$ and fix a valuation $\nu$. Assume that no sequence of blow ups as
in~(\ref{aseqofblowups}) transforms $\fol$ in a foliation whose singular points contained in the center of
(the extension of) $\nu$ are all elementary. Then $\nu$ can be assumed to satisfy the conditions of
Lemma~\ref{lemma1-1_forProposition}. In turn, there follows from Proposition~\ref{BasedonCano_Roche_Spivakovsky} and
from Theorem~\ref{Microlocalversion_TheoremA} that $\fol$ can be turned in a foliation possessing a persistent
nilpotent singularity at the center of $\nu$. Thus we have improved the Local uniformization theorem
(Theorem~1) of \cite{canorochetc} to the following statement: the foliation $\fol$ can be transformed in
a foliation whose singularities at the center of $\nu$ either are elementary or are persistent nilpotent
singular points.

Next the blow up procedure we have used all along our construction clearly satisfies the same ``naturality'' conditions
satisfied by the procedure in \cite{canorochetc}. Hence this procedure verifies Piltant's axioms in \cite{Piltant},
cf. pages~256-257 of \cite{canorochetc}. Thus we obtain the following global result:

\noindent {\it Claim}: Every foliation $\fol$ on $(\C^3,0)$ can be transformed by a sequence of blow ups
as in~(\ref{aseqofblowups}) into a foliation $\fol_k$ whose singular points either are elementary
or are persistent nilpotent singularities.\qed

To complete the proof of Theorem~A we proceed as follows. Note that the set formed by the
persistent nilpotent singular points of $\fol_k$ consists of isolated points thanks to Theorem~\ref{normalformpersistentnilpotent}.
Up to reducing the neighborhood of $(0,0,0) \in \C^3$ under consideration, this set is therefore
finite. Finally each of these (finitely many) singular points can be turned into an elementary singular
point by means of a blow up with weight~$2$, cf. Lemma~\ref{lemma_ramified_blowup}. Theorem~A is proved.
\end{proof}

\vspace{0.1cm}

\begin{flushleft}
{\sc Julio Rebelo} \\
Institut de Math\'ematiques de Toulouse\\
118 Route de Narbonne\\
F-31062 Toulouse, FRANCE.\\
rebelo@math.univ-toulouse.fr

\end{flushleft}

\vspace{0.1cm}

\begin{flushleft}
{\sc Helena Reis} \\
Centro de Matem\'atica da Universidade do Porto, \\
Faculdade de Economia da Universidade do Porto, \\
Portugal\\
hreis@fep.up.pt \\

\end{flushleft}

\end{document}